\documentclass[12pt,letterpaper]{amsart}

\usepackage{euscript,amsfonts,amssymb,amsmath,amscd,amsthm,enumerate,hyperref}

\usepackage{tikz,bm,float}
\usetikzlibrary{calc}
\usetikzlibrary{patterns}
\colorlet{lgray}{white!85!black}
\colorlet{lred}{white!75!red}

\usepackage{comment}

\usepackage{graphicx}

\usepackage{color}

\usepackage[margin=0.9in]{geometry}
\newtheorem{theorem}{Theorem} 
\newtheorem*{theorem*}{Theorem}
\newtheorem{lemma}[theorem]{Lemma}
\newtheorem{definition}[theorem]{Definition}
\newtheorem{proposition}[theorem]{Proposition}

\theoremstyle{remark}
\newtheorem{remark}[theorem]{Remark}

\numberwithin{equation}{section} \numberwithin{theorem}{section}

\newcommand{\la}{\lambda}

\newcommand{\N}{\mathbb N}

\newcommand{\R}{\mathbb R}

\newcommand{\C}{\mathbb C}

\newcommand{\Z}{\mathbb Z}

\newcommand{\Y}{\mathbb Y}

\newcommand{\Q}{\mathbb Q}

\newcommand{\ii}{{\mathbf i}}

\newcommand{\eps}{\varepsilon}


\newcommand{\p}{\mathbf \Pi}

\newcommand{\mm}{\mathbb{MM}}

\newcommand{\Sig}{\mathrm{Sig}}

\usepackage{MnSymbol}

\usepackage{skak}

\usepackage{adjustbox}
\usetikzlibrary{arrows}
\usetikzlibrary{decorations.pathreplacing}



\title[Hall-Littlewood RSK field]
{Hall-Littlewood RSK field}

\author{Alexey Bufetov}

\address[Alexey Bufetov]{Department of Mathematics, Massachusetts Institute of Technology, Cambridge, MA, USA. E-mail: alexey.bufetov@gmail.com}

\author{Konstantin Matveev}

\address[Konstantin Matveev]{ Department of Mathematics, Brandeis University, Waltham, MA, USA. E-mail: kosmatveev@gmail.com}

\begin{document}

\maketitle

\begin{abstract}
We introduce a randomized Hall-Littlewood RSK algorithm and study its combinatorial and probabilistic properties. On the probabilistic side, a new model --- the Hall-Littlewood RSK field --- is introduced. Its various degenerations contain known objects (the stochastic six vertex model, the asymmetric simple exclusion process) as well as a variety of new ones. We provide formulas for a rich class of observables of these models, extending existing results about Macdonald processes. On the combinatorial side, we establish analogs of properties of the classical RSK algorithm: invertibility, symmetry, and a ``bijectivization'' of the skew-Cauchy identity.
\end{abstract}

\section{Introduction}

\subsection{Overview}

This paper is motivated by two directions of recent developments. First, in the last twenty years there was a lot of progress in applications of the technique of symmetric functions to probability. The frameworks of Schur processes \cite{O}, \cite{OR}, and Macdonald processes \cite{BC} have proved useful in numerous stochastic models. However, it seems that some important connections still remain to be discovered. For example, only very recently it was realized that the asymmetric stochastic exclusion process (ASEP) falls into the formalism of Macdonald processes.

Second, there is a purely combinatorial motivation for generalizing the classical Robinson-Schensted-Knuth (RSK) algorithm to Macdonald symmetric functions (RSK itself is related to Schur symmetric functions). Perhaps, the main obstacle is that the generalization of RSK has to be randomized, and one needs to find suitable analogs for properties of the classical RSK algorithm.

We develop both these directions for the Hall-Littlewood symmetric functions. The main results of this paper are:

\begin{itemize}

\item We introduce a Hall-Littlewood RSK algorithm (Definition \ref{def:RSK-main1}), and prove its combinatorial properties: invertibility (Proposition \ref{lemma:flip}), symmetry (\ref{prop:symmetry}), Markov evolution of first columns (\ref{prop:Markov-proj}). We show that it provides a bijective proof of the skew-Cauchy identity for Hall-Littlewood functions (Section \ref{sec:bij-proof}).

\item We introduce new integrable probabilistic models: the Hall-Littlewood RSK field (Section \ref{sec:field}), a multi-layer stochastic vertex model (Sections \ref{sec:2vertMod} and Section \ref{sec:multi-layer-vertMod}), and a multi-layer ASEP (Sections \ref{sec:2asep} and \ref{sec:multi-asep}). The last two naturally generalize a stochastic six vertex model and ASEP, respectively. We give a new proof to the main result of \cite{BBW} which claims that the distribution of the height function of the stochastic six vertex model is equal to the distribution of the first column of Young diagrams distributed according to a Hall-Littlewood process (Section \ref{sec:st6v}). The results of \cite{BBW} have already lead to new applications for ASEP (\cite{CD}), and we hope that more applications will be available with the use of results of this paper.

\item Extending the results of \cite{BCGS}, we prove formulas for observables of a general Hall-Littlewood process (Theorem \ref{prop:gen-HL-proc}). This general result provides formulas for observables of the Hall-Littlewood RSK field, a multi-layer stochastic vertex model, and a multi-layer ASEP. In particular, they provide formulas for observables for a stochastic six vertex model and ASEP (similar formulas for these objects are known: \cite{TW1}, \cite{TW2}, \cite{BCG}, but their derivation with the use of Hall-Littlewood processes is new). We also provide formulas for mixed q-Whittaker / Hall-Littlewood measures (Proposition \ref{prop:qWHLandHL}).
\end{itemize}

Section \ref{sec:2} contains results about observables of Macdonald processes. In Section \ref{sec:RSK} we deal with a Hall-Littlewood RSK algoritm. Section \ref{sec:degen} deals with integrable models by combining the results of Sections \ref{sec:2} and \ref{sec:RSK}.
Note that the material of Section \ref{sec:RSK} is independent of Sections \ref{sec:2} and \ref{sec:degen}; a reader interested in combinatorics only might read Section \ref{sec:RSK} alone.

In the rest of the introduction we discuss our results in more detail.

\subsection{Observables of Macdonald processes}

Macdonald functions (see \cite[Section 6]{M}) are an important family of symmetric functions which depend on parameters $q,t$; in probabilistic applications one typically assumes $0 \le q, t < 1$.
Macdonald processes (as introduced in \cite{BC}) are probability measures on sequences of Young diagrams determined by products of (skew) Macdonald functions. The main tool for an analysis of this process is the following: The application of Macdonald difference operators allows to obtain exact formulas for certain observables of the process, which can subsequently be analyzed in asymptotic regimes. Starting from \cite{BC}, this idea has found many applications in the last five years, see e.g. \cite{BP1}, \cite{Cor}, and references therein.

While this scheme produced many rich results in the cases of q-Whittaker ($t=0$; \cite{BC}, \cite{BP1}, \cite{Cor}) and Jack ($q=t^{\alpha}$ and the limit transition $q,t \to 1$; \cite{BG}) functions, the application of it to the case of Hall-Littlewood functions ($q=0$) was restricted by dealing with the first column of Young diagrams only (\cite{D}, \cite{CD}). The reason for this is simple --- the Macdonald difference operators do not provide convenient observables for other columns.

Section \ref{sec:2} fills this gap. We provide (Theorem \ref{prop:gen-HL-proc}) observables of Hall-Littlewood processes which are as suitable for an asymptotic analysis as observables for q-Whittaker processes. We do this by combining the general formalism of \cite{BCGS} with the application of the Macdonald involution operator. The derivation is easy, yet it seems to be an important part of the framework of Macdonald processes which is missing in the current literature.

The same idea allows to obtain formulas for observables of the mixed q-Whittaker / Hall-Littlewood measure (see Proposition \ref{prop:qWHLandHL}). We believe that these formulas will find their applications as well.

\subsection{Hall-Littlewood RSK algorithm}

The classical RSK algorithm (see e.g. \cite[Chapter 7]{Sta}) is a profound combinatorial object which is closely related to properties of Schur (Macdonald parameters $q=t$) symmetric functions. One way to formulate the connection is to say that the RSK algorithm provides a bijective proof of a skew Cauchy identity for Schur functions. Also the RSK algorithm enjoys many other nice combinatorial features.

It is natural to ask whether the algorithm can be naturally generalized to Macdonald functions beyond the Schur case. The answer is non trivial: It turns out that the algorithm has to become random.

The idea of the randomization of the RSK algorithm is by no means new. For the Robinson-Schensted algorithm (an important particular case of the RSK algorithm) the natural randomized versions were constructed in \cite{OCP}, \cite{BP3} for the q-Whittaker functions and in \cite{BP} for the Hall-Littlewood functions. In the full generality the randomized version of RSK for q-Whittaker functions was constructed in \cite{MP} and further studied in \cite{P}.

The main result of Section \ref{sec:RSK} is a construction of a randomized Hall-Littlewood RSK (HL-RSK) algorithm. Let us review properties of our construction.

Let $\Y$ be the set of all Young diagrams, and for $\la \in \Y$ let $|\la|$ be the number of boxes in $\Y$. One way ({\it Fomin growth diagram}, due to Fomin \cite{F1}) to define the classical RSK algorithm is to define a certain operation on Young diagrams. Namely, consider Young diagrams $\la, \mu, \nu$ such that $\mu / \la$ and $\nu / \la$ are horizontal strips, and $r \in \Z_{\ge 0}$. Then there exists a map $U: \Y \times \Y \times \Y \times \Z_{\ge 0} \mapsto \Y$, $(\la, \mu, \nu, r) \mapsto U^r( \la;\mu, \nu)$ with many remarkable properties (in fact, there are several possible choices of the map: row or column insertion, usual or dual RSK, etc, which lead to maps with similar properties). We refer to \cite{Sta} for a detailed exposition of the classical RSK.

In the setting of Hall-Littlewood functions the map $U$ has to become random (see Section \ref{sec:RSKgoal} for a discussion of this point). That is, a randomized Hall-Littlewood RSK algorithm takes as an input $\la, \mu, \nu, r$ and outputs $\rho \in \Y$ with a certain probability $U^r ( \mu \to \rho \mid \la \to \nu)$ which needs to be defined. In this paper we also define a (randomized) inverse algorithm which takes as an input Young diagrams $\mu, \nu, \rho$ such that $\rho / \mu$ and $\rho / \nu$ are horizontal strips, and outputs a pair $(\mu, r) \in \Y \times \Z_{\ge 0}$ with a certain probability $\hat U^r ( \nu \to \la \mid \rho \to \mu )$ which needs to be defined.

We define coefficients $U^r ( \mu \to \rho \mid \la \to \nu)$ and $\hat U^r ( \nu \to \la \mid \rho \to \mu )$ in Section \ref{sec:RSK}. They have the following properties:

1) for any fixed $\la, \mu, \nu, r$ such that $\mu/\la$, $\nu / \la$ are horizontal strips we have
$$
\sum_{\rho} U^r ( \mu \to \rho \mid \la \to \nu) =1, \qquad U^r ( \mu \to \rho \mid \la \to \nu) \ge 0,
$$
and for $\mu, \nu, \rho$ such that $\rho / \mu$ and $\rho / \nu$ are horizontal strips we have
$$
\sum_{\la, r} \hat U^r ( \nu \to \la \mid \rho \to \mu ) =1, \qquad \hat U^r ( \nu \to \la \mid \rho \to \mu ) \ge 0.
$$
These equalities guarantee that we indeed have probabilistic algorithms.

2) The key property which relates the randomized HL-RSK and its inverse is
$$
\left( 1 - t \mathbf{1}_{r \ge 1} \right) \phi_{\nu / \la} \psi_{\mu / \la} U^r (\mu \to \rho \mid \la \to \nu) = \phi_{\rho / \mu} \psi_{\rho / \nu} \hat U^r ( \nu \to \la \mid \rho \to \mu ),
$$
see Section \ref{sec:3-prelim} for definitions of $\phi$- and $\psi$-functions. This property gives a bijective proof of the (generalized) skew Cauchy identity for Hall-Littlewood functions (see Section \ref{sec:bij-proof}).

Other properties are very similar for the (direct) HL-RSK algorithm and the inverse HL-RSK, so we formulate them for the former one only.

3) $U^r ( \mu \to \rho \mid \la \to \nu)=0$ unless $|\nu| - |\la| +r = |\rho| - |\mu|$, and $\rho / \nu$, $\rho / \mu$ are horizontal strips. 

4) The coefficients $U^r ( \mu \to \rho \mid \la \to \nu)$ are defined as a result of an iterative procedure. Informally, this iterative procedure ``glues'' elementary steps which are formed by cases when $r=0$, $|\mu|-|\la|=1$, $|\nu|-|\la|=1$. See Definition \ref{def:RSK-main1} and Definition \ref{def:refined} for precise statements.

5) Symmetry property: $U^r ( \mu \to \rho \mid \la \to \nu) = U^r ( \nu \to \rho \mid \la \to \mu)$.

6) For any $k$, the projection of the HL-RSK algorithm to the first $k$ columns of Young diagrams is Markovian.

All of these properties naturally generalize the properties of the classical RSK; one recovers the classical setting if $t$ is equal to $0$.

Our approach to the HL-RSK is significantly different from the approach of \cite{MP}, \cite{P} to the q-Whittaker RSK and provides a new way for constructing and working with such dynamics. We regard the consideration of the inverse HL-RSK algorithm, and properties 2) and 4) above as important novelties of the current paper. The symmetry property for the q-Whittaker RSK algorithm was proved in \cite{P}.

Iteratively applying the randomized HL-RSK algorithm for a two-dimensional array of inputs we define a new integrable probabilistic model --- the Hall-Littlewood RSK field (Section \ref{sec:field}), which is a probability measure on a two-dimensional array of Young diagrams. We show that it samples Hall-Littlewood processes (Proposition \ref{prop:pathHLprocess}). Thus, the results of Section \ref{sec:2} allow to compute rich families of observables of the Young diagrams and various degenerations of this large combinatorial object.


\subsection{Integrable degenerations}

Section \ref{sec:degen} deals with some available degenerations of the Hall-Littlewood RSK field.

At the beginning, we identify the well-known objects --- the stochastic six vertex model (introduced in \cite{GS} and recently studied in \cite{BCG}) and ASEP as degenerations of this field which appear when one considers the first column of random Young diagrams from the field only. This provides a new proof of the main result of \cite{BBW} which relates first columns of Young diagrams distributed according to Hall-Littlewood processes and the height function of the stochastic six vertex model (see also \cite{Bor} for connections between one point distributions of Macdonald measures and certain vertex models). As a corollary, results of Section \ref{sec:2} provide formulas for observables of these models. These formulas are essentially equivalent to the ones obtained in \cite{TW1}, \cite{TW2} for ASEP with step initial condition, and in \cite{BCG}, \cite{BP2} for the stochastic six vertex model by different methods, yet their derivation through Macdonald processes might be conceptually important.

Full Young diagrams from the Hall-Littlewood RSK field contain more information than just their first columns, and thus provide more integrable models. In this paper we introduce the multi-layer stochastic vertex model and the multi-layer ASEP, which are natural generalizations of the stochastic six vertex model and ASEP, respectively. For any $k \in \Z_{\ge 1}$ a $k$-layer stochastic vertex model appears when one restricts the Hall-Littlewood RSK field to the first $k$ columns of Young diagrams. Results of Section \ref{sec:2} provide observables for these models. We mainly focus on the two-layer stochastic vertex model for which we give an explicit description (see Section \ref{sec:2vertMod}).

We use the term ``multi-layer'' for our models by an analogy with \cite{PrSp} (see also \cite{BO}, \cite{OR}) where similar multi-layer objects were introduced in the case of the classical RSK algorithm related to Schur functions. The multi-layer ASEP seems to be of a quite different nature than the extensively studied multi-species ASEP (see e.g. \cite{TW3} and references therein about the latter).

Let us describe the two-layer ASEP dynamics which we introduce. This is a continuous time dynamics of particles which are placed in $\Z+1/2$. The particles are of two types. Each point of $\Z+1/2$ might contain either one or zero particles of each type; in particular, it might contain two particles of different types simultaneously. Let $\mathbf{\hat X (\tau)} \subset \Z$, $\tau \in \R_{\ge 0}$, be positions of particles of the first type at time $\tau$, and let $\mathbf{\hat Y (\tau)}$ be positions of particles of the second type at time $\tau$. At $\tau=0$ let $\mathbf{\hat X} (0) = \mathbf{\hat Y} (0) = \{ -1/2, -3/2, -5/2, \dots \}$ (step initial condition).

Let $h_0 (n; \tau)$, $n \in \Z$, be the number of particles of the first type which are to the right of $n$ at moment $\tau$, and let $h_1 (n;\tau)$ be the number of particles of the second type which are to the right of $n$ at moment $\tau$. These are convenient height functions of the model. We will also need the following quantity
$$
k (n;\tau) := h_1 (n;\tau) - h_0 (n;\tau).
$$
The rules of dynamics will imply that $k (n;\tau) \ge 0$ for all $n$ and $\tau$.

At each moment of time and for each $n \in \Z$ there is a certain rate with which the particles in $(n-1/2, n+1/2)$ change their positions according to rules of the dynamics. This is a local change --- the filling of $(\Z+1/2) \backslash \{ n-1/2, n+1/2 \}$ remains the same. All possible local changes and their rates are shown in Figure \ref{fig:2levAsepII}. These rules completely determine the dynamics.

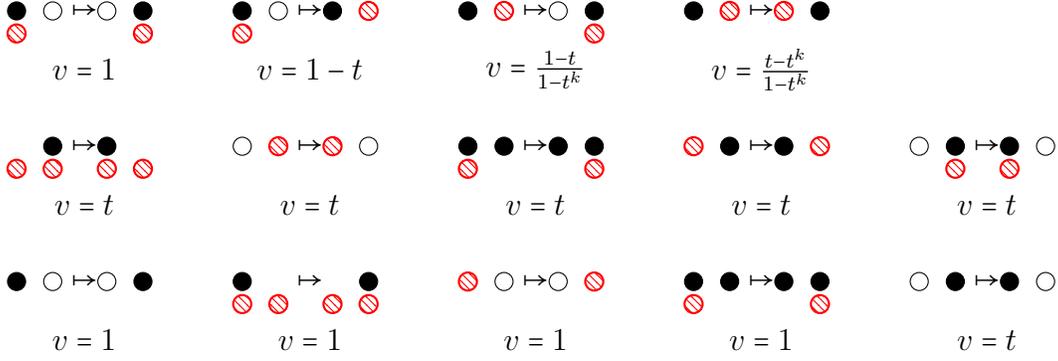
\begin{figure}
\begin{tikzpicture}[>=stealth,scale=0.6]
\draw[black,fill=black] (1,1) circle (0.2);
\draw[black] (1.8,1) circle (0.2);
\node at (2.5,1) {$\mapsto$};
\draw[black] (3,1) circle (0.2);
\draw[black,fill=black] (3.8,1) circle (0.2);
\node at (2.5,-0.3) {$v=1$};
\draw[black,fill=black] (6,1) circle (0.2); \fill[draw=red, thick, pattern=north west lines, pattern color=red] (6,0.5) circle (0.2);
\fill[draw=red, thick, pattern=north west lines, pattern color=red] (6.8,0.5) circle (0.2);
\node at (7.5,1) {$\mapsto$};
\fill[draw=red, thick, pattern=north west lines, pattern color=red] (8,0.5) circle (0.2);
\draw[black,fill=black] (8.8,1) circle (0.2); \fill[draw=red, thick, pattern=north west lines, pattern color=red] (8.8,0.5) circle (0.2);
\node at (7.5,-0.3) {$v=1$};
\fill[draw=red, thick, pattern=north west lines, pattern color=red] (11,1) circle (0.2);
\draw[black] (11.8,1) circle (0.2);
\node at (12.5,1) {$\mapsto$};
\draw[black] (13,1) circle (0.2);
\fill[draw=red, thick, pattern=north west lines, pattern color=red] (13.8,1) circle (0.2);
\node at (12.5,-0.3) {$v=1$};
\draw[black,fill=black] (16,1) circle (0.2); \fill[draw=red, thick, pattern=north west lines, pattern color=red] (16,0.5) circle (0.2);
\draw[black,fill=black] (16.8,1) circle (0.2);
\node at (17.5,1) {$\mapsto$};
\draw[black,fill=black] (18,1) circle (0.2);
\draw[black,fill=black] (18.8,1) circle (0.2); \fill[draw=red, thick, pattern=north west lines, pattern color=red] (18.8,0.5) circle (0.2);
\node at (17.5,-0.3) {$v=1$};
\draw[black] (21,1) circle (0.2);
\draw[black, fill=black] (21.8,1) circle (0.2);
\node at (22.5,1) {$\mapsto$};
\draw[black, fill=black] (23,1) circle (0.2);
\draw[black] (23.8,1) circle (0.2);
\node at (22.5,-0.3) {$v=t$};
\draw[black,fill=black] (1.8,4) circle (0.2); \fill[draw=red, thick, pattern=north west lines, pattern color=red] (1,3.5) circle (0.2);
\fill[draw=red, thick, pattern=north west lines, pattern color=red] (1.8,3.5) circle (0.2);
\node at (2.5,4) {$\mapsto$};
\fill[draw=red, thick, pattern=north west lines, pattern color=red] (3,3.5) circle (0.2);
\draw[black,fill=black] (3,4) circle (0.2); \fill[draw=red, thick, pattern=north west lines, pattern color=red] (3.8,3.5) circle (0.2);
\node at (2.5,2.7) {$v=t$};
\fill[draw=red, thick, pattern=north west lines, pattern color=red] (6.8,4) circle (0.2);
\draw[black] (6,4) circle (0.2);
\node at (7.5,4) {$\mapsto$};
\draw[black] (8.8,4) circle (0.2);
\fill[draw=red, thick, pattern=north west lines, pattern color=red] (8,4) circle (0.2);
\node at (7.5,2.7) {$v=t$};
\draw[black,fill=black] (11,4) circle (0.2); \fill[draw=red, thick, pattern=north west lines, pattern color=red] (11,3.5) circle (0.2);
\draw[black,fill=black] (11.8,4) circle (0.2);
\node at (12.5,4) {$\mapsto$};
\draw[black,fill=black] (13,4) circle (0.2);
\draw[black,fill=black] (13.8,4) circle (0.2); \fill[draw=red, thick, pattern=north west lines, pattern color=red] (13.8,3.5) circle (0.2);
\node at (12.5,2.7) {$v=t$};
\draw[black,fill=black] (16.8,4) circle (0.2);
\fill[draw=red, thick, pattern=north west lines, pattern color=red] (16,4) circle (0.2);
\node at (17.5,4) {$\mapsto$};
\fill[draw=red, thick, pattern=north west lines, pattern color=red] (18.8,4) circle (0.2);
\draw[black,fill=black] (18,4) circle (0.2);
\node at (17.5,2.7) {$v=t$};
\draw[black,fill=black] (21.8,4) circle (0.2); \fill[draw=red, thick, pattern=north west lines, pattern color=red] (21.8,3.5) circle (0.2);
\draw[black] (21,4) circle (0.2);
\node at (22.5,4) {$\mapsto$};
\draw[black] (23.8,4) circle (0.2);
\draw[black,fill=black] (23,4) circle (0.2); \fill[draw=red, thick, pattern=north west lines, pattern color=red] (23,3.5) circle (0.2);
\node at (22.5,2.7) {$v=t$};
\draw[black,fill=black] (1,7) circle (0.2); \fill[draw=red, thick, pattern=north west lines, pattern color=red] (1,6.5) circle (0.2);
\draw[black] (1.8,7) circle (0.2);
\node at (2.5,7) {$\mapsto$};
\draw[black] (3,7) circle (0.2);
\draw[black,fill=black] (3.8,7) circle (0.2); \fill[draw=red, thick, pattern=north west lines, pattern color=red] (3.8,6.5) circle (0.2);
\node at (2.5,5.7) {$v=1$};
\fill[draw=red, thick, pattern=north west lines, pattern color=red] (6,6.5) circle (0.2);
\draw[black,fill=black] (6,7) circle (0.2); \draw[black] (6.8,7) circle (0.2);
\node at (7.5,7) {$\mapsto$};
\fill[draw=red, thick, pattern=north west lines, pattern color=red] (8.8,7) circle (0.2);
\draw[black,fill=black] (8,7) circle (0.2);
\node at (7.5,5.7) {$v=1-t$};
\draw[black,fill=black] (11,7) circle (0.2); \fill[draw=red, thick, pattern=north west lines, pattern color=red] (11.8,7) circle (0.2);
\node at (12.5,7) {$\mapsto$};
\draw[black,fill=black] (13.8,7) circle (0.2); \fill[draw=red, thick, pattern=north west lines, pattern color=red] (13.8,6.5) circle (0.2);
\draw[black] (13,7) circle (0.2);
\node at (12.5,5.7) {$v=\frac{1-t}{1-t^k}$};
\fill[draw=red, thick, pattern=north west lines, pattern color=red] (16.8,7) circle (0.2);
\draw[black,fill=black] (16,7) circle (0.2);
\node at (17.5,7) {$\mapsto$};
\draw[black,fill=black] (18.8,7) circle (0.2);
\fill[draw=red, thick, pattern=north west lines, pattern color=red] (18,7) circle (0.2);
\node at (17.5,5.7) {$v=\frac{t-t^k}{1-t^k}$};
\end{tikzpicture}
\caption{Jump rates of the two-layer ASEP. An empty circle denotes an empty site, while filled black and shaded red circles denote particles of first and second type, respectively. The change happens in points $(n-1/2)$, $(n+1/2)$, and $k = k(n;\tau)$ is a parameter of the configuration.}
\label{fig:2levAsepII}
\end{figure}

Note that the particles of the first type form a Markov dynamics. It is visible from the rules that this dynamics is ASEP in which the rate of a jump to the right is $1$ and the rate of a jump to the left is $t<1$. The evolution of particles of the second type depends on the evolution of particles of the first type. A new feature is that some rates depend on the parameter $k(n;\tau)$.

Let $[z^k] A(z)$ denote the coefficient of $z^k$ in a Laurent series $A(z)$. Results of Section \ref{sec:2} provide the following formulas for observables of these height functions.

\begin{proposition}
\label{prop:2ASEPform2}
Fix an integer $k$, and consider an arbitrary sequence $(s(1), \dots, s(k))$, $s(i) \in \{0,1 \}$, $i=1, \dots, k$. Let $\{ z_{u;f} \}$, $u=1, \dots, k$, $f=1, \dots, s(u)+1$, (so $f$ takes either one or two values depending on $s(u)$) be formal variables.
For any integers $m_1 \ge m_2 \ge \dots \ge m_k$ and $\tau \in \R_{>0}$ the height functions of the two-layer ASEP with step initial condition satisfy
\begin{multline*}
\mathbf E t^{ h_{s(1)} (m_1;\tau) + h_{s(2)} (m_2; \tau)+ \dots + h_{s(k)} (m_k;\tau)} = \left[ \prod_{u=1}^k \prod_{f=1}^{s(u)+1} z_{u;f}^{-1} \right] \prod_{1 \le i<j \le k} \prod_{f=1}^{s(i)+1} \prod_{\hat f=1}^{s(j)+1} \frac{1- z_{i;f}^{-1} z_{j;\hat f}}{1 - t^{-1} z_{i;f}^{-1} z_{j;\hat f}}
\\ \times \prod_{l=1}^k \left( \mathbf{1}_{s(l)=1} \frac{- (z_{l;1} - z_{l;2})^2}{2 z_{l;1}^2 z_{l;2}^2} + \mathbf{1}_{s(l)=0} \frac{1}{z_{l;1}} \right) \prod_{l=1}^k \prod_{f=1}^{s(l)+1} \exp \left( \frac{ - \tau z_{l;f} (1-t)^2}{(1+z_{l;f})(1+t z_{l;f})} \right) \left( \frac{z_{l;f} + t^{-1}}{z_{l;f} +1} \right)^{m_l}.
\end{multline*}
\end{proposition}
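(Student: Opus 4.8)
The plan is to obtain this formula as a degeneration of the general Hall--Littlewood process observable formula, Theorem \ref{prop:gen-HL-proc}, routed through the identification of the two-layer ASEP with the first two columns of the Hall--Littlewood RSK field. First I would make the combinatorial reduction explicit: by the construction of Section \ref{sec:2asep}, the continuous-time two-layer ASEP is the degeneration of the field restricted to the first two columns, with the height functions $h_0(m;\tau)$ and $h_1(m;\tau)$ recording the relevant information about the first two column lengths along the corresponding space-time path (here $k(n;\tau)$ is the local second-column datum, consistent with $h_1\ge h_0$). By Proposition \ref{prop:pathHLprocess} the Young diagrams along such a path are distributed as a Hall--Littlewood process, so each factor $t^{h_{s(u)}(m_u;\tau)}$ becomes a column observable of that process, with $s(u)=0$ coupling the first column and $s(u)=1$ coupling the first two columns jointly. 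This is precisely the class of observables for which Theorem \ref{prop:gen-HL-proc} produces closed formulas.

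Next I would invoke Theorem \ref{prop:gen-HL-proc} directly. That theorem expresses the expectation of a product of such observables as a coefficient extraction in formal variables, attaching $s(u)+1$ variables to the $u$-th observable: one variable $z_{u;1}$ when $s(u)=0$ and the pair $z_{u;1},z_{u;2}$ when $s(u)=1$, matching the index set $\{z_{u;f}\}$ of the statement. The pairwise factors $\prod_{i<j}\prod_{f,\hat f}\frac{1-z_{i;f}^{-1}z_{j;\hat f}}{1-t^{-1}z_{i;f}^{-1}z_{j;\hat f}}$ are exactly the interaction terms of that formula produced by its nested structure, while the per-index factor carrying the indicators $\mathbf 1_{s(l)=0}$ and $\mathbf 1_{s(l)=1}$ is the local weight of a single- versus two-variable observable; in the $s(l)=1$ case the antisymmetric shape $-(z_{l;1}-z_{l;2})^2/(2z_{l;1}^2z_{l;2}^2)$ is what the two-variable extraction contributes after antisymmetrization.

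Finally I would specialize to the data underlying the two-layer ASEP with step initial condition. The step initial condition fixes the principal/spatial specializations, and the continuous-time limit replaces the discrete stochastic-vertex weights of the field by their Poisson (exponential) degeneration; under this passage the specialization-dependent part of the general formula collapses to $\prod_l\prod_f \exp\!\big(\tfrac{-\tau z_{l;f}(1-t)^2}{(1+z_{l;f})(1+tz_{l;f})}\big)\big(\tfrac{z_{l;f}+t^{-1}}{z_{l;f}+1}\big)^{m_l}$, with the time $\tau$ entering through the exponential and the position $m_l$ through the rational power; the prefactor $\prod_{u,f}z_{u;f}^{-1}$ is the normalization of the coefficient extraction.

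I expect the main obstacle to be the limit bookkeeping rather than the algebra of Theorem \ref{prop:gen-HL-proc}: one must track carefully how the step initial condition and the geometric/six-vertex weights degenerate to the exponential rate factor while preserving the ordering constraint $m_1\ge\cdots\ge m_k$ (which corresponds to the nesting and horizontal-strip conditions under which the observable formula is valid), and verify that no spurious poles are crossed as the contours are deformed in the limit. A secondary delicate point is fixing the sign and the factor $1/2$ in the $s(l)=1$ weight, which requires keeping the orientation of the two-variable antisymmetrization consistent with the convention of Theorem \ref{prop:gen-HL-proc}.
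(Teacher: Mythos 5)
Your proposal is correct and takes essentially the same route as the paper: the paper obtains Proposition \ref{prop:2ASEPform2} from Theorem \ref{prop:gen-HL-proc} exactly as you describe, via the identification of the two-layer ASEP with the first two columns of the HL-RSK field (Propositions \ref{prop:pathHLprocess} and \ref{prop:form-for-2verMod}) followed by the same scaling limit $a_i=b_j=a$, $a^2=1-(1-t)\eps$, $N=\lfloor\tau\eps^{-1}\rfloor$, $m_l\mapsto N+\tilde m_l$ used for Proposition \ref{prop:momAsep}. The only cosmetic difference is that the paper performs this limit on formal Laurent coefficient extractions rather than on contour integrals, so the "pole-crossing" concern you raise is replaced by the factor-by-factor convergence to the exponential term.
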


If we consider the height function $h_0$ only, this proposition provides formulas which are essentially equivalent to the formulas of \cite{TW1}, \cite{TW2} for ASEP.

\subsection*{Further directions}

There are two ongoing projects which are based on the results of the present paper. The first project studies the structure and the asymptotic behavior of the Hall-Littlewood RSK field in the $t\to 1$ limit. The second one deals with a RSK algorithm in the mixed q-Whittaker / Hall-Littlewood case (see Section \ref{sec:obs-mes} and Remark \ref{rem:mixed}).

\subsection*{Acknowledgements}

We are grateful to A.~Borodin for useful discussions. We are grateful to I.~Corwin for useful comments. 
A.~Bufetov was partially supported by The Foundation Sciences Mathematiques de Paris.

\section{Observables of Macdonald processes}
\label{sec:2}

\subsection{The completed algebra of symmetric functions}

In this section we briefly review some facts about the algebra of symmetric functions. We refer to \cite{M} for a detailed exposition and proofs of mentioned properties. Also, we attempt to mostly follow notations of \cite{BCGS}.

Let $X=(x_1, x_2, \dots)$ be a countable set of formal variables. Define Newton power sums via
$$
p_k := \sum_{i=1}^{\infty} x_i^k, \qquad k=1,2, \dots,
$$
and set the degree of $p_k$ to be equal to $k$.

The algebra of symmetric functions $\Lambda_X$ is the $\Z_{\ge 0}$ graded algebra $\C[1, p_1, p_2,\dots]$; any element $f$ of $\Lambda_X$ can be written as a sum $\sum_{k=0}^{\infty} f_k$, where each $f_k$ is a homogeneous symmetric function of degree $k$ and the sum contains only finitely many non-zero terms. For such a sum we define the lower degree $\mathrm{ldeg} (f)$ as the maximal $N$ such that $f_k=0$ for all $k<N$.

Equip $\Lambda_X$ with the graded topology in which a sequence $\{ f^{(n)} \}_{n \ge 1}$, $f^{(i)} \in \Lambda_X$, is a Cauchy sequence if and only if
$$
\lim_{\min\{m, n\} \to \infty} \mathrm{ldeg}(f^{(m)} - f^{(n)}) = + \infty.$$
The completion of $\Lambda_X$ in the graded topology is referred to as the completed algebra of symmetric functions and is denoted by $\bar \Lambda_X$.

Let $\Y$ be the set of all partitions. We will deal with families of Macdonald symmetric functions $\{ P_{\la} (X;q,t) \}_{\la \in \Y}$, $\{ Q_{\la} (X;q,t) \}_{\la \in \Y}$ and their skew versions $\{ P_{\la / \mu} (X;q,t) \}_{\la, \mu \in \Y}$, $\{ Q_{\la / \mu} (X;q,t) \}_{\la, \mu \in \Y}$ parameterized by two complex numbers $q$ and $t$ (see the definitions of these functions in \cite[Chapter 6]{M}). For two sets of formal variables $X =(x_1, x_2, \dots)$ and $Y=(y_1,y_2, \dots)$ Macdonald functions satisfy the Cauchy identity:
\begin{equation}
\label{eq:Cauchy-simp}
\sum_{\la} P_{\la} (X; q,t) Q_{\la} (Y;q,t) = \prod_{i,j} \frac{(t x_i y_j;q)_{\infty}}{(x_i y_j;q)_{\infty}} =: \p (X,Y;q,t);
\end{equation}
and the identity:
\begin{equation*}
\sum_{\mu} P_{\la / \mu} (X; q,t) P_{\mu} (Y;q,t) = P_{\la} (X,Y; q,t),
\end{equation*}
where
$$
(a;q)_{\infty} := (1-a) (1-aq) (1-aq^2) \dots
$$
is the $q$-Pochhammer symbol, and we denote $P_{\la} (X \cup Y; q,t)$ by $P_{\la} (X,Y; q,t)$.

Let $\omega_{q,t}^X: \bar \Lambda_X \to \bar \Lambda_X$ be the Macdonald automorphism in variables $X$, which satisfies the following properties:
\begin{equation*}
\omega_{q,t}^X \p (X,Y;q,t) = \prod_{i,j} \left(1 + x_i y_j \right), \qquad \omega_{q,t}^X \prod_{i,j} \left(1 + x_i y_j \right) = \p (X,Y;t,q),
\end{equation*}
(note that parameters $t$ and $q$ are swapped in the second formula), and
\begin{equation*}
\omega_{q,t}^{X} P_{\la} (X; q,t) = Q_{\la'} (X; t,q), \qquad \omega_{q,t}^{X} Q_{\la} (X; q,t) = P_{\la'} (X; t,q), \qquad \omega_{t,q} \omega_{q,t}= id,
\end{equation*}
where $\la'$ denotes the transposition of the partition $\la$. In particular, for a single formal variable $w$ we have
\begin{equation}
\label{eq:one-var-inv}
\omega_{q,t}^X \left( \prod_{i} \frac{1- t w x_i}{1 - w x_i} \right) = \omega_{q,t}^X \left( \frac{\p (X,w;q,t)}{\p (X,qw;q,t)} \right) = \prod_{i} \frac{1+ w x_i}{1+q w x_i}.
\end{equation}

\subsection{Macdonald measures and processes}

The \textit{formal Macdonald measure} is a function $\Y \to \bar \Lambda_X \otimes \bar \Lambda_Y$ defined by
\begin{equation}
\mm_{X,Y} (\la) := \frac{ P_{\la} (X;q,t) Q_{\la} (Y;q,t)}{ \p (X, Y; q, t)}.
\end{equation}
Note that the sum of these weights over all $\la$ equals $1$ due to \eqref{eq:Cauchy-simp}, that is why we refer to it as measure.

For two partitions $\la$, $\mu$ let
\begin{equation*}
\Psi_{\la,\mu} ( X,Y) := \sum_{\nu \in \Y} P_{\la / \nu} (X;q,t) Q_{\mu / \nu} (Y;q,t).
\end{equation*}

Fix $2N$ countable sets of formal variables $X_{1}, \ldots, X_{N}, Y_{1}, \ldots, Y_{N}$, let $\mathbf X = (X_1, \dots, X_N)$, $\mathbf Y = (Y_1, \dots, Y_N)$.  The  \textit{formal Macdonald process} is a function
$$
\mathbb Y^{N} \to \bar \Lambda_{X_1} \otimes \cdots \bar \Lambda_{X_N} \otimes \bar \Lambda_{Y_1} \otimes \cdots \bar \Lambda_{Y_N}.
$$
defined by
\begin{equation*}
\mm_{\mathbf X, \mathbf Y} \left(\la^{1}, \dots, \la^{N}\right) := \frac{ P_{\la^{1}} (X_1) \Psi_{\la^2, \la^1} (X_2, Y_1) \Psi_{\la^3, \la^2} (X_3, Y_2) \dots \Psi_{\la^N, \la^{N-1}} (X_N, Y_{N-1}) Q_{\la^N} (Y_N)}{ \prod_{1 \le i \le j \le N} \p \left( X^i;Y^j \right)}.
\end{equation*}
We refer to \cite[Section 3.1]{BCGS} for basic properties of these formal processes.

An algebra homomorphism $\Lambda_X \to \C$ is called a specialization of $\Lambda_X$. A specialization of $\Lambda_X$ is called Macdonald-positive, if its values on all Macdonald functions are nonnegative reals. In this paper we address only the so-called \textit{finite alpha specializations}; such specialization $\rho (a_1, a_2, \dots, a_m)$ is defined by
$$
p_k \mapsto a_1^k+ a_2^k + \dots + a_m^k,
$$
for any (finitely many) $a_1,\dots, a_m \ge 0$. If $0 \leq q, t < 1$, then any finite alpha specialization is Macdonald-positive.

Suppose $\rho_1^+$, $\dots$, $\rho_N^+$, $\rho_1^-, \dots, \rho_N^-$ are finite alpha specializations of $\Lambda_{X_1}, \dots \Lambda_{X_N}$, $\Lambda_{Y_1}, \dots, \Lambda_{Y_N}$, respectively, such that $ab<1$  for any pair $(a,b)$, where $a$ is a parameter of some $\rho_i^+$ and $b$ is a parameter of some $\rho_j^-$. Then one can consider the specialization
$$
\mm_{\mathbf X, \mathbf Y} \left(\la^{1}, \dots, \la^{N}\right) \xrightarrow[\rho_1^+, \dots, \rho_N^+, \rho_1^-, \dots, \rho_N^-]{} MM_{\rho_1^+, \dots, \rho_N^+, \rho_1^-, \dots, \rho_N^-} \left(\la^{1}, \dots, \la^{N}\right) \in \R.
$$

It is known (see, e.g., \cite{BC})  that collection of numbers $\left \{MM_{\rho_1^+, \dots, \rho_N^+, \rho_1^-, \dots, \rho_N^-} \left(\la^{1}, \dots, \la^{N}\right) \right \}$ is well defined under our assumptions  and provides a probability measure on sequences (of length $N$) of partitions --- it is referred to as a Macdonald process (without the word ``formal''). Probabilistically, one is interested in properties of these measures. However, as we will see, it is more efficient to study first a more general formal setting, and only then extract results about probabilistic objects by applying specializations.

\subsection{Observables of formal Macdonald measures}

The main tool (introduced in \cite{BC} and used in \cite{BCGS}) in the study of the Macdonald processes is the method to compute its observables by using Macdonald difference operators. In this and subsequent sections we show how the combination of this idea with the use of the Macdonald involution $w_{q,t}$ leads to new probabilistic results.

Let us set
$$
\mathcal{E}_r (\la) := \sum_{1 \le i_1 < i_2 < \dots < i_r} q^{-\la_{i_1} - \la_{i_2} - \dots - \la_{i_r} } t^{(i_1-1) + (i_2-1) \dots + (i_r-1)}, \qquad \la \in \Y, r \in \N,
$$
where the indices $i$'s range over positive integers yet the sum converges due to $0 \leq  t <1$.

For a Laurent power series
$$A(w_{1}, \ldots, w_{r}) = \sum_{(k_{1}, \ldots, k_{r}) \in \mathbb{Z}^{r}} A_{k_{1}, \ldots, k_{r}} w_{1}^{k_{1}}\cdots w_{r}^{k_{r}}$$ we denote  the coefficient $A_{k_{1}, \ldots, k_{r}}$ by $\left[w_{1}^{k_{1}}\cdots w_{r}^{k_{r}}\right]A(w_{1}, \ldots, w_{r})$. For operations with Laurent power series we adopt the usual convention:
\begin{equation}
\label{eq:Laur-conv}
\frac{f}{1-g} = f(1+g+g^{2}+g^{3}+\cdots).
\end{equation}

The following proposition was proved in \cite[Proposition 3.8]{BCGS}, see also \cite[Section 9]{Sh} and \cite[Proposition 3.6]{FHHSY} for similar statements.

\begin{proposition}
\label{prop:obs-meas}
We have
\begin{multline}
\label{eq:form-var-mes-1}
\sum_{\la \in \mathbb Y} \mathcal E_r (\la) P_{\la} (X; q,t) Q_{\la} (Y;q,t) = \Pi (X,Y;q,t) [ w_1^{-1} \dots w_r^{-1} ] \frac{t^{r(r-1)/2}} {r! (1-t)^r w_1 \dots w_r} \\ \times \prod_{k \ne l} \frac{1 - w_k^{-1} w_l}{1 - t w_k^{-1} w_l} \prod_{s=1}^r \left( \prod_{i \ge 1} \frac{1- t w_s x_i}{1-w_s x_i} \prod_{j \ge 1} \frac{1- t q^{-1} w_s^{-1} y_j}{1- q^{-1} w_s^{-1} y_j} \right),
\end{multline}
\end{proposition}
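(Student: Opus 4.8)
The plan is to realize the left-hand side as the result of applying an $r$-th Macdonald difference operator to the Cauchy kernel, and then to compute that action directly from the product form of the kernel. Recall (see \cite[Ch.~6]{M}) that $\mathcal E_r(\la)$ is precisely the eigenvalue of the $r$-th Macdonald difference operator $D_r^X$ on $P_\la$: it is the $r$-th elementary symmetric function of the quantities $q^{-\la_i} t^{i-1}$, so the relevant operator is the one built from the inverse shifts $x_i \mapsto q^{-1} x_i$. Writing $D_r^X$ for this operator acting in the $X$ variables, the eigenrelation $D_r^X P_\la(X;q,t) = \mathcal E_r(\la) P_\la(X;q,t)$ together with linearity and the Cauchy identity \eqref{eq:Cauchy-simp} gives
$$\sum_{\la} \mathcal E_r(\la) P_\la(X;q,t) Q_\la(Y;q,t) = D_r^X \Big( \sum_\la P_\la(X) Q_\la(Y) \Big) = D_r^X\, \p(X,Y;q,t).$$
Thus the entire content of the proposition reduces to evaluating $D_r^X \p$ and matching it with the claimed coefficient-extraction expression.

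To carry out this evaluation I would use the explicit combinatorial form of the difference operator, $D_r^X = t^{\binom r2}\sum_{|I|=r} \big(\prod_{i\in I,\, j\notin I} \tfrac{t x_i - x_j}{x_i - x_j}\big)\prod_{i\in I} T_i$, where $T_i$ shifts $x_i \mapsto q^{-1} x_i$. The action of a single shift on the kernel telescopes: from the $q$-Pochhammer definition in \eqref{eq:Cauchy-simp} one computes $\tfrac{T_i\,\p}{\p} = \prod_j \tfrac{1 - t q^{-1} x_i y_j}{1 - q^{-1} x_i y_j}$, so that $\prod_{i\in I} T_i$ produces one such factor for each active index. The remaining step is the standard device of writing the sum over $r$-subsets as a coefficient extraction: introducing variables $w_1,\dots,w_r$ and expanding each $\tfrac{1}{1-w_s x_i}$ by the convention \eqref{eq:Laur-conv}, one obtains
$$\sum_{|I|=r}\Big(\prod_{i\in I,\, j\notin I}\tfrac{t x_i - x_j}{x_i - x_j}\Big) \prod_{i\in I} G(x_i) = \tfrac{1}{r!}\,[w_1^{-1}\cdots w_r^{-1}]\prod_{k\ne l}\tfrac{1 - w_k^{-1} w_l}{1 - t w_k^{-1} w_l}\prod_s \Big(\prod_i \tfrac{1 - t w_s x_i}{1 - w_s x_i}\Big)\,G_s,$$
where the interaction factor $\prod_{k\ne l}\tfrac{1-w_k^{-1}w_l}{1-tw_k^{-1}w_l} = \prod_{k\ne l}\tfrac{w_k-w_l}{w_k - t w_l}$ is the $t$-deformed Vandermonde encoding the cross-ratios $\tfrac{tx_i-x_j}{x_i-x_j}$, and extracting the coefficient of $w_1^{-1}\cdots w_r^{-1}$ from the $X$-kernel $\prod_i\tfrac{1-tw_sx_i}{1-w_sx_i}$ selects the active indices. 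Re-expressing the active $x_i$ as $w_s^{-1}$ in the telescoped shift turns $\tfrac{T_i\p}{\p}$ into $\prod_j \tfrac{1 - t q^{-1} w_s^{-1} y_j}{1 - q^{-1} w_s^{-1} y_j}$, which is exactly the $Y$-factor in the statement; combining the scalar $t^{\binom r2}$ with the $\tfrac1{r!}$ from symmetrization and the $\tfrac{1}{(1-t)^r w_1\cdots w_r}$ generated by the conversion yields the stated prefactor $\tfrac{t^{r(r-1)/2}}{r!(1-t)^r w_1\cdots w_r}$.

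The main obstacle, and the step requiring the most care, is the rigorous conversion of the subset sum into the coefficient-extraction form with correct normalization, together with its stabilization to infinitely many variables in $\bar\Lambda_X$. One must verify that extraction of $w_1^{-1}\cdots w_r^{-1}$ reconstructs exactly the contributions at $w_s \sim x_i^{-1}$ (and no spurious ones), that the $t$-Vandermonde reproduces the cross-ratios with correct combinatorial multiplicity, and that the factors $\tfrac1{r!}$ and $\tfrac1{(1-t)^r}$ appear with the stated powers; this is where the constant $\tfrac{t^{r(r-1)/2}}{r!(1-t)^r}$ is pinned down. Passage to the completed algebra is then handled degree by degree: since $0\le t<1$ guarantees convergence of the sums defining $\mathcal E_r$ and coefficient extraction acts within each homogeneous component, the identity is well defined in $\bar\Lambda_X\otimes\bar\Lambda_Y$ and can be checked on each graded piece. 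I expect the bookkeeping of signs and of the $q^{-1}$ shifts --- matching the $q^{-\la}$ that appears in $\mathcal E_r$ --- to be the most error-prone part of the argument.
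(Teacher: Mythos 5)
A preliminary remark on comparison: the paper does not prove Proposition \ref{prop:obs-meas} at all --- it quotes it from \cite[Proposition 3.8]{BCGS} --- and your strategy (realize $\mathcal E_r(\la)$ as an eigenvalue of a Macdonald-type difference operator, apply that operator to the Cauchy kernel \eqref{eq:Cauchy-simp}, then rewrite the resulting subset sum as a coefficient extraction in auxiliary variables $w_1,\dots,w_r$) is exactly the strategy of that cited proof. However, as written your sketch has a genuine error at its crux: the operator you define does not have eigenvalue $\mathcal E_r(\la)$. With coefficients $\prod_{i\in I,\,j\notin I}\frac{tx_i-x_j}{x_i-x_j}$ and shifts $x_i\mapsto q^{-1}x_i$, the $n$-variable operator acts on $P_\la$ with eigenvalue $e_r\bigl(q^{-\la_1}t^{n-1},q^{-\la_2}t^{n-2},\dots,q^{-\la_n}\bigr)$ (check $n=2$, $r=1$, $\la=(1,0)$: one gets $1+tq^{-1}$, not $q^{-1}+t$); the $t$-weights attach to the parts in the reversed order, and this quantity degenerates to $\frac{1}{1-t}$, independently of $\la$, as $n\to\infty$, so the whole scheme collapses. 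The correct operator --- obtained from Macdonald's $D^r_n$ by the substitution $(q,t)\to(q^{-1},t^{-1})$, the symmetry $P_\la(\cdot\,;q,t)=P_\la(\cdot\,;q^{-1},t^{-1})$, and renormalization by $t^{r(n-1)}$ --- is
\begin{equation*}
t^{r(r-1)/2}\sum_{|I|=r}\ \prod_{i\in I,\,j\notin I}\frac{x_i-tx_j}{x_i-x_j}\ \prod_{i\in I}T_{q^{-1},x_i},
\end{equation*}
with eigenvalue $e_r\bigl(q^{-\la_i}t^{i-1};\,1\le i\le n\bigr)$, the $n$-truncation of $\mathcal E_r(\la)$. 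Note that it is this cross-ratio $\frac{x_i-tx_j}{x_i-x_j}$, and not yours, that the residues of $\prod_i\frac{1-tw_sx_i}{1-w_sx_i}$ at $w_s=1/x_i$ actually produce, so the residue bookkeeping in the second half of your sketch is only consistent with the corrected operator.

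There is a second, more conceptual gap: you state that one must verify that the extraction of $w_1^{-1}\cdots w_r^{-1}$ reconstructs ``exactly the contributions at $w_s\sim x_i^{-1}$ (and no spurious ones).'' If that were true, the proposition would be false. Even with the corrected operator, its $n$-variable eigenvalue is only the truncation of $\mathcal E_r(\la)$ to the first $n$ rows, whereas the left-hand side of \eqref{eq:form-var-mes-1} carries the full $\mathcal E_r(\la)$, which includes the tail coming from the empty rows (for $r=1$ the discrepancy is exactly $t^n/(1-t)$). On the right-hand side this tail is produced precisely by the contribution of the coefficient extraction that does \emph{not} localize at any $w_s=1/x_i$ --- analytically, the residue at infinity of each $w_s$-integral; formally, the pairing of the constant terms under the expansion convention \eqref{eq:Laur-conv}. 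These ``spurious'' terms are therefore not an error to be excluded but the mechanism that turns truncated eigenvalues into $\mathcal E_r(\la)$ and makes the identity stable under $n\to\infty$, hence meaningful in $\bar\Lambda_X\otimes\bar\Lambda_Y$. A finite-$n$ argument organized around a pure residue-sum picture would prove a false identity and would not pass to the limit; the stabilization step must be organized around this boundary contribution, as is done in \cite{BCGS}.
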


\begin{remark}
It is an important part of Proposition \ref{prop:obs-meas} that according to \eqref{eq:Laur-conv} we expand the right-hand side of \eqref{eq:form-var-mes-1} into a Laurent series in $w$ variables with the use of
\begin{equation*}
\frac{1}{w_k - t w_l} = w_k^{-1} \sum_{a=0}^{\infty} t^a w_l^a w_k^{-a},
\end{equation*}
and expanding
\begin{equation*}
\prod_{i \ge 1} \frac{1- t w_s x_i}{1-w_s x_i} , \qquad  \frac{1- t q^{-1} w_s^{-1} y_j}{1- q^{-1} w_s^{-1} y_j},
\end{equation*}
into a power series with non-negative powers of $(w_s x_i)$ and $(q^{-1} w_s^{-1} y_j)$, respectively. We apply \eqref{eq:Laur-conv} in all subsequent formulas as well.
\end{remark}

Note that both sides of equation \eqref{eq:form-var-mes-1} belong to $\bar \Lambda_X \otimes \bar \Lambda_Y$. Then the application of $\omega_{q,t}^X$  and the use of \eqref{eq:one-var-inv} immediately gives the following statement.

\begin{proposition}
We have
\begin{multline}
\label{eq:form-var-mes-2}
\sum_{\la \in \mathbb Y} \mathcal E_r (\la) Q_{\la'} (X; t,q) Q_{\la} (Y;q,t) = \prod_{i,j \ge 1} \left(1 + x_i y_j \right) [ w_1^{-1} \dots w_r^{-1} ] \frac{t^{r(r-1)/2}} {r! (1-t)^r w_1 \dots w_r} \\ \times \prod_{k \ne l} \frac{w_k - w_l}{w_k - t w_l} \prod_{s=1}^r \left( \prod_{i \ge 1} \frac{1 + w_s x_i}{1 + q w_s x_i} \prod_{j \ge 1} \frac{1- t q^{-1} w_s^{-1} y_j}{1- q^{-1} w_s^{-1} y_j} \right),
\end{multline}
\end{proposition}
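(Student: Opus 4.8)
The plan is to apply the Macdonald automorphism $\omega_{q,t}^X$ (acting only on the $X$ variables) to both sides of \eqref{eq:form-var-mes-1}. This is legitimate because both sides of \eqref{eq:form-var-mes-1} lie in $\bar\Lambda_X\otimes\bar\Lambda_Y$, and $\omega_{q,t}^X$ is an algebra automorphism of $\bar\Lambda_X$. The entire argument is then a bookkeeping of where the $X$-dependence sits, followed by the evaluation of $\omega_{q,t}^X$ on the relevant Cauchy factors using the properties recorded above.

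First I would treat the left-hand side. The coefficient $\mathcal E_r(\la)$ is a scalar, and $Q_\la(Y;q,t)\in\bar\Lambda_Y$ carries no $X$-dependence, so both are fixed by $\omega_{q,t}^X$; only $P_\la(X;q,t)$ is transformed, and by the stated property $\omega_{q,t}^X P_\la(X;q,t)=Q_{\la'}(X;t,q)$. Hence the left-hand side of \eqref{eq:form-var-mes-1} becomes $\sum_\la \mathcal E_r(\la)\,Q_{\la'}(X;t,q)\,Q_\la(Y;q,t)$, which is precisely the left-hand side of \eqref{eq:form-var-mes-2}.

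For the right-hand side, the key move is to push $\omega_{q,t}^X$ through the formal coefficient extraction $[w_1^{-1}\cdots w_r^{-1}]$ and through every factor that does not involve $X$. Since $\omega_{q,t}^X$ is multiplicative, the prefactor $\p(X,Y;q,t)$ is handled separately via $\omega_{q,t}^X\,\p(X,Y;q,t)=\prod_{i,j}(1+x_iy_j)$, producing exactly the prefactor in \eqref{eq:form-var-mes-2}. Inside the residue, the scalar factor $t^{r(r-1)/2}/\bigl(r!(1-t)^r w_1\cdots w_r\bigr)$, the spectral factor $\prod_{k\ne l}(1-w_k^{-1}w_l)/(1-tw_k^{-1}w_l)$, and the $Y$-factors $\prod_j(1-tq^{-1}w_s^{-1}y_j)/(1-q^{-1}w_s^{-1}y_j)$ are all inert under $\omega_{q,t}^X$. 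Thus only the $X$-factor $\prod_{s=1}^r\prod_i (1-tw_sx_i)/(1-w_sx_i)$ is transformed, and applying \eqref{eq:one-var-inv} with $w=w_s$ for each $s$ gives $\prod_{s=1}^r\prod_i (1+w_sx_i)/(1+qw_sx_i)$. Rewriting the spectral factor as $(1-w_k^{-1}w_l)/(1-tw_k^{-1}w_l)=(w_k-w_l)/(w_k-tw_l)$ and reassembling the residue then yields \eqref{eq:form-var-mes-2} verbatim.

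The only step requiring genuine care — and hence the main obstacle, albeit a mild one — is the interchange of $\omega_{q,t}^X$ with the formal extraction $[w_1^{-1}\cdots w_r^{-1}]$. To justify it I would observe that, under the expansion convention \eqref{eq:Laur-conv}, each coefficient in the $w$-Laurent expansion is a well-defined element of $\bar\Lambda_X\otimes\bar\Lambda_Y$, so the continuous automorphism $\omega_{q,t}^X$ may be applied term by term. One must additionally check that the expansion direction is preserved: the image $\prod_i (1+w_sx_i)/(1+qw_sx_i)$ is again a power series in nonnegative powers of $w_s$, exactly as the original $\prod_i (1-tw_sx_i)/(1-w_sx_i)$, so the residue $[w_1^{-1}\cdots w_r^{-1}]$ continues to select the same finite collection of terms on both sides, and no ambiguity in the convention \eqref{eq:Laur-conv} is introduced by the transformation.
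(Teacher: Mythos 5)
Your proof is correct and follows exactly the paper's own route: apply $\omega_{q,t}^X$ to both sides of \eqref{eq:form-var-mes-1}, transform $P_\la(X;q,t)$ into $Q_{\la'}(X;t,q)$ on the left, and use $\omega_{q,t}^X\,\p(X,Y;q,t)=\prod_{i,j}(1+x_iy_j)$ together with \eqref{eq:one-var-inv} on the right. The only difference is that you spell out the interchange of $\omega_{q,t}^X$ with the coefficient extraction $[w_1^{-1}\cdots w_r^{-1}]$, which the paper treats as immediate; this added care is harmless and does not change the argument.
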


Alternatively, we can apply $\omega_{q,t}^Y$ to both sides of \eqref{eq:form-var-mes-1} and obtain
\begin{proposition}
We have
\begin{multline}
\label{eq:form-var-mes-3}
\sum_{\la \in \mathbb Y} \mathcal E_r (\la) P_{\la} (X; q,t) P_{\la'} (Y;t,q) = \prod_{i,j \ge 1} \left(1 + x_i y_j \right) [ w_1^{-1} \dots w_r^{-1} ] \frac{t^{r(r-1)/2}} {r! (1-t)^r w_1 \dots w_r} \\ \times \prod_{k \ne l} \frac{w_k - w_l}{w_k - t w_l} \prod_{s=1}^r \left( \prod_{i \ge 1} \frac{1- t w_s x_i}{1-w_s x_i} \prod_{j \ge 1} \frac{1 + q^{-1} w_s^{-1} y_j}{1 + w_s^{-1} y_j} \right),
\end{multline}
\end{proposition}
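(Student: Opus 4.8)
The plan is to derive \eqref{eq:form-var-mes-3} from \eqref{eq:form-var-mes-1} by applying the Macdonald involution $\omega_{q,t}^Y$ in the $Y$-variables to both sides, in exact parallel with the derivation of \eqref{eq:form-var-mes-2} (where $\omega_{q,t}^X$ was applied instead). Throughout, I regard both sides of \eqref{eq:form-var-mes-1} as elements of $\bar\Lambda_X\otimes\bar\Lambda_Y$ and let $\omega_{q,t}^Y$ denote $\mathrm{id}_X\otimes\omega_{q,t}^Y$, which acts only on the $Y$-part and fixes everything lying in $\bar\Lambda_X$; in particular it fixes the scalars $\mathcal E_r(\la)$, the factors $P_\la(X;q,t)$ and $\prod_i\frac{1-tw_sx_i}{1-w_sx_i}$, and all the purely $w$-dependent rational factors, none of which carry $Y$-dependence.

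For the left-hand side, since $\mathcal E_r(\la)$ is a scalar and $P_\la(X;q,t)\in\bar\Lambda_X$, only $Q_\la(Y;q,t)$ is affected, and the involution identity $\omega_{q,t}^Y Q_\la(Y;q,t)=P_{\la'}(Y;t,q)$ turns $\sum_\la \mathcal E_r(\la)P_\la(X;q,t)Q_\la(Y;q,t)$ into $\sum_\la \mathcal E_r(\la)P_\la(X;q,t)P_{\la'}(Y;t,q)$, which is exactly the left-hand side of \eqref{eq:form-var-mes-3}.

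For the right-hand side, I would first move $\omega_{q,t}^Y$ inside the coefficient extraction $[w_1^{-1}\cdots w_r^{-1}]$ and past all $w$- and $X$-dependent factors, so that it acts only on the two genuinely $Y$-dependent pieces: the prefactor $\p(X,Y;q,t)$ and, for each $s$, the product $\prod_{j}\frac{1-tq^{-1}w_s^{-1}y_j}{1-q^{-1}w_s^{-1}y_j}$. For the prefactor, expanding via the Cauchy identity \eqref{eq:Cauchy-simp} and applying the involution term by term gives $\omega_{q,t}^Y\p(X,Y;q,t)=\sum_\la P_\la(X;q,t)P_{\la'}(Y;t,q)=\prod_{i,j\ge1}(1+x_iy_j)$. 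For the $s$-th product, I apply the single-variable identity \eqref{eq:one-var-inv} with $Y$ in place of $X$ and the single formal variable $w:=q^{-1}w_s^{-1}$, which yields $\prod_j\frac{1+q^{-1}w_s^{-1}y_j}{1+w_s^{-1}y_j}$. Finally the cosmetic rewriting $\frac{1-w_k^{-1}w_l}{1-tw_k^{-1}w_l}=\frac{w_k-w_l}{w_k-tw_l}$ (clearing $w_k^{-1}$ from numerator and denominator) puts the cross-term product into the form displayed in \eqref{eq:form-var-mes-3}. Collecting these substitutions reproduces the right-hand side of \eqref{eq:form-var-mes-3} verbatim.

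The only step requiring genuine justification, rather than bookkeeping, is the interchange of $\omega_{q,t}^Y$ with the $w$-series manipulations. Here I would use that $\omega_{q,t}^Y$ is a degree-preserving (hence continuous in the graded topology) automorphism of $\bar\Lambda_Y$, and that $\prod_j\frac{1-tq^{-1}w_s^{-1}y_j}{1-q^{-1}w_s^{-1}y_j}$, expanded in non-negative powers of $(q^{-1}w_s^{-1}y_j)$ as prescribed by the Remark following Proposition \ref{prop:obs-meas}, has homogeneous symmetric coefficients of each degree in $Y$; the involution then acts degree by degree, legitimately commuting with the formal $w$-expansion and with the coefficient extraction. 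Once this is in place, \eqref{eq:one-var-inv} applied in the $Y$-variables handles each factor exactly as in the $X$-case of \eqref{eq:form-var-mes-2}, and the proposition follows.
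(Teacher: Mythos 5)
Your proposal is correct and coincides with the paper's own proof: the paper obtains \eqref{eq:form-var-mes-3} precisely by applying $\omega_{q,t}^Y$ to both sides of \eqref{eq:form-var-mes-1}, using $\omega_{q,t}^Y Q_{\la}(Y;q,t)=P_{\la'}(Y;t,q)$ on the left and the dual Cauchy product together with \eqref{eq:one-var-inv} (with $w=q^{-1}w_s^{-1}$) on the right, exactly as you do. Your additional remarks on commuting the involution with the coefficient extraction are a careful elaboration of a step the paper treats as immediate.
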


Finally, applying $\omega_{q,t}^Y$ to \eqref{eq:form-var-mes-2} we get
\begin{proposition}
We have
\begin{multline}
\label{eq:form-var-mes-4}
\sum_{\la \in \mathbb Y} \mathcal E_r (\la) Q_{\la'} (X; t,q) P_{\la'} (Y;t,q) = \Pi (X,Y;t,q) [ w_1^{-1} \dots w_r^{-1} ] \frac{t^{r(r-1)/2}} {r! (1-t)^r w_1 \dots w_r} \\ \times \prod_{k \ne l} \frac{w_k - w_l}{w_k - t w_l} \prod_{s=1}^r \left( \prod_{i \ge 1} \frac{1 + w_s x_i}{1 + q w_s x_i} \prod_{j \ge 1} \frac{1 + q^{-1} w_s^{-1} y_j}{1 + w_s^{-1} y_j} \right),
\end{multline}
\end{proposition}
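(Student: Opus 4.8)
The plan is to obtain \eqref{eq:form-var-mes-4} directly from \eqref{eq:form-var-mes-2} by applying the Macdonald involution $\omega_{q,t}^Y$ in the $Y$-variables to both sides. Since $\omega_{q,t}^Y$ is a continuous $\C$-algebra automorphism of $\bar\Lambda_Y$, acting as the identity on the $X$-variables and treating the formal variables $w_1,\dots,w_r$ as scalars, it sends a valid identity in $\bar\Lambda_X\otimes\bar\Lambda_Y$ to a valid identity; the task is then only to identify the image of each side.

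First I would treat the left-hand side. Because $\mathcal E_r(\la)\in\C$ and $Q_{\la'}(X;t,q)\in\bar\Lambda_X$ are untouched by $\omega_{q,t}^Y$, continuity lets me apply the involution termwise to the sum over $\la$, and the relation $\omega_{q,t}^Y Q_\la(Y;q,t)=P_{\la'}(Y;t,q)$ turns the left-hand side of \eqref{eq:form-var-mes-2} into $\sum_\la \mathcal E_r(\la)\,Q_{\la'}(X;t,q)\,P_{\la'}(Y;t,q)$, which is exactly the left-hand side of \eqref{eq:form-var-mes-4}.

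Next I would treat the right-hand side. Using that $\omega_{q,t}^Y$ is multiplicative, it factors through the product over the $Y$-dependent terms and leaves the purely $w$- and $X$-dependent factors (the prefactor $t^{r(r-1)/2}/(r!(1-t)^r w_1\cdots w_r)$, the product $\prod_{k\ne l}(w_k-w_l)/(w_k-tw_l)$, and the factors $\prod_i (1+w_sx_i)/(1+qw_sx_i)$) inert. For the prefactor $\prod_{i,j}(1+x_iy_j)$ I would use the involution property $\omega_{q,t}^Y\prod_{i,j}(1+x_iy_j)=\p(X,Y;t,q)$, producing the kernel $\Pi(X,Y;t,q)$. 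For each $s$, I would apply \eqref{eq:one-var-inv} in the $Y$-variables with $w$ replaced by $q^{-1}w_s^{-1}$, which gives $\omega_{q,t}^Y\prod_j (1-tq^{-1}w_s^{-1}y_j)/(1-q^{-1}w_s^{-1}y_j)=\prod_j (1+q^{-1}w_s^{-1}y_j)/(1+w_s^{-1}y_j)$. Assembling these pieces reproduces exactly the right-hand side of \eqref{eq:form-var-mes-4}.

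The only real subtlety, and the step I would check most carefully, is that $\omega_{q,t}^Y$ may be interchanged with the Laurent-coefficient extraction $[w_1^{-1}\cdots w_r^{-1}]$ and with the expansions prescribed by convention \eqref{eq:Laur-conv}. Concretely, the right-hand side of \eqref{eq:form-var-mes-2} is a formal Laurent series in $w_1,\dots,w_r$ whose coefficients lie in $\bar\Lambda_X\otimes\bar\Lambda_Y$, and one must verify that $\omega_{q,t}^Y$ acts coefficientwise on this series, so that applying the involution and then extracting the $w_1^{-1}\cdots w_r^{-1}$ coefficient commute. This follows from the continuity of $\omega_{q,t}^Y$ in the graded topology together with the fact that both sides of \eqref{eq:form-var-mes-2} already lie in $\bar\Lambda_X\otimes\bar\Lambda_Y$; it is the same justification already used to pass from \eqref{eq:form-var-mes-1} to \eqref{eq:form-var-mes-2} and \eqref{eq:form-var-mes-3}, so no new difficulty arises.
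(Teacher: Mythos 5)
Your proposal is correct and follows exactly the paper's route: the paper obtains \eqref{eq:form-var-mes-4} precisely by applying $\omega_{q,t}^Y$ to \eqref{eq:form-var-mes-2}, using $\omega_{q,t}^Y Q_{\la}(Y;q,t)=P_{\la'}(Y;t,q)$, the kernel identity $\omega_{q,t}^Y\prod_{i,j}(1+x_iy_j)=\p(X,Y;t,q)$, and \eqref{eq:one-var-inv} with $w$ replaced by $q^{-1}w_s^{-1}$, leaving the $w$- and $X$-dependent factors untouched. Your additional remark about interchanging the involution with coefficient extraction in the $w$-variables is a sound justification of a step the paper treats as immediate.
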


\subsection{Observables of mixed and Hall-Littlewood measures}
\label{sec:obs-mes}

The \textit{Hall-Littlewood function} is the Macdonald function $P_{\la} (X; q,t)$ with parameter $q$ set to $0$. Similarly, the \textit{q-Whittaker function} is the Macdonald function $P_{\la} (X; q,t)$ with parameter $t$ set to $0$. When one of the parameters is set to $0$, the observables computed in the previous subsection simplify and provide a significant amount of information about the corresponding measures.

Note that as $t \to 0$ we have
$$
\mathcal{E}_r (\la) = q^{-\la_1 - \la_2 - \dots - \la_r} t^{r(r-1)/2} + o \left( t^{r(r-1)/2} \right).
$$
$$
P_{\la} (X;q,t) = P_{\la} (X;q,0) + o(1), \qquad Q_{\la} (Y;q,t) = Q_{\la} (Y;q,0) + o(1)
$$

Therefore, after dividing both sides of equations \eqref{eq:form-var-mes-1} -- \eqref{eq:form-var-mes-4} by $t^{r(r-1)/2}$ and taking the limit $t \to 0$, we obtain the following statement.

\begin{proposition}
\label{prop:meas-ttt}
We have
\begin{multline}
\label{eq:form-var-mes-11}
\sum_{\la \in \mathbb Y} q^{-\la_1 - \la_2 - \dots - \la_r} P_{\la} (X; q,0) Q_{\la} (Y;q,0) = \p (X,Y;q,0) [ w_1^{-1} \dots w_r^{-1} ] \frac{ (-1)^{r(r-1)/2} \prod_{k<l} (w_k-w_l)^2} {r! w_1^r \dots w_r^r} \\ \times \prod_{s=1}^r \left( \prod_{i \ge 1} \frac{1}{1-w_s x_i} \prod_{j \ge 1} \frac{1}{1- q^{-1} w_s^{-1} y_j} \right),
\end{multline}
\begin{multline}
\label{eq:form-var-mes-22}
\sum_{\la \in \mathbb Y} q^{-\la_1 - \la_2 - \dots - \la_r} Q_{\la'} (X; 0,q) Q_{\la} (Y;q,0) = \prod_{i,j \ge 1} \left(1 + x_i y_j \right) [ w_1^{-1} \dots w_r^{-1} ] \frac{ (-1)^{r(r-1)/2} \prod_{k<l} (w_k-w_l)^2} {r! w_1^r \dots w_r^r} \\ \times \prod_{s=1}^r \left( \prod_{i \ge 1} \frac{1 + w_s x_i}{1 + q w_s x_i} \prod_{j \ge 1} \frac{1}{1- q^{-1} w_s^{-1} y_j} \right),
\end{multline}
\begin{multline}
\label{eq:form-var-mes-33}
\sum_{\la \in \mathbb Y} q^{-\la_1 - \la_2 - \dots - \la_r} P_{\la} (X; q,0) P_{\la'} (Y;0,q) = \prod_{i,j \ge 1} \left(1 + x_i y_j \right) [ w_1^{-1} \dots w_r^{-1} ] \frac{ (-1)^{r(r-1)/2} \prod_{k<l} (w_k-w_l)^2} {r! w_1^r \dots w_r^r} \\ \times \prod_{s=1}^r \left( \prod_{i \ge 1} \frac{1}{1-w_s x_i} \prod_{j \ge 1} \frac{1 + q^{-1} w_s^{-1} y_j}{1 + w_s^{-1} y_j} \right),
\end{multline}
\begin{multline}
\label{eq:form-var-mes-44}
\sum_{\la \in \mathbb Y} q^{-\la_1 - \la_2 - \dots - \la_r} Q_{\la'} (X; 0,q) P_{\la'} (Y;0,q) = \p (X,Y;0,q) [ w_1^{-1} \dots w_r^{-1} ] \frac{ (-1)^{r(r-1)/2} \prod_{k<l} (w_k-w_l)^2} {r! w_1^r \dots w_r^r} \\ \times \prod_{s=1}^r \left( \prod_{i \ge 1} \frac{1 + w_s x_i}{1 + q w_s x_i} \prod_{j \ge 1} \frac{1 + q^{-1} w_s^{-1} y_j}{1 + w_s^{-1} y_j} \right),
\end{multline}

\end{proposition}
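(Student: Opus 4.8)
The plan is to obtain all four identities \eqref{eq:form-var-mes-11}--\eqref{eq:form-var-mes-44} uniformly as the $t\to 0$ degeneration of the four preceding propositions \eqref{eq:form-var-mes-1}--\eqref{eq:form-var-mes-4}. Concretely, I would divide both sides of each of those equations by $t^{r(r-1)/2}$ and then let $t\to 0$, reading off the Hall-Littlewood statement from the limit. The two asymptotic expansions quoted just above the proposition --- namely $\mathcal E_r(\la) = q^{-\la_1-\dots-\la_r}\,t^{r(r-1)/2} + o\bigl(t^{r(r-1)/2}\bigr)$ together with $P_\la(X;q,t)=P_\la(X;q,0)+o(1)$ and $Q_\la(Y;q,t)=Q_\la(Y;q,0)+o(1)$ --- are exactly the input needed on the left-hand sides.

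First I would treat the left-hand sides. After dividing by $t^{r(r-1)/2}$, the factor $\mathcal E_r(\la)/t^{r(r-1)/2}$ tends to $q^{-\la_1-\dots-\la_r}$, while the Macdonald factors converge to their Hall-Littlewood ($t=0$) specializations; for \eqref{eq:form-var-mes-2}--\eqref{eq:form-var-mes-4} the same holds after the involutions, using $Q_{\la'}(X;t,q)\to Q_{\la'}(X;0,q)$ and $P_{\la'}(Y;t,q)\to P_{\la'}(Y;0,q)$. To pass the limit through the a priori infinite sum over $\la$, I would work in the graded topology of $\bar\Lambda_X\otimes\bar\Lambda_Y$: since $P_\la,Q_\la$ are homogeneous of degree $|\la|$, each fixed bidegree receives contributions from only finitely many partitions, so the sum is finite in every graded component and the termwise limit is legitimate.

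Next I would treat the right-hand sides. The explicit prefactor $t^{r(r-1)/2}$ cancels against the normalization, $(1-t)^{-r}\to 1$, and $\p(X,Y;q,t)\to\p(X,Y;q,0)$ (resp. $\p(X,Y;t,q)\to\p(X,Y;0,q)$). The cross term simplifies: writing $\frac{1-w_k^{-1}w_l}{1-tw_k^{-1}w_l}=\frac{w_k-w_l}{w_k-tw_l}$, each factor tends to $1-w_k^{-1}w_l$, so $\prod_{k\ne l}\frac{w_k-w_l}{w_k-tw_l}\to\prod_{k\ne l}\bigl(1-w_k^{-1}w_l\bigr)$. The algebraic identity I would then record is
\[
\frac{1}{w_1\cdots w_r}\prod_{k\ne l}\bigl(1-w_k^{-1}w_l\bigr)=\frac{(-1)^{r(r-1)/2}\prod_{k<l}(w_k-w_l)^2}{w_1^r\cdots w_r^r},
\]
which follows from $\prod_{k\ne l}(w_k-w_l)=(-1)^{r(r-1)/2}\prod_{k<l}(w_k-w_l)^2$ together with $\prod_{k\ne l}w_k^{-1}=(w_1\cdots w_r)^{-(r-1)}$. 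The remaining single-variable factors degenerate one by one: e.g. $\frac{1-tw_s x_i}{1-w_s x_i}\to\frac{1}{1-w_s x_i}$ and $\frac{1-tq^{-1}w_s^{-1}y_j}{1-q^{-1}w_s^{-1}y_j}\to\frac{1}{1-q^{-1}w_s^{-1}y_j}$ for \eqref{eq:form-var-mes-11}, whereas the $t$-independent factors such as $\frac{1+w_s x_i}{1+qw_s x_i}$ and $\frac{1+q^{-1}w_s^{-1}y_j}{1+w_s^{-1}y_j}$ pass through unchanged, yielding \eqref{eq:form-var-mes-22}--\eqref{eq:form-var-mes-44} according to which involutions were applied.

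The main obstacle is the justification that the $t\to 0$ limit commutes with the coefficient extraction $[w_1^{-1}\cdots w_r^{-1}]$ on the right and, simultaneously, with the graded summation on the left. For the coefficient extraction I would invoke the expansion convention fixed in the Remark: each factor $\frac{1}{1-tw_k^{-1}w_l}$ is expanded as $\sum_{a\ge 0}(tw_k^{-1}w_l)^a$, so the full integrand is a Laurent series whose coefficients are polynomials in $t$; in every fixed bidegree in $(X,Y)$ only finitely many $w$-monomials contribute to the target coefficient, and on these the limit is taken term by term, killing precisely the positive powers of $t$. Thus the interchange of limit and coefficient extraction reduces to a finite computation in each graded component, and matching the two sides degree by degree establishes all four identities at once.
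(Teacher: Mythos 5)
Your proposal is correct and takes essentially the same route as the paper, whose proof is exactly this: divide both sides of \eqref{eq:form-var-mes-1}--\eqref{eq:form-var-mes-4} by $t^{r(r-1)/2}$ and let $t \to 0$, using the displayed asymptotics of $\mathcal{E}_r(\la)$, $P_{\la}$, and $Q_{\la}$. One small imprecision: the $w$-coefficients of the cross factors $\prod_{k \ne l}\bigl(1 - t w_k^{-1} w_l\bigr)^{-1}$ are power series in $t$ rather than polynomials (monomials such as $(w_k^{-1}w_l)^a (w_l^{-1}w_k)^a$ cancel), but your interchange argument survives because the $t \to 0$ limit still just extracts the $t$-constant term in each fixed graded component.
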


Equation \eqref{eq:form-var-mes-11} was derived in \cite{BC}. Equations \eqref{eq:form-var-mes-22}--\eqref{eq:form-var-mes-44} seem to be published for the first time\footnote{ After the completion of this project, we have learned that equation \eqref{eq:form-var-mes-44} was present in unpublished notes of Shamil Shakirov.}.

Let us now reformulate Proposition \ref{prop:meas-ttt} in a probabilistic setting.

Assume that $x_1, \dots, x_M$ and $y_1, \dots, y_N$ are non-negative reals such that $x_i y_j <1$, for $1 \le i \le M$, $1 \le j \le N$. Then the corresponding \textit{Hall-Littlewood probability measure} is given by
\begin{equation}
\label{eq:HL-meas-def}
\mathbb{HL} (\la) := \prod_{i=1}^M \prod_{j = 1}^N \frac{1- x_i y_j}{1 - t x_i y_j} P_{\la} (x_1, \dots, x_M;0,t) Q_{\la} (y_1, \dots, y_N;0,t), \qquad \la \in \Y.
\end{equation}
Similarly, we can define the corresponding \textit{mixed} Hall-Littlewood / q-Whittaker probability measure via
\begin{equation}
\label{eq:mixed-meas-def}
\mathbb{WHL} (\la) := \prod_{i=1}^M \prod_{j=1}^N (1+x_i y_j)^{-1} P_{\la} (x_1, \dots, x_M;q,0) P_{\la'} (y_1, \dots, y_N;0,q), \qquad \la \in \Y.
\end{equation}
One of the properties of the Macdonald functions (see \cite[Chapter 6, Equation (5.3)]{M}) implies that we also have
\begin{equation}
\label{eq:transpos-mix}
\mathbb{WHL} (\la) = \prod_{i=1}^M \prod_{j=1}^N (1+x_i y_j)^{-1} Q_{\la} (x_1, \dots, x_M;q,0) Q_{\la'} (y_1, \dots, y_N;0,q), \qquad \la \in \Y.
\end{equation}

\begin{proposition}
\label{prop:qWHLandHL}

a) Assume that $\la$ is a random partition distributed according to the mixed measure \eqref{eq:mixed-meas-def}. We have
\begin{multline}
\label{eq:mix-meas-obs}
\langle q^{-\la_1 - \dots - \la_r} \rangle_{\mathbb{WHL}} = 
\frac{1}{(2 \pi \ii)^r} \oint_{w_1 \in \mathcal C} \dots \oint_{w_r \in \mathcal C} \frac{(-1)^{r(r-1)/2} \prod_{k <l} (w_k-w_l)^2} {r! w_1^r \dots w_r^r} \\ \times \prod_{s=1}^r \left( \prod_{i \ge 1} \frac{1}{1-w_s x_i} \prod_{j \ge 1} \frac{w_s + q^{-1} y_j}{w_s + y_j} \right) d w_1 \dots d w_r,
\end{multline}
where the contour of integration $\mathcal C$ (common for all variables) encircles $(-y_j)$'s, $0$, 
and no other poles of the integrand.

b) Assume that $\la$ is a random partition distributed according to the Hall-Littlewood measure \eqref{eq:HL-meas-def}. We have
\begin{multline}
\label{eq:HL-meas-obs}
\langle t^{-\la'_1 - \dots - \la'_r} \rangle_{\mathbb{HL}} = \frac{1}{(2 \pi \ii)^r} \oint_{w_1 \in \mathcal C} \dots \oint_{w_r \in \mathcal C} \frac{(-1)^{r(r-1)/2} \prod_{k <l} (w_k-w_l)^2} {r! w_1^r \dots w_r^r} \\ \times \prod_{s=1}^r \left( \prod_{i \ge 1} \frac{1 + w_s x_i}{1 + t w_s x_i} \prod_{j \ge 1} \frac{w_s + t^{-1} y_j}{w_s + y_j} \right) d w_1 \dots d w_r,
\end{multline}
where the contour of integration (common for all variables) encircles $(-y_j)$'s, $0$, and no other poles of the integrand.
\end{proposition}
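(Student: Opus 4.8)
The plan is to obtain both identities as finite-variable specializations of the formulas in Proposition~\ref{prop:meas-ttt}, after which the only substantive work is to rewrite the coefficient extraction $[w_1^{-1}\cdots w_r^{-1}]$ as an $r$-fold contour integral over the correct contour.

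For part (a) I would specialize the two alphabets in \eqref{eq:form-var-mes-33} to $x_1,\dots,x_M$ and $y_1,\dots,y_N$ (all other variables set to $0$). By the definition \eqref{eq:mixed-meas-def},
$$\langle q^{-\la_1-\dots-\la_r}\rangle_{\mathbb{WHL}}=\prod_{i=1}^M\prod_{j=1}^N(1+x_iy_j)^{-1}\sum_{\la}q^{-\la_1-\dots-\la_r}P_\la(x;q,0)\,P_{\la'}(y;0,q),$$
and the factor $\prod_{i,j}(1+x_iy_j)$ produced on the right-hand side of \eqref{eq:form-var-mes-33} cancels the normalization exactly (indeed \eqref{eq:form-var-mes-33} at $r=0$ is precisely the statement that $\mathbb{WHL}$ has total mass $1$). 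What remains is $[w_1^{-1}\cdots w_r^{-1}]$ applied to the integrand of \eqref{eq:form-var-mes-33}; rewriting $\tfrac{1+q^{-1}w_s^{-1}y_j}{1+w_s^{-1}y_j}=\tfrac{w_s+q^{-1}y_j}{w_s+y_j}$ turns this into the integrand of \eqref{eq:mix-meas-obs}.

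For part (b) the observable is indexed by the conjugate partition, so one relabeling is needed. I would take \eqref{eq:form-var-mes-44} with $q$ replaced by $t$ and the alphabets specialized to $x$ and $y$, which reads $\sum_\la t^{-\la_1-\dots-\la_r}Q_{\la'}(x;0,t)P_{\la'}(y;0,t)=\p(x,y;0,t)\,[w_1^{-1}\cdots w_r^{-1}](\cdots)$. Reindexing the sum by $\mu=\la'$ (an involution of $\Y$) converts the observable to $t^{-\mu'_1-\dots-\mu'_r}$ and the functions to $Q_\mu(x;0,t)P_\mu(y;0,t)$. Since $Q_\mu(\cdot;0,t)$ is a scalar multiple of $P_\mu(\cdot;0,t)$ with the scalar depending only on $\mu$ and $t$ (\cite[Ch.~VI]{M}), we have $Q_\mu(x;0,t)P_\mu(y;0,t)=P_\mu(x;0,t)Q_\mu(y;0,t)$, which is exactly the weight in \eqref{eq:HL-meas-def}. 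As $\prod_{i,j}\tfrac{1-x_iy_j}{1-tx_iy_j}=\p(x,y;0,t)^{-1}$, the prefactor $\p(x,y;0,t)$ cancels, and after $\tfrac{1+t^{-1}w_s^{-1}y_j}{1+w_s^{-1}y_j}=\tfrac{w_s+t^{-1}y_j}{w_s+y_j}$ we arrive at the integrand of \eqref{eq:HL-meas-obs}.

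The step that requires care, and which I expect to be the main (if modest) obstacle, is the passage from $[w_1^{-1}\cdots w_r^{-1}]$ to $\tfrac1{(2\pi\ii)^r}\oint_{\mathcal C}\cdots\oint_{\mathcal C}$. Because the limit $t\to0$ has already collapsed the cross terms into the polynomial $\prod_{k<l}(w_k-w_l)^2$, the $w_s$ decouple and each may be handled on its own. By the expansion convention \eqref{eq:Laur-conv}, the $x$-factors ($\tfrac1{1-w_sx_i}$, resp.\ $\tfrac{1+w_sx_i}{1+tw_sx_i}$) are developed in non-negative powers of $w_s$ while the $y$-factors $\tfrac{w_s+q^{-1}y_j}{w_s+y_j}$ (resp.\ $\tfrac{w_s+t^{-1}y_j}{w_s+y_j}$) are developed in non-negative powers of $w_s^{-1}$; the common annulus of convergence has inner radius $\max_j|y_j|$ (set by the poles $-y_j$) and outer radius set by the $x$-poles, and is nonempty because $x_iy_j<1$ and $t<1$. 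For a positively oriented $\mathcal C$ in this annulus one has $[w^{-1}]F=\tfrac1{2\pi\ii}\oint_{\mathcal C}F\,dw$, and such a $\mathcal C$ encircles $0$ and the poles at $-y_j$ while excluding the $x$-poles --- exactly the contour prescribed in the statement. All remaining manipulations are immediate algebraic rewritings.
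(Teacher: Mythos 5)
Your proof is correct and follows essentially the same route as the paper: both parts come from specializing the $t\to 0$ formulas \eqref{eq:form-var-mes-33} and \eqref{eq:form-var-mes-44} (the latter with $q$ renamed to $t$) to finitely many variables, cancelling the normalization against the prefactor, and converting the coefficient extraction $[w_1^{-1}\cdots w_r^{-1}]$ into a contour integral whose contour is dictated by the Laurent expansion conventions. The details you spell out --- the reindexing $\mu=\la'$ together with $Q_\mu(x;0,t)P_\mu(y;0,t)=P_\mu(x;0,t)Q_\mu(y;0,t)$, and the explicit annulus argument showing the common contour encircles $0$ and the $(-y_j)$'s but no $x$-poles --- are precisely the steps the paper leaves implicit.
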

\begin{proof}
a) 
The equality is a corollary of \eqref{eq:form-var-mes-3}; the conditions on contours are obtained from a specific way how we expand the right-hand side of \eqref{eq:form-var-mes-3} into a Laurent series.

b) This statement follows from \eqref{eq:form-var-mes-4}; we replaced $q$ from \eqref{eq:form-var-mes-4} into $t$, since it is conventional to denote the parameter of a Hall-Littlewood function as $t$. Again, the conditions on contours are obtained from a specific way how we expand the right-hand side of \eqref{eq:form-var-mes-4} into a Laurent series.

\end{proof}

\begin{remark}
Equation \eqref{eq:form-var-mes-11} (which deals with q-Whittaker measures defined analogous to Hall-Littlewood measures) was actively used in \cite{BC} and subsequently in \cite{BCF}, \cite{BCR}, and many other works for an asymptotic analysis of a number of probabilistic models. Formulas \eqref{eq:form-var-mes-22}--\eqref{eq:form-var-mes-44} as well as \eqref{eq:HL-meas-obs}, \eqref{eq:mix-meas-obs} might allow a similar analysis of several new models. Also, as we will see in Section \ref{sec:degen}, Hall-Littlewood processes allow to obtain many formulas for observables of ASEP and stochastic six-vertex model. These provides a new approach to the formulas obtained for ASEP in \cite{TW1}, \cite{TW2}, see also \cite{BCS}, and for a stochastic six vertex model in \cite{BCG} and \cite{BP2}.


\end{remark}

\begin{remark}
\label{rem:mixed}
It is interesting to note that while the probabilistic models related to q-Whittaker measures and
and to Hall-Littlewood measures are already widely present in the literature, the case of a mixed measure seems to lead to new probabilistic models.

In the Schur case (($q=t=0$) one of interesting examples of the mixed measure is given by the domino tilings of the Aztec diamond. In the case of nontrivial $q$ we obtain a new $q$-deformation of the measure on the set of domino tilings that leads to a mixed measure \eqref{eq:mixed-meas-def}. Using \eqref{eq:mix-meas-obs} it might be possible to obtain KPZ asymptotics of this measure (we do not address the asymptotics in the present paper; but a number of similar computations were performed in recent years, see \cite{BC}, \cite{BCF}, \cite{BCR}).
\end{remark}

\subsection{Observables of formal Macdonald processes}

Using the same idea as in Section \ref{sec:obs-mes} one can obtain much more general observables of Hall-Littlewood processes. In order to formulate the results, let us introduce one more piece of notation. For two sets of variables $U$ and $V$ set
\begin{equation}
W(U;V) := \prod_{i,j} \frac{ (1 - t u_i v_j) (1 - q u_i v_j)}{(1 - u_i v_j) (1 - q t u_i v_j)},
\end{equation}
which is understood as a formal power series in nonnegative powers of $(u_i v_j)$.

We will use $2 N$ countable sets of formal variables $\mathbf X = (X_1, \dots, X_N)$, $\mathbf Y = (Y_1, \dots, Y_N)$, where $X_a = (x_{1;a}, x_{2,a}, \dots)$, $Y_a = (y_{1;a}, y_{2,a}, \dots)$.

The following result was proved in \cite[Theorem 3.10 and Corollary 3.11]{BCGS}.

\begin{proposition}
\label{prop:gen-Mac-form}
Let $\{ \mm_{\mathbf X, \mathbf Y} (\la^{1}, \dots, \la^{N}) \}_{\la^{i} \in \Y; 1\le i \le N}$ be a formal Macdonald process, let $0<M \le N$, and let $r_1, \dots, r_M$ be $M$ positive integers. For $1 \le m \le N$ set $V^m := \{v_{1;m}, \dots, v_{r_m;m} \}$, and define
$$
DV^m := \frac{t^{r_m (r_m-1)/2}}{r_m ! (1-t)^{r_m} v_{1;m} \dots v_{r_m;m}} \prod_{1 \le i \ne j \le r_m} \frac{ 1 - v_{i;m}^{-1} v_{j;m}}{ 1 - t v_{i;m}^{-1} v_{j;m}}
$$
We have
\begin{multline}
\sum_{\la^1, \dots, \la^N} \mathcal{E}_{r_1} (\la^1) \dots \mathcal{E}_{r_M} (\la^M) \mm_{\mathbf X, \mathbf Y} \left(\la^{1}, \dots, \la^{N}\right) = [ v_{1;1}^{-1} \dots v_{r_1;1}^{-1} v_{1;2}^{-1} \dots v_{1;M}^{-1} \dots v_{r_M; M}^{-1} ] \prod_{m=1}^M DV^m
\\ \times \prod_{1 \le \alpha < \beta \le M} \left( \prod_{s=1}^{r_{\beta}} \prod_{i \ge 1} \frac{1- t v_{s;\beta} x_{i;\alpha}}{1-v_{s;\beta} x_{i;\alpha}} \right) \left( \prod_{s=1}^{r_{\alpha}} \prod_{j \ge 1} \frac{1- t q^{-1} v_{s;\alpha}^{-1} y_{j;\beta} }{1- q^{-1} v_{s;\alpha}^{-1} y_{j;\beta}} \right) W ( (qV^{\alpha})^{-1};V^{\beta}),
\end{multline}
where for a set of variables $V = \{v_1, \dots, v_r \}$, $(q V)^{-1}$ stands for the set $\{(q v_1)^{-1}, \dots, (q v_r)^{-1} \}$.
\end{proposition}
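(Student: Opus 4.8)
The plan is to run the standard Macdonald-operator machinery, but now at several levels of the process simultaneously, and to reduce everything to the single-measure computation of Proposition~\ref{prop:obs-meas}. The engine is the eigenrelation: the quantity $\mathcal E_r(\la)$ is precisely the eigenvalue of a Macdonald difference operator $D_r$ (in the incarnation built from the $q^{-1}$-shift $T_{q^{-1},x_i}\colon x_i\mapsto q^{-1}x_i$) acting on $P_\la$, so that $D_r\,P_\la(X)=\mathcal E_r(\la)\,P_\la(X)$. Consequently, inserting the factor $\prod_{m=1}^M\mathcal E_{r_m}(\la^m)$ into the sum defining the observable is the same as applying one operator $D_{r_m}$ per operator-level to the kernel of the process and then resumming over all partitions. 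The contour/Laurent representation of a single $D_r$ acting on a Cauchy kernel --- which is exactly the content of Proposition~\ref{prop:obs-meas} --- is what introduces the auxiliary variables $V^m=\{v_{1;m},\dots,v_{r_m;m}\}$, the single-level factor $DV^m$, and the coefficient-extraction $[v_{1;m}^{-1}\cdots v_{r_m;m}^{-1}]$.

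First I would dispose of the levels $M+1,\dots,N$ that carry no operator. Summing the process weight $\mm_{\mathbf X,\mathbf Y}$ over $\la^{M+1},\dots,\la^N$ and using the branching rule $\sum_\mu P_{\la/\mu}(X)P_\mu(Y)=P_\la(X,Y)$ together with the Cauchy identity \eqref{eq:Cauchy-simp} collapses the tail of the chain into augmented specializations and reproduces the matching normalization $\prod\p(X^i;Y^j)$. This leaves a genuine $M$-level process in which every level carries an operator. I would then induct on $M$: applying $D_{r_M}$ at the top operator-level through the eigenrelation and commuting its shifts downward past the intermediate kernels $\Psi_{\la^{m+1},\la^m}$, I can read off --- from the single-measure identity applied level by level --- exactly how the new variables couple to the specializations. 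The shift acting on the kernels $\p(X_\alpha;\,\cdot\,)$ for $\alpha<\beta$ produces $\prod_s\prod_i\frac{1-tv_{s;\beta}x_{i;\alpha}}{1-v_{s;\beta}x_{i;\alpha}}$, while the dual shifts acting on $\p(\,\cdot\,;Y_\beta)$ produce $\prod_s\prod_j\frac{1-tq^{-1}v_{s;\alpha}^{-1}y_{j;\beta}}{1-q^{-1}v_{s;\alpha}^{-1}y_{j;\beta}}$; these match the single-level specialization factors already visible in \eqref{eq:form-var-mes-1}.

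The genuinely new phenomenon, and the step I expect to be the main obstacle, is the pairwise cross-level interaction $W((qV^\alpha)^{-1};V^\beta)$. After the branching sums are resolved, the operators at two distinct levels $\alpha<\beta$ no longer act on disjoint variable sets: the auxiliary families $V^\alpha$ and $V^\beta$ become coupled through the shared Cauchy kernels. Commuting the shift associated with $V^\alpha$ past the factors already generated by $V^\beta$ produces exactly the four-term ratio $\prod_{i,j}\frac{(1-tu_iv_j)(1-qu_iv_j)}{(1-u_iv_j)(1-qtu_iv_j)}$ with $u=(qV^\alpha)^{-1}$ and $v=V^\beta$ --- that is, the function $W$. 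I would formalize this as a commutation lemma at the level of the auxiliary integration variables, essentially the statement that the two generating operators satisfy $D_{r_\alpha}D_{r_\beta}=W(\cdots)\,D_{r_\beta}D_{r_\alpha}$ once restricted to their action on the chain of Cauchy kernels; extracting the precise argument $(qV^\alpha)^{-1}$, including the $q$-shift and the inversion, is the delicate bookkeeping here.

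Finally, I would attend to the formal and analytic care that this argument requires: that all the difference operators are well defined on the completed algebra $\bar\Lambda$, that the orders of summation, operator application, and coefficient extraction may be interchanged (convergence in the graded topology), and that the Laurent-series expansion is performed with the nested-contour convention described in the Remark following Proposition~\ref{prop:obs-meas}. Assembling the single-level factors $DV^m$, the self-interactions with the specializations, the cross-level factors $W((qV^\alpha)^{-1};V^\beta)$, and the global coefficient extraction then yields the claimed identity; the crux, as noted, is pinning down $W$ from the cross-level commutation.
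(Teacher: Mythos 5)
First, a point of order: the paper does not prove Proposition \ref{prop:gen-Mac-form} at all --- it is quoted from \cite[Theorem 3.10 and Corollary 3.11]{BCGS}. So your proposal cannot be compared with an in-paper argument; it has to be judged as a reconstruction of the proof in \cite{BCGS}. At the level of architecture your reconstruction is faithful to that proof: $\mathcal{E}_r$ is realized as the eigenvalue of a Macdonald difference operator built from $q^{-1}$-shifts, the observable-free levels $M+1,\dots,N$ are summed out by the skew Cauchy identity and the branching rule (which simply augments the $Y$-specializations), the operators are then applied one level at a time, and the coupling $W((qV^{\alpha})^{-1};V^{\beta})$ is produced when the shifts of the level-$\alpha$ operator hit the factors $\prod_{s}\prod_{i}\frac{1-tv_{s;\beta}x_{i}}{1-v_{s;\beta}x_{i}}$ already generated by the level-$\beta$ operator. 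This is the correct mechanism, and you correctly single it out as the crux.

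However, as a proof the proposal has a genuine gap exactly at that crux, and the way you propose to close it would not work as stated. The identity $D_{r_\alpha}D_{r_\beta}=W(\cdots)\,D_{r_\beta}D_{r_\alpha}$ is not a well-formed operator statement: $W$ depends on the auxiliary variables $V^{\alpha},V^{\beta}$, which have no existence outside the coefficient-extraction (or contour-integral) representation of each individual operator, so $W$ cannot appear as a scalar factor relating two compositions of operators. What is true --- and what has to be proved --- is a statement inside the joint coefficient extraction: applying the level-$\alpha$ operator to the product of Cauchy kernels times the integrand factor left behind by the level-$\beta$ operator reproduces the single-level factors of Proposition \ref{prop:obs-meas} multiplied by $W((qV^{\alpha})^{-1};V^{\beta})$, because the $q^{-1}$-shifts act on that leftover factor and the residues are evaluated at points determined by $V^{\alpha}$. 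A second, related omission: the eigenrelation cannot be invoked at level $m$ directly, since $\la^m$ enters the process weight only through the skew kernels $\Psi_{\la^{m+1},\la^m}$; one must first resum lower levels to make the accumulated polynomial $P_{\la^m}(X_1,\dots,X_m)$ appear, but those lower levels carry observables of their own, so the induction must interleave operator applications with partial resummations in a definite order. It is precisely this ordering that forces the coupling to come out as $W((qV^{\alpha})^{-1};V^{\beta})$ for $\alpha<\beta$ rather than some other pairing, and your sketch acknowledges this bookkeeping without supplying it; in \cite{BCGS} it constitutes the bulk of the proof.
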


Proposition \ref{prop:gen-Mac-form} claims an equality in $\bar \Lambda_{X_1} \otimes \dots \otimes \bar \Lambda_{X_N} \otimes \bar \Lambda_{Y_1} \otimes \dots \otimes \bar \Lambda_{Y_N}$. The application of $\omega_{q,t}$ in all these $2N$ sets of variables immediately gives the following proposition.

\begin{proposition}
\label{prop:gen-Mac-form-dual}
In notations of Proposition \ref{prop:gen-Mac-form} we have
\begin{multline}
\sum_{\la^1, \dots, \la^N} \mathcal{E}_{r_1} \left(\la^1\right) \dots \mathcal{E}_{r_M} \left(\la^M\right) \mm_{\mathbf X, \mathbf Y} \left(\la^{1}, \dots, \la^{N}; t,q \right) = [ v_{1;1}^{-1} \dots v_{r_1;1}^{-1} v_{1;2}^{-1} \dots v_{1;M}^{-1} \dots v_{r_M; M}^{-1} ] \prod_{m=1}^M DV^m
\\ \times \prod_{1 \le \alpha < \beta \le M} \left( \prod_{s=1}^{r_{\beta}} \prod_{i \ge 1} \frac{1 + v_{s;\beta} x_{i;\alpha}}{1 + q v_{s;\beta} x_{i;\alpha}} \right) \left( \prod_{s=1}^{r_{\alpha}} \prod_{j \ge 1} \frac{1 + q^{-1} v_{s;\alpha}^{-1} y_{j;\beta} }{1 + v_{s;\alpha}^{-1} y_{j;\beta}} \right) W ( (qV^{\alpha})^{-1};V^{\beta}),
\end{multline}

\end{proposition}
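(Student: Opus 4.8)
The plan is to obtain the statement from Proposition \ref{prop:gen-Mac-form} by applying the Macdonald involution simultaneously in all $2N$ families of variables, in exact parallel to the way the single-measure identities \eqref{eq:form-var-mes-2}--\eqref{eq:form-var-mes-4} were deduced from \eqref{eq:form-var-mes-1}. Both sides of Proposition \ref{prop:gen-Mac-form} are genuine elements of $\bar\Lambda_{X_1}\otimes\cdots\otimes\bar\Lambda_{X_N}\otimes\bar\Lambda_{Y_1}\otimes\cdots\otimes\bar\Lambda_{Y_N}$, so I may act on that identity by the automorphism $\Omega := \omega_{q,t}^{X_1}\otimes\cdots\otimes\omega_{q,t}^{X_N}\otimes\omega_{q,t}^{Y_1}\otimes\cdots\otimes\omega_{q,t}^{Y_N}$. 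Since $\Omega$ is an algebra automorphism that is continuous in the graded topology, it commutes with the graded-convergent sums and products defining both sides, and the identity is preserved.

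On the left-hand side the coefficients $\mathcal{E}_{r_m}(\la^m)$ depend only on $q,t$ and on the partitions $\la^m$, not on any of the symmetric-function variables; hence they are scalars for $\Omega$ and pass through untouched. What transforms is the formal process itself: using $\omega_{q,t}^{X}P_{\la}(X;q,t)=Q_{\la'}(X;t,q)$, $\omega_{q,t}^{Y}Q_{\la}(Y;q,t)=P_{\la'}(Y;t,q)$, the corresponding skew relations, and the transformation of each Cauchy factor $\p(X^i;Y^j;q,t)$ in the denominator, $\Omega$ sends $\mm_{\mathbf X,\mathbf Y}(\cdot\,;q,t)$ to the dual process with $q\leftrightarrow t$ and transposed partitions, which is precisely the object denoted $\mm_{\mathbf X,\mathbf Y}(\la^1,\dots,\la^N;t,q)$ in the claim. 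This step mirrors verbatim the passage from \eqref{eq:form-var-mes-1} to \eqref{eq:form-var-mes-4}, where $\mathcal{E}_r(\la)$ is kept while the Macdonald functions are dualized.

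On the right-hand side the situation is even simpler. The prefactors $\prod_{m=1}^{M}DV^m$, each kernel $W((qV^{\alpha})^{-1};V^{\beta})$, and the coefficient-extraction bracket $[v_{1;1}^{-1}\cdots v_{r_M;M}^{-1}]$ involve only the auxiliary variables $v_{s;m}$ together with $q,t$; they are constants for $\Omega$ and are left unchanged, and in addition $W(U;V)$ is manifestly symmetric under $q\leftrightarrow t$. The only factors that genuinely depend on the $X$- and $Y$-variables are the single-variable products, and on these I apply \eqref{eq:one-var-inv}: with $w=v_{s;\beta}$ one gets $\omega_{q,t}^{X_\alpha}\prod_{i}\frac{1-t v_{s;\beta}x_{i;\alpha}}{1-v_{s;\beta}x_{i;\alpha}}=\prod_i\frac{1+v_{s;\beta}x_{i;\alpha}}{1+q v_{s;\beta}x_{i;\alpha}}$, while with $w=q^{-1}v_{s;\alpha}^{-1}$ one gets $\omega_{q,t}^{Y_\beta}\prod_{j}\frac{1-t q^{-1}v_{s;\alpha}^{-1}y_{j;\beta}}{1-q^{-1}v_{s;\alpha}^{-1}y_{j;\beta}}=\prod_j\frac{1+q^{-1}v_{s;\alpha}^{-1}y_{j;\beta}}{1+v_{s;\alpha}^{-1}y_{j;\beta}}$. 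These are exactly the two products on the right-hand side of the asserted identity, so equating the $\Omega$-images of the two sides of Proposition \ref{prop:gen-Mac-form} yields the claim.

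I expect the only genuine points to watch to be the following. First, the legitimacy of applying $\Omega$ termwise to the graded-convergent expressions, which is standard once one records that $\omega_{q,t}$ is a graded-continuous automorphism of $\bar\Lambda$. Second, correctly identifying the variable playing the role of $w$ in \eqref{eq:one-var-inv} for the $Y$-products, where it is $q^{-1}v_{s;\alpha}^{-1}$ rather than $v_{s;\alpha}$, so that the $q$ in the denominator lands in the right place. Everything else is bookkeeping identical to the derivation of \eqref{eq:form-var-mes-2}--\eqref{eq:form-var-mes-4}, which is why the statement follows immediately.
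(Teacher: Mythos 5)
Your proposal is correct and matches the paper's own argument, which consists precisely of applying $\omega_{q,t}$ in all $2N$ sets of variables to Proposition \ref{prop:gen-Mac-form} and invoking \eqref{eq:one-var-inv} for the $X$- and $Y$-dependent factors. Your write-up merely makes explicit the bookkeeping (the $\mathcal{E}_{r_m}$ coefficients and all $v$-dependent factors, including $W$, being scalars for the involution; the choice $w=q^{-1}v_{s;\alpha}^{-1}$ in the $Y$-products) that the paper leaves implicit in the word ``immediately.''
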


By making $t \to 0$ transition, we get
\begin{theorem}
\label{prop:gen-HL-proc}
In notations of Proposition \ref{prop:gen-Mac-form} we have
\begin{multline}
\sum_{\la^1, \dots, \la^N} q^{-\la_1^1 - \dots - \la_{r_1}^1} \dots q^{-\la_1^M - \dots - \la_{r_M}^M} \mm_{\mathbf X, \mathbf Y} (\la^{1}, \dots, \la^{N}; 0,q) = [ v_{1;1}^{-1} \dots v_{r_1;1}^{-1} v_{1;2}^{-1} \dots v_{1;M}^{-1} \dots v_{r_M; M}^{-1} ] \\ \times \prod_{m=1}^M \frac{(-1)^{r_m(r_m-1)/2} \prod_{1 \le i<j \le r_m} (v_{i;r_m} - v_{j;r_m})^2 }{r_m ! v_{1;m}^{r_m} v_{2;m}^{r_m} \dots v_{r_m;m}^{r_m} }
\prod_{1 \le \alpha < \beta \le M} \left( \prod_{s=1}^{r_{\beta}} \prod_{i \ge 1} \frac{1 + v_{s;\beta} x_{i;\alpha}}{1 + q v_{s;\beta} x_{i;\alpha}} \right) \left( \prod_{s=1}^{r_{\alpha}} \prod_{j \ge 1} \frac{1 + q^{-1} v_{s;\alpha}^{-1} y_{j;\beta} }{1 + v_{s;\alpha}^{-1} y_{j;\beta}} \right)
\\ \times \prod_{i=1}^{r_{\alpha}} \prod_{j=1}^{r_{\beta}} \frac{1 - v_{i;\alpha}^{-1} v_{j;\beta}}{1 - q^{-1} v_{i;\alpha}^{-1} v_{j;\beta}},
\end{multline}

\end{theorem}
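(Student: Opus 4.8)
The plan is to obtain the theorem directly from Proposition \ref{prop:gen-Mac-form-dual} by sending the Macdonald parameter $t \to 0$ with $q$ held fixed, which is precisely the Hall-Littlewood specialization of the (dualized) process. First I would take the identity of Proposition \ref{prop:gen-Mac-form-dual}, valid as an equality in $\bar\Lambda_{X_1}\otimes\cdots\otimes\bar\Lambda_{Y_N}$ for every admissible $t$, and divide both sides by $\prod_{m=1}^M t^{r_m(r_m-1)/2}$. This normalization is forced by the asymptotics recorded in the excerpt: $\mathcal E_r(\la) = q^{-\la_1-\dots-\la_r}\,t^{r(r-1)/2} + o\!\left(t^{r(r-1)/2}\right)$ as $t\to 0$, the leading term coming from the choice $i_1=1,\dots,i_r=r$, so that $\mathcal E_{r_m}(\la^m)/t^{r_m(r_m-1)/2} \to q^{-\la^m_1-\dots-\la^m_{r_m}}$. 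Since $\mm_{\mathbf X,\mathbf Y}(\la^1,\dots,\la^N;t,q)\to \mm_{\mathbf X,\mathbf Y}(\la^1,\dots,\la^N;0,q)$ coefficientwise, the left-hand side tends to exactly the left-hand side claimed in the theorem.

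Next I would compute the $t\to 0$ limit of each factor on the right. The products $\frac{1+v_{s;\beta}x_{i;\alpha}}{1+qv_{s;\beta}x_{i;\alpha}}$ and $\frac{1+q^{-1}v_{s;\alpha}^{-1}y_{j;\beta}}{1+v_{s;\alpha}^{-1}y_{j;\beta}}$ carry no $t$ and pass through unchanged. For $DV^m$, after dividing by $t^{r_m(r_m-1)/2}$ the explicit power of $t$ cancels, $(1-t)^{r_m}\to 1$, and each $\frac{1-v_{i;m}^{-1}v_{j;m}}{1-t\,v_{i;m}^{-1}v_{j;m}}\to 1-v_{i;m}^{-1}v_{j;m}$, so the limit is $\frac{1}{r_m!\,v_{1;m}\cdots v_{r_m;m}}\prod_{1\le i\ne j\le r_m}(1-v_{i;m}^{-1}v_{j;m})$. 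The one genuinely algebraic step is to symmetrize this: pairing $(i,j)$ with $(j,i)$ and using $(1-v_i^{-1}v_j)(1-v_j^{-1}v_i) = -(v_i-v_j)^2/(v_iv_j)$, together with the count that each variable occurs in $r_m-1$ of the pairs, converts the double product into $\frac{(-1)^{r_m(r_m-1)/2}\prod_{1\le i<j\le r_m}(v_{i;m}-v_{j;m})^2}{r_m!\,v_{1;m}^{r_m}\cdots v_{r_m;m}^{r_m}}$, the claimed prefactor. Finally, from the definition of $W$ one reads off at $t=0$ that $W((qV^\alpha)^{-1};V^\beta)\to \prod_{i,j}\frac{1-q\cdot q^{-1}v_{i;\alpha}^{-1}v_{j;\beta}}{1-q^{-1}v_{i;\alpha}^{-1}v_{j;\beta}} = \prod_{i,j}\frac{1-v_{i;\alpha}^{-1}v_{j;\beta}}{1-q^{-1}v_{i;\alpha}^{-1}v_{j;\beta}}$, which is the last product in the statement.

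The step that needs care, and which I expect to be the main obstacle, is justifying that the $t\to 0$ limit commutes both with the extraction of the coefficient $[\,v_{1;1}^{-1}\cdots v_{r_M;M}^{-1}\,]$ and with the infinite summation over $(\la^1,\dots,\la^N)$. For the coefficient extraction the key point is that the convention \eqref{eq:Laur-conv} is stable under the limit: each geometric series $\frac{1}{1-t\,v_{i;m}^{-1}v_{j;m}}=\sum_{a\ge 0}t^a(v_{i;m}^{-1}v_{j;m})^a$ tends termwise to its $a=0$ term, so the coefficient of any fixed $v$-monomial is a rational function of $t$ regular at $t=0$ whose limiting value is obtained by setting $t=0$. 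For the summation, I would argue inside the completed algebra $\bar\Lambda$, where equality means equality of the coefficient of every fixed monomial in the $X,Y$ variables; for each such monomial only finitely many partitions contribute and each contributing weight is regular at $t=0$, so the passage to the limit is legitimate coefficientwise. This is the same mechanism used to pass from Proposition \ref{prop:obs-meas} to Proposition \ref{prop:meas-ttt}, so it suffices to observe that the argument there applies verbatim here.
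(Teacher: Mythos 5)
Your proposal is correct and follows essentially the same route as the paper: the paper obtains the theorem from Proposition \ref{prop:gen-Mac-form-dual} precisely by dividing by $\prod_{m}t^{r_m(r_m-1)/2}$ and letting $t\to 0$, using the asymptotics $\mathcal E_r(\la)=q^{-\la_1-\dots-\la_r}t^{r(r-1)/2}+o\left(t^{r(r-1)/2}\right)$, exactly as in the earlier passage from Proposition \ref{prop:obs-meas} to Proposition \ref{prop:meas-ttt}. Your factor-by-factor limits (including the identity $\left(1-v_i^{-1}v_j\right)\left(1-v_j^{-1}v_i\right)=-(v_i-v_j)^2/(v_iv_j)$ producing the squared Vandermonde prefactor, and the evaluation of $W((qV^{\alpha})^{-1};V^{\beta})$ at $t=0$) and your coefficientwise justification of the interchange of limits are all consistent with what the paper leaves implicit.
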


Note that similarly to the transition between \eqref{eq:form-var-mes-44} and \eqref{eq:HL-meas-obs}, Theorem \ref{prop:gen-HL-proc} can be interpreted as a statement about observables of first columns of Hall-Littlewood processes. We will not rewrite it in such a form since it is straightforward.

\begin{remark}
In particular, for a Hall-Littlewood measure \eqref{eq:HL-meas-def} Theorem \ref{prop:gen-HL-proc} allows to compute expectations of any product of expressions of the form $t^{-\la'_1 - \dots - \la'_r}$.
The knowledge of such observables provides a lot of information about the random signatures. In particular, for any fixed $r$ these observables completely determine the distribution of $\la'_1, \dots, \la'_r$ if we \textit{a priori} know that $\la'_1 \le N$ for a certain $N \in \Z_{\ge 0}$ (which is often the case). The reason for this is that we know all joint moments of random variables $t^{N-\la'_1}, t^{2N -\la'_1 - \la'_2}, \dots, t^{rN - \la'_1 - \dots - \la'_r} $ which take values between 0 and 1 (thus, they are determined by their moments).
\end{remark}

\section{Hall-Littlewood RSK algorithm}
\label{sec:RSK}

\subsection{Preliminaries}
\label{sec:3-prelim}

In the remaining sections we will deal only with the Hall-Littlewood polynomials (HL for short), so we shall denote them by $P_{\la}$, $Q_{\la}$, omitting the dependence on the Macdonald parameter $t$ (and $q=0$).

A \textit{signature} $\la$ is a finite non-increasing sequence of integers $\la_1 \ge \la_2 \ge \dots \ge \la_N$; $N$ is called the \textit{length} of $\la$. Let $|\la| := \la_1 + \dots + \la_N$. Denote by $\Sig_N$ the set of all signatures of length $N$. We will say that two signatures $\la \in \Sig_{N-1}$ and $\mu \in \Sig_N$ \textit{interlace} (notation $\la \prec \mu$) if $\mu_1 \ge \la_1 \ge \mu_2 \ge \dots \ge \la_{N-1} \ge \mu_N$. In a similar vein, we will say that two signatures $\la \in \Sig_{N}$ and $\mu \in \Sig_N$ \textit{interlace} (same notation $\la \prec \mu$) if $\mu_1 \ge \la_1 \ge \mu_2 \ge \dots \ge \la_{N-1} \ge \mu_N \ge \la_N$. For a signature $\la \in \Sig_{N}$ we will use the convention that $\la_{k} = -\infty$ for $k > N$ and $\lambda_{0} = +\infty$.

Note that any Young diagram $\la \in \Y$ for any $N \geq \la_{1}'$ can be naturally identified with a signature of length $N$ by adding $N-\la'_{1}$ zeros to the sequence of its row lengths. On the other hand, any signature with nonnegative parts can be naturally identified with a Young diagram. For the rest of the paper we will mostly use signatures of large length instead of Young diagrams.

Following \cite{M}, define 
\begin{equation}
\label{eq:def-psi}
\psi_{\mu/\la}: = 1_{\la \prec \mu} \cdot \prod_{1 \leq k \leq j \leq i: \ \mu_{j+1} < \la_{j} = \la_{k} < \mu_{k}} \left(1-t^{j-k+1}\right), \qquad \ \mu \in \Sig_i, \quad  \la \in \Sig_{i-1} \ \text{or} \ \Sig_{i},
\end{equation}
\begin{equation}
\label{eq:def-phi}
\phi_{\mu/\la}: = 1_{\la \prec \mu} \cdot \prod_{1 \leq k \leq j \leq i: \ \la_{j} < \mu_{j} = \mu_{k} < \la_{k-1}} \left(1-t^{j-k+1}\right), \qquad \ \mu \in \Sig_i, \quad  \la \in \Sig_{i-1} \ \text{or} \ \Sig_{i}.
\end{equation}
Also, if $\la \subset \mu$ are Young diagrams, then define $\psi_{\mu/\la}, \phi_{\mu/\la}$ by considering $\mu \in \Sig_{N}$ and $\la \in \Sig_{N-1}$, where $N-1$ is the number of non-zero rows in $\la$, and applying \eqref{eq:def-psi}, \eqref{eq:def-phi}.

\begin{remark}
\label{rem:YDvsS}
It is important for this definition (for Young diagrams) that we use signatures of different lengths. The usage of signatures of the same length will not give conventional Hall-Littlewood functions.
\end{remark}

For $\la \prec \mu$, $\la \in \Sig_k$, $\mu \in \Sig_N$ ($k < N$), combinatorial formulae for the skew Hall-Littlewood polynomials read
\begin{equation}
\label{eq:HLcombFormQ}
Q_{\mu / \la} (x_1, \dots, x_{N-k}) = \sum_{\la = \la^{(k)} \prec \la^{(k+1)} \prec \dots \prec \la^{(N-1)} \prec \la^{N} = \mu} \prod_{i=k+1}^N \phi_{ \la^{(i)} / \la^{(i-1)}} x_{i-k}^{|\la^{(i)}| - |\la^{(i-1)}|},  \quad \la^{(i)} \in \Sig_i,
\end{equation}
\begin{equation}
\label{eq:HLcombFormP}
P_{\mu / \la} (x_1, \dots, x_{N-k}) = \sum_{\la = \la^{(k)} \prec \la^{(k+1)} \prec \dots \prec \la^{(N-1)} \prec \la^{N} = \mu} \prod_{i=k+1}^N \psi_{ \la^{(i)} / \la^{(i-1)}} x_{i-k}^{|\la^{(i)}| - |\la^{(i-1)}|}, \quad \la^{(i)} \in \Sig_i.
\end{equation}

We will need the (generalised) Cauchy identity for skew HL functions (see \cite[Chapter 3.5]{M}). Typically, this identity is stated in terms of Young diagrams; however, we will use statements in terms of signatures. It is straightforward to check that these variations \eqref{eq:skew-Cauchy-1varA}-- \eqref{eq:skew-Cauchy-1varBB} are equivalent to the conventional generalised Cauchy identity (we will also give independent proofs of \eqref{eq:skew-Cauchy-1varA}-- \eqref{eq:skew-Cauchy-1varBB} in Section \ref{sec:bij-proof}).

The first statement is that for formal variables $x$,$y$ and for any $\mu \in \Sig_N$, $\nu \in \Sig_{N-1}$ one has
\begin{equation}
\label{eq:skew-Cauchy-1varA}
\frac{1-txy}{1-xy} \sum_{\la \in Sig_{N-1}: \la \prec \mu, \la \prec \nu} P_{\mu / \la} (x) Q_{\nu / \la} (y) = \sum_{\rho \in Sig_{N}: \mu \prec \rho, \nu \prec \rho} P_{\rho / \nu} (x) Q_{\rho / \mu} (y);
\end{equation}

Note that the summation on the right-hand side of \eqref{eq:skew-Cauchy-1varA} is infinite ($\rho_{1}$ can be arbitrarily large), while the summation on the left-hand side  is finite (since $\mu_{N} \leq \la_{N-1} \leq \la_{1} \leq \mu_{1}$). So for arbitrary $k,\ell \in \Z_{\ge 0}$ the coefficient of $x^k y^\ell$ is the same on both sides. Thus, \eqref{eq:skew-Cauchy-1varA} can be rewritten in an equivalent form:
\begin{equation}
\label{eq:skew-Cauchy-1varAA}
\sum_{r \ge 0} ( 1 - t 1_{r \ge 1}) \sum_{\la \in Sig_{N-1}: \la \prec \mu, \la \prec \nu, |\mu|- |\la| = k-r, |\nu|-|\la| = \ell-r} \phi_{\nu / \la} \psi_{\mu / \la} = \sum_{\rho \in Sig_{N}: \mu \prec \rho, \nu \prec \rho, |\rho|-|\nu|=k, |\rho|-|\mu| = \ell} \psi_{\rho / \nu} \phi_{\rho / \mu}.
\end{equation}
A minor variation of this formula states that for any $\mu \in \Sig_N$ and $\nu \in \Sig_{N}$ we have
\begin{equation}
\label{eq:skew-Cauchy-1varBB}
\sum_{\la \in Sig_{N}: \la \prec \mu, \la \prec \nu, |\mu|- |\la| = k, |\nu|-|\la| = \ell} \phi_{\nu / \la} \psi_{\mu / \la} = \sum_{\rho \in Sig_{N}: \mu \prec \rho, \nu \prec \rho, |\rho|-|\nu| = k, |\rho|-|\mu|=\ell} \psi_{\rho / \nu} \phi_{\rho / \mu}.
\end{equation}
Note that all summations in both \eqref{eq:skew-Cauchy-1varAA} and  \eqref{eq:skew-Cauchy-1varBB} are finite (for fixed $k$ and $\ell$).

\subsection{Significance of randomization. Signature/ Young diagrams variations of RSK}
\label{sec:RSKgoal}

The classical RSK (Robinson-Schensted-Knuth) algorithm was introduced in \cite{R}, \cite{She}, \cite{K}.
As was mentioned in the introduction, a {\it Fomin growth diagram} is a map $U: \Y \times \Y \times \Y \times \Z_{\ge 0} \mapsto \Y$, $(\la, \mu, \nu, r) \mapsto U^r( \la;\mu, \nu)$ which takes as an input a triple of Young diagrams $\la, \mu, \nu$ such that $\mu / \la$ and $\nu / \la$ are horizontal strips, and $r \in \Z_{\ge 0}$, and outputs a single Young diagram. This map has many remarkable properties. One of its most important applications is that it provides a bijective proof of the (generalized) skew Cauchy identity \eqref{eq:skew-Cauchy-1varAA} for the Schur case $t=0$ (the usage of signatures in \eqref{eq:skew-Cauchy-1varAA} in this case is equivalent to the usage of Young diagrams).

Our goal is to naturally generalize this map to the setting of Hall-Littlewood functions with $0 \le t<1$. It is not immediate to see how to properly generalize the most important properties of the classical RSK. In particular, for general $t$ the identity \eqref{eq:skew-Cauchy-1varAA} cannot be proved bijectively in a naive way. Though it still contains the same number of summands on both sides, the sets of weights are not  the same in general.

We believe that the ``correct'' way to get a bijective proof of \eqref{eq:skew-Cauchy-1varAA} is to make an output of the algorithm \textit{random}. Then an algorithm will be constructed by defining for each $\rho$ a nonnegative coefficient $U^r (\mu \to \rho \mid \la \to \nu)$, which is the probability that the output of the algorithm is $\rho$. Then we must have
$$
\sum_{\rho} U^r (\mu \to \rho \mid \la \to \nu) =1, \qquad \text{for any valid $\la, \mu, \nu, r$}.
$$
Such construction provides a ``bijective'' (in a quite natural sense) proof of the Cauchy identity, see Section \ref{sec:bij-proof}.

Random RSK algorithms for generalizations of Schur functions were constructed in \cite{OCP}, \cite{BP3} in a ``continuous time'' setting (in our notations one may interpret it as the special case with $|\nu|-|\la|=1$), and in \cite{MP} in a general ``discrete time'' setting (arbitrary $\nu,\la, \mu$) for the $q$-Whittaker functions. Also, a continuous time random RSK algorithm in the Hall-Littlewood case was constructed in \cite{BP}. In this section we present a ``discrete time'' RSK for the Hall-Littlewood functions.








We will consider two closely related yet slightly different settings. First, we will define a probabilistic algorithm (that is, coefficients $U (\mu \to \rho \mid \la \to \nu)$) for signatures $\la, \mu, \nu, \rho \in \Sig_N$ such that $|\nu|-|\la| = |\rho|-|\mu|$. It can be thought of as a bijective proof of a skew Cauchy identity in the form \eqref{eq:skew-Cauchy-1varBB}.

Next, for signatures $\la, \nu \in \Sig_{N-1}$, $\mu, \rho \in \Sig_N$ and an integer $r \ge 0$, we define coefficients $U^r (\mu \to \rho \mid \la \to \nu)$ which are related to a bijective proof of \eqref{eq:skew-Cauchy-1varAA}. It is this setting that directly generalises the classical setting of Young diagrams and Schur functions (see Remark \ref{rem:YDvsS}).

Both these variations of the HL-RSK algorithm seem to be important. The first setting ($\la, \nu, \mu, \rho \in \Sig_N$) can be more naturally described combinatorially, and it deals with arguably more natural modification of the Cauchy identity, which does not involve any extra functions (see \eqref{eq:skew-Cauchy-1varBB} and \eqref{eq:commut-op}). The second setting ($\la, \nu \in \Sig_{N-1}$, $\mu, \rho \in \Sig_N$, $r \ge 0$ or Young diagrams instead of signatures) is more classical and leads to integrable models of Section \ref{sec:degen}.

We define and study the HL-RSK in the first setting in Sections \ref{sec:descRSK} -- \ref{sec:1cDescr}, and then deduce all properties of the HL-RSK in the second setting in Section \ref{sec:rsk-input}.


\subsection{Description of the dynamics}
\label{sec:descRSK}

The main goal of this section is to define the probability $U(\mu \to \rho \mid \la \to \nu)$ for any fixed signatures $\la, \nu, \mu, \rho\in \Sig_N$ such that $\la \prec \nu, \mu \prec \rho$ and $|\nu| - |\la| = |\rho| - |\mu|$ \footnote{If these conditions on signatures are not satisfied, $U(\mu \to \rho \mid \la \to \nu)$ is set to be $0$.}, and discuss its immediate properties.

It will be convenient to think about an arbitrary signature $\kappa \in \Sig_N$ as about a collection of $N$ particles on $\mathbb{Z}$ with positions $\kappa_{N} \leq \cdots \leq \kappa_{1}$. Denote by $\kappa'_{k}$ the number of particles with positions $\geq k$. If $k > 0$ and $\kappa_{N} \geq 0$, then $\kappa'_{k}$ is the length of the $k$-th column of the Young diagram corresponding to $\kappa$.

Let $m = |\nu|-|\la|$ and $n = |\mu|-|\la|$. Fix a $[0, m] \times [0, n]$ rectangle on the square lattice $\mathbb{Z}^{2}$. Denote  by $V_{m, n}$ the set of its vertices, $|V_{m, n}| = (m+1)(n+1)$, and by $B_{m, n}$ the set of its boxes, $|B_{m, n}| = mn$. We will consider functions $\Lambda: \mathcal V \to \Sig_N$, such that
\begin{enumerate}

\item
$\mathcal V \subset V_{m,n}$, and all points of the form $(0,j)$, $j=0, 1,\ldots,n$, and $(i,0)$, $i=1, \ldots, m$, belong to $\mathcal V$.

\item
If $(I,J) \in \mathcal V$, then $(i,j) \in \mathcal V$ for all $i,j \in \Z_{\ge 0}$ such that $i \le I, j \le J$.

\item
$\Lambda(0, 0) = \la$, $\Lambda(m, 0) = \nu$, $\Lambda(0, n) = \mu$. 

\smallskip

\item
$\Lambda(i, j)$ is obtained from $\Lambda(i-1, j)$ by moving one particle by $+1$ (for $i>0$). Denote by $h_{i-1, j}$ the initial position of this particle.

\smallskip

\item
$\Lambda(i, j)$ is obtained from $\Lambda(i, j-1)$ by moving one particle by $+1$ (for $j>0$). Denote by $v_{i, j-1}$ the initial position of this particle.

\end{enumerate}

We will call such a function $\Lambda$ {\it admissible} if

\begin{enumerate}
\item
$h_{i, j} < h_{i+1, j}$ for all $i, j \geq 0$, $(i+2,j) \in \mathcal V$, and $v_{i, j} < v_{i, j+1}$ for all $i, j \geq 0$, $(i,j+2) \in \mathcal V$.

\smallskip

\item
If $h_{i, j} = v_{i, j} = k$, then $h_{i, j+1} = v_{i+1, j} = k$ or $k+1$ (for all $i, j \geq 0$, $(i+1, j+1) \in \mathcal V$).
\end{enumerate}

An example of an admissible function is given in Figure \ref{fig:example-admF1}.

\begin{figure}
\begin{tikzpicture}[>=stealth,scale=0.7]
\node at (0,0) {$(3,3,1,0)$};
\draw (1.5,0) -- (3.5,0); \node at (2.5,-0.4) {$1$};
\node at (5,0) {$(3,3,2,0)$};
\draw (6.5,0) -- (8.5,0); \node at (7.5,-0.4) {$3$};
\node at (10,0) {$(4,3,2,0)$};
\draw (11.5,0) -- (13.5,0); \node at (12.5,-0.4) {$4$};
\node at (15,0) {$(5,3,2,0)$};
\draw (0,0.5) -- (0,2.5); \draw (5,0.5) -- (5,2.5); \draw (10,0.5) -- (10,2.5); \draw (15,0.5) -- (15,2.5);
\node at (-0.3,1.5) {$0$}; \node at (4.7,1.5) {$0$}; \node at (9.7,1.5) {$0$}; \node at (14.7,1.5) {$0$};
\node at (0,3) {$(3,3,1,1)$};
\draw (1.5,3) -- (3.5,3); \node at (2.5,2.6) {$1$};
\node at (5,3) {$(3,3,2,1)$};
\draw (6.5,3) -- (8.5,3); \node at (7.5,2.6) {$3$};
\node at (10,3) {$(4,3,2,1)$};
\draw (11.5,3) -- (13.5,3); \node at (12.5,2.6) {$4$};
\node at (15,3) {$(5,3,2,1)$};
\draw (0,3.5) -- (0,5.5); \draw (5,3.5) -- (5,5.5); \draw (10,3.5) -- (10,5.5); \draw (15,3.5) -- (15,5.5);
\node at (-0.3,4.5) {$1$}; \node at (4.7,4.5) {$1$}; \node at (9.7,4.5) {$1$}; \node at (14.7,4.5) {$1$};
\node at (0,6) {$(3,3,2,1)$};
\draw (1.5,6) -- (3.5,6); \node at (2.5,5.6) {$1$};
\node at (5,6) {$(3,3,2,2)$};
\draw (6.5,6) -- (8.5,6); \node at (7.5,5.6) {$3$};
\node at (10,6) {$(4,3,2,2)$};
\draw (11.5,6) -- (13.5,6); \node at (12.5,5.6) {$4$};
\node at (15,6) {$(5,3,2,2)$};
\draw (0,6.5) -- (0,8.5); \draw (5,6.5) -- (5,8.5); 
\node at (-0.3,7.5) {$3$}; \node at (4.7,7.5) {$3$}; 
\node at (0,9) {$(4,3,2,1)$};
\draw (1.5,9) -- (3.5,9); \node at (2.5,8.6) {$1$};
\node at (5,9) {$(4,3,2,2)$};
\draw (0,9.5) -- (0,11.5); \draw (5,9.5) -- (5,11.5); 
\node at (-0.3,10.5) {$4$}; \node at (4.7,10.5) {$4$}; 
\node at (0,12) {$(5,3,2,1)$};
\draw (1.5,12) -- (3.5,12); \node at (2.5,11.6) {$1$};
\node at (5,12) {$(5,3,2,2)$};
\end{tikzpicture}

\caption{An example of an admissible function for $\la=(3,3,1,0)$, $\mu= (5,3,2,1)$, $\nu=(5,3,2,0)$. }
\label{fig:example-admF1}
\end{figure}
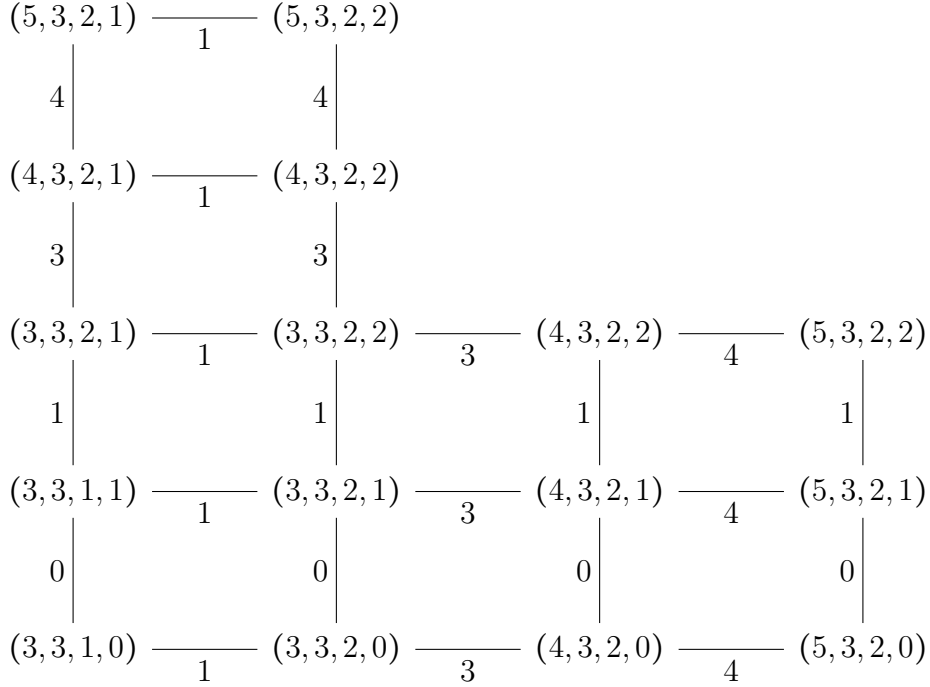

Denote by $b_{i, j}$ the lattice box with south-western corner at $(i, j)$. Given an admissible function $\Lambda: \mathcal V \to \Sig_N$, each box $b_{i, j}$ with $(i+1,j+1) \in \mathcal V$ belongs to one of the following three classes:
\begin{itemize}
\item
We call it a {\it zero-box} if $h_{i, j} = v_{i, j} = h_{i, j+1} = v_{i+1, j}$.

\smallskip

\item
We call it a {\it one-box} if $h_{i, j} = v_{i, j} = h_{i, j+1}-1 = v_{i+1, j}-1$.

\smallskip

\item
We call it a {\it trivial box} if $h_{i, j} = h_{i, j+1} \neq v_{i, j} = v_{i+1, j}$ (or if it is outside of the rectangle).

\end{itemize}
See Figure \ref{fig:exampBoxes}.

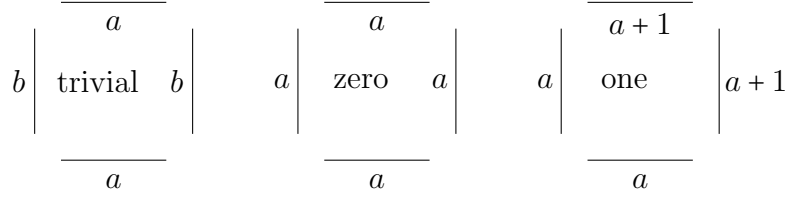
\begin{figure}
\begin{tikzpicture}[>=stealth,scale=0.7]
\draw (0.5,0) -- (2.5,0); \node at (1.5,-0.4) {$a$}; \draw (0,0.5) -- (0,2.5); \node at (-0.3,1.5) {$b$};
\draw (0.5,3) -- (2.5,3); \node at (1.5,2.6) {$a$}; \draw (3,0.5) -- (3,2.5); \node at (2.7,1.5) {$b$};
\node at (1.2,1.5) {trivial};
\draw (5.5,0) -- (7.5,0); \node at (6.5,-0.4) {$a$}; \draw (5,0.5) -- (5,2.5); \node at (4.7,1.5) {$a$};
\draw (5.5,3) -- (7.5,3); \node at (6.5,2.6) {$a$}; \draw (8,0.5) -- (8,2.5); \node at (7.7,1.5) {$a$};
\node at (6.2,1.5) {zero};
\draw (10.5,0) -- (12.5,0); \node at (11.5,-0.4) {$a$}; \draw (10,0.5) -- (10,2.5); \node at (9.7,1.5) {$a$};
\draw (10.5,3) -- (12.5,3); \node at (11.5,2.6) {$a+1$}; \draw (13,0.5) -- (13,2.5); \node at (13.7,1.5) {$a+1$};
\node at (11.2,1.5) {one};
\end{tikzpicture}
\caption{Three possible types of boxes.}
\label{fig:exampBoxes}
\end{figure}

If $B = b_{i, j}$ is a zero-box or a one-box, then set
\begin{align}
r(B):= h_{i, j} = v_{i, j}, \quad \overline{r}(B) := h_{i, j+1}=v_{i+1, j}, \quad c(B) := \exp_{t}\left(\Lambda(i, j)'_{r(B)} - \Lambda(i, j)'_{r(B)+1}\right),
\end{align}
where $\exp_{t}(a):=t^{a}$. Note that $\Lambda(i, j)'_{r(B)} - \Lambda(i, j)'_{r(B)+1}$ is the number of coordinates of $\Lambda(i, j)$ which are equal to $r(B)$.

There exists at most one admissible function such that $\mathcal V = V_{m,n}$ and $\Lambda (m,n) = \rho$. Indeed, there is a unique way to prescribe values of $\Lambda$ to all boundary vertices of the rectangle to satisfy the first condition of admissibility. Then there is at most one way to extend $\Lambda$ to interior vertices. If there is no admissible function, then define $U(\mu \to \rho \mid \la \to \nu) := 0$. Suppose now that such function $\Lambda$ exists. Note that all boxes from $B_{m,n}$ belong to one of the three classes defined above, and that any box adjacent to a zero-box is trivial.

\begin{figure}[h]
\includegraphics[width=0.8\textwidth]{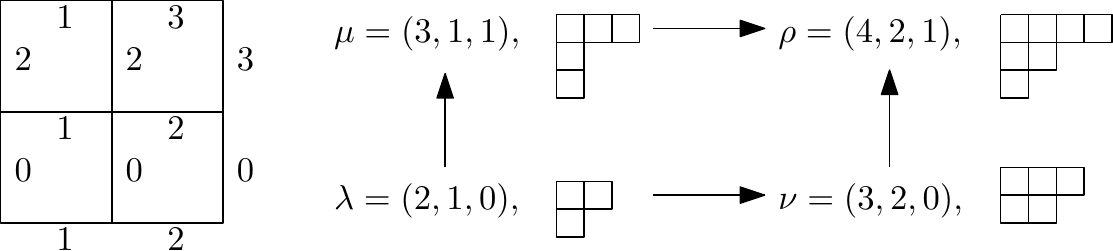}
\caption{On the right: an example of four signatures. On the left: values $h_{i, j}$, $v_{i, j}$ for the admissible function on the whole $V_{2,2}$.}
\end{figure}

\begin{definition}
\label{def:RSK-main1}
Let $\Lambda$ be the admissible function with $\Lambda (m,n) = \rho$. Consider $\{ h_{i,j} \}$, $\{ v_{i,j} \}$ (for the purpose of this definition we set $v_{i, -1} = h_{-1, j} = -\infty$). To each box $B=b_{i, j}$ we associate weight $w(B)$ according to the following rules:
\begin{enumerate}
\item
$w(B) = 1$, if

\begin{enumerate}
\item
$B$ is a trivial box.

\item
$B$ is a one-box and at least one of $b_{i-1, j}$, $b_{i, j-1}$ is a one-box.

\item
$B$ is a one-box and $b_{i-1, j-1}$ is a one-box with $\overline{r}(b_{i-1, j-1}) = r(B)-1$.

\item
$B$ is a zero-box and $b_{i-1, j-1}$ is a zero-box with $\overline{r}(b_{i-1, j-1}) = r(B)-1$.

\end{enumerate}

\medskip

\item
$w(B)=t$, if $B$ is a zero-box, and  exactly one of $h_{i-1, j}$, $v_{i, j-1}$ is equal to $r(B)-1$.

\medskip

\item
$w(B)=1-t$, if $B$ is a one-box, and boxes $b_{i-1, j}$, $b_{i, j-1}$ are trivial, and  exactly one of $h_{i-1, j}$, $v_{i, j-1}$ is equal to $r(B)-1$.

\medskip

\item
$w(B)=\frac{t-c(B)}{1-c(B)}$, if $B$ is a zero-box, and $h_{i-1, j}$, $v_{i, j-1} < r(B)-1$.

\medskip

\item
$w(B)=\frac{1-t}{1-c(B)}$, if $B$ is a one-box, and $h_{i-1, j}$, $v_{i, j-1} < r(B)-1$.

\medskip

\item
$w(B) = 0$, if
\begin{enumerate}
\item
$B$ is a one-box and $b_{i-1, j-1}$ is a zero-box with $\overline{r}(b_{i-1, j-1}) = r(B)-1$.

\item
$B$ is a zero-box and $b_{i-1, j-1}$ is a one-box with $\overline{r}(b_{i-1, j-1}) = r(B)-1$.

\end{enumerate}

\end{enumerate}

Then define
\begin{align}
U(\mu \to \rho \mid \la \to \nu) := \prod_{B \in B_{m, n}} w(B)
\end{align}

\end{definition}

\medskip

An example of this construction is given in Figure \ref{fig:weigh-full-example}.

\begin{figure}
\begin{tikzpicture}[>=stealth,scale=0.7]
\node at (0,0) {$(3,3,1,0)$};
\draw (1.5,0) -- (3.5,0); \node at (2.5,-0.4) {$1$};
\node at (5,0) {$(3,3,2,0)$};
\draw (6.5,0) -- (8.5,0); \node at (7.5,-0.4) {$3$};
\node at (10,0) {$(4,3,2,0)$};
\draw (11.5,0) -- (13.5,0); \node at (12.5,-0.4) {$4$};
\node at (15,0) {$(5,3,2,0)$};
\draw (0,0.5) -- (0,2.5); \draw (5,0.5) -- (5,2.5); \draw (10,0.5) -- (10,2.5); \draw (15,0.5) -- (15,2.5);
\node at (-0.3,1.5) {$0$}; \node at (4.7,1.5) {$0$}; \node at (9.7,1.5) {$0$}; \node at (14.7,1.5) {$0$};
\node at (0,3) {$(3,3,1,1)$};
\draw (1.5,3) -- (3.5,3); \node at (2.5,2.6) {$1$}; \node at (2.5,4.5) {$\mathbf{t}$};
\node at (5,3) {$(3,3,2,1)$};
\draw (6.5,3) -- (8.5,3); \node at (7.5,2.6) {$3$};
\node at (10,3) {$(4,3,2,1)$};
\draw (11.5,3) -- (13.5,3); \node at (12.5,2.6) {$4$};
\node at (15,3) {$(5,3,2,1)$};
\draw (0,3.5) -- (0,5.5); \draw (5,3.5) -- (5,5.5); \draw (10,3.5) -- (10,5.5); \draw (15,3.5) -- (15,5.5);
\node at (-0.3,4.5) {$1$}; \node at (4.7,4.5) {$1$}; \node at (9.7,4.5) {$1$}; \node at (14.7,4.5) {$1$};
\node at (0,6) {$(3,3,2,1)$};
\draw (1.5,6) -- (3.5,6); \node at (2.5,5.6) {$1$};
\node at (5,6) {$(3,3,2,2)$};
\draw (6.5,6) -- (8.5,6); \node at (7.5,5.6) {$3$}; \node at (7.5,7.5) {$\mathbf{\frac{1-t}{1-t^2}}$};
\node at (10,6) {$(4,3,2,2)$};
\draw (11.5,6) -- (13.5,6); \node at (12.5,5.6) {$4$}; \node at (12.5,7.5) {$\mathbf{1}$};
\node at (15,6) {$(5,3,2,2)$};
\draw (0,6.5) -- (0,8.5); \draw (5,6.5) -- (5,8.5); \draw (10,6.5) -- (10,8.5); \draw (15,6.5) -- (15,8.5);
\node at (-0.3,7.5) {$3$}; \node at (4.7,7.5) {$3$}; \node at (9.7,7.5) {$4$}; \node at (14.7,7.5) {$5$};
\node at (0,9) {$(4,3,2,1)$};
\draw (1.5,9) -- (3.5,9); \node at (2.5,8.6) {$1$};
\node at (5,9) {$(4,3,2,2)$};
\draw (6.5,9) -- (8.5,9); \node at (7.5,8.6) {$4$}; \node at (7.5,10.5) {$\mathbf{1}$};
\node at (10,9) {$(5,3,2,2)$};
\draw (11.5,9) -- (13.5,9); \node at (12.5,8.6) {$5$}; \node at (12.5,10.5) {$\mathbf{1}$};
\node at (15,9) {$(6,3,2,2)$};
\draw (0,9.5) -- (0,11.5); \draw (5,9.5) -- (5,11.5); \draw (10,9.5) -- (10,11.5); \draw (15,9.5) -- (15,11.5);
\node at (-0.3,10.5) {$4$}; \node at (4.7,10.5) {$4$}; \node at (9.7,10.5) {$5$}; \node at (14.7,10.5) {$6$};
\node at (0,12) {$(5,3,2,1)$};
\draw (1.5,12) -- (3.5,12); \node at (2.5,11.6) {$1$};
\node at (5,12) {$(5,3,2,2)$};
\draw (6.5,12) -- (8.5,12); \node at (7.5,11.6) {$5$};
\node at (10,12) {$(6,3,2,2)$};
\draw (11.5,12) -- (13.5,12); \node at (12.5,11.6) {$6$};
\node at (15,12) {$(7,3,2,2)$};
\end{tikzpicture}
\caption{The admissible function corresponding to $\la=(3,3,1,0)$, $\mu= (5,3,2,1)$, $\nu=(5,3,2,0)$, and $\rho = (7,3,2,2)$. The weights of all non-trivial boxes are indicated in bold. As a result, $U(\mu \to \rho \mid \la \to \nu) = t \frac{1-t}{1-t^2}$ for these signatures.}
\label{fig:weigh-full-example}
\end{figure}
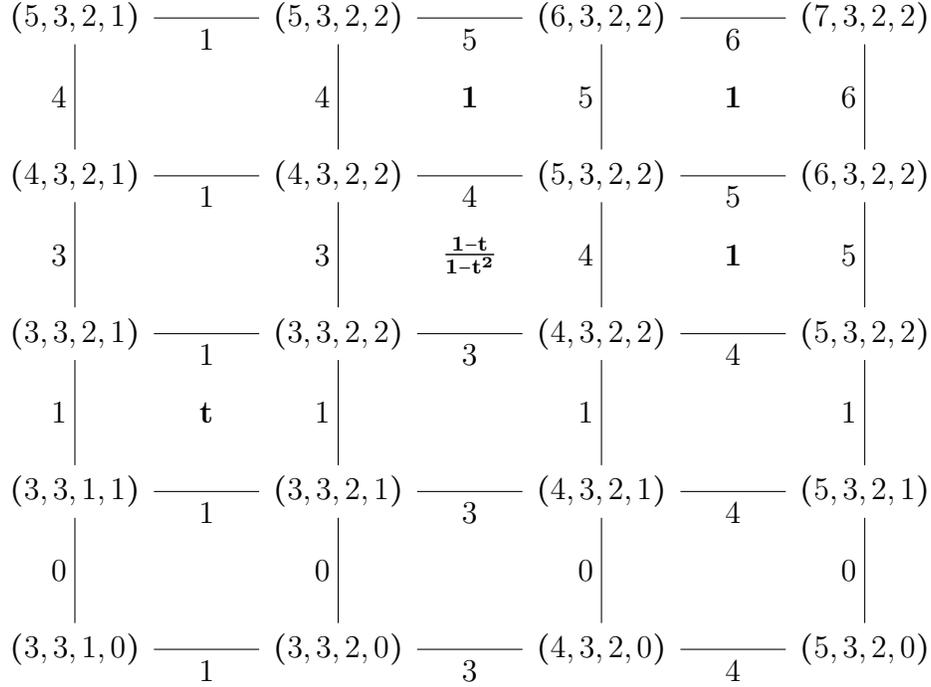

Our next goal is to give a natural procedure which takes the input $\la, \mu, \nu$ and outputs the random signature $\tilde \rho$ with probability $U(\mu \to \tilde \rho \mid \la \to \nu)$. The idea behind it is to construct an admissible function on the whole $V_{m,n}$ iteratively by assigning value to one vertex at a time.
In order to formulate the procedure, we need a technical lemma first.

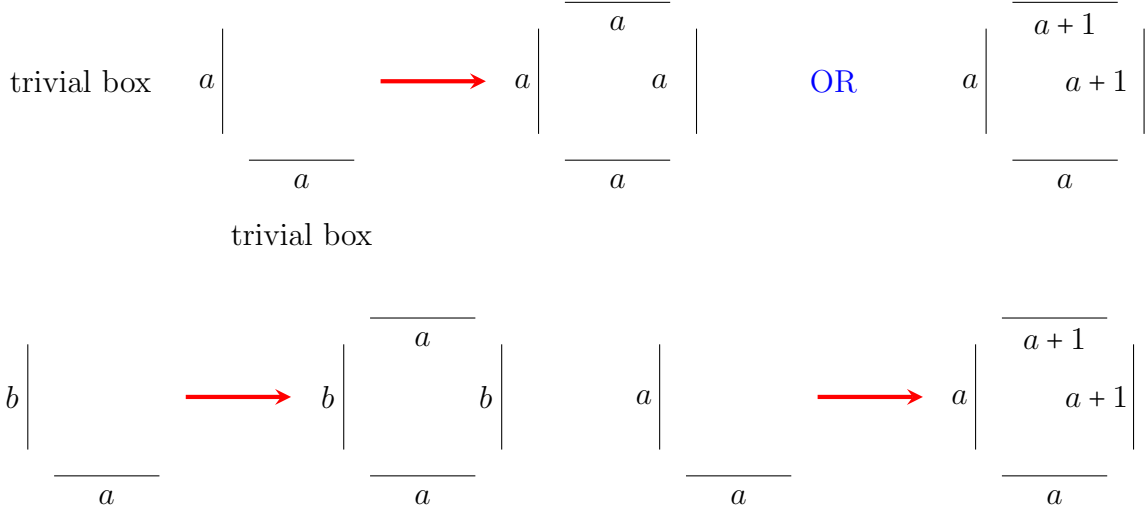
\begin{figure}
\begin{tikzpicture}[>=stealth,scale=0.7]
\draw (0.5,0) -- (2.5,0); \node at (1.5,-0.4) {$a$}; \draw (0,0.5) -- (0,2.5); \node at (-0.3,1.5) {$b$};
\draw[ultra thick,->,red] (3,1.5) -- (5,1.5);
\draw (6.5,0) -- (8.5,0); \node at (7.5,-0.4) {$a$}; \draw (6,0.5) -- (6,2.5); \node at (5.7,1.5) {$b$};
\draw (6.5,3) -- (8.5,3); \node at (7.5,2.6) {$a$}; \draw (9,0.5) -- (9,2.5); \node at (8.7,1.5) {$b$};
\draw (12.5,0) -- (14.5,0); \node at (13.5,-0.4) {$a$}; \draw (12,0.5) -- (12,2.5); \node at (11.7,1.5) {$a$};
\draw[ultra thick,->,red] (15,1.5) -- (17,1.5);
\draw (18.5,0) -- (20.5,0); \node at (19.5,-0.4) {$a$}; \draw (18,0.5) -- (18,2.5); \node at (17.7,1.5) {$a$};
\draw (18.5,3) -- (20.5,3); \node at (19.5,2.6) {$a+1$}; \draw (21,0.5) -- (21,2.5); \node at (20.3,1.5) {$a+1$};
\draw (4.2,6) -- (6.2,6); \node at (5.2,5.6) {$a$}; \draw (3.7,6.5) -- (3.7,8.5); \node at (3.4,7.5) {$a$};
\node at (5.2,4.6) {trivial box}; \node at (1,7.5) {trivial box};
\draw[ultra thick,->,red] (6.7,7.5) -- (8.7,7.5);
\draw (10.2,6) -- (12.2,6); \node at (11.2,5.6) {$a$}; \draw (9.7,6.5) -- (9.7,8.5); \node at (9.4,7.5) {$a$};
\draw (10.2,9) -- (12.2,9); \node at (11.2,8.6) {$a$}; \draw (12.7,6.5) -- (12.7,8.5); \node at (12,7.5) {$a$};
\node at (15.3,7.5) {\textcolor{blue}{OR}};
\draw (18.7,6) -- (20.7,6); \node at (19.7,5.6) {$a$}; \draw (18.7,9) -- (20.7,9); \node at (17.9,7.5) {$a$};
\draw (18.2,6.5) -- (18.2,8.5); \node at (19.7,8.6) {$a+1$}; \draw (21.2,6.5) -- (21.2,8.5); \node at (20.3,7.5) {$a+1$};
\end{tikzpicture}
\caption{Possible ways to extend an admissible function to a new vertex, see Lemma \ref{lem:refined}. Cases A (we assume $a \ne b$) and B of the lemma are depicted in the bottom, case C -- in the top.}
\label{fig:exampLemmaTec}
\end{figure}

\begin{lemma}
\label{lem:refined}
Let $\Lambda: \mathcal V \to \Sig_N$ be an admissible function with $\mathcal V \subset V_{m,n}$, and assume that $(i,j) \in V_{m,n}$
satisfies $(i+1,j+1) \notin \mathcal V$, $(i,j), (i+1,j), (i,j+1) \in \mathcal V$. Then one can define value of $\Lambda$ at $(i+1, j+1)$ (it is enough to assign $h_{i,j+1}$ and $v_{i+1,j}$ for this) such that $\Lambda$ remains admissible. Moreover, this can be done in the following specific way (see Figure \ref{fig:exampLemmaTec}) :

A) If $h_{i,j} \ne v_{i,j}$, then assign $h_{i,j+1} := h_{i,j}$, $v_{i+1,j} := v_{i,j}$.

B) If $h_{i,j} = v_{i,j}$ and at least one of the boxes $b_{i,j-1}$ and $b_{i-1,j}$ is a one-box. Then $h_{i,j+1} := h_{i,j}+1$, $v_{i+1,j} := h_{i,j}+1$.

C) $h_{i,j} = v_{i,j}$ and both $b_{i,j-1}, b_{i-1,j}$ are trivial boxes. Then both assignments: $h_{i,j+1} := h_{i,j}+1$, $v_{i+1,j} := h_{i,j}+1$ and $h_{i,j+1} := h_{i,j}$, $v_{i+1,j} := h_{i,j}$, produce an admissible function.

These three cases exhaust all possible situations.

\end{lemma}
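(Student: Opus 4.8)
The plan is to view the extension of $\Lambda$ to $(i+1,j+1)$ as nothing more than a choice of the two new edge labels $h_{i,j+1}$ and $v_{i+1,j}$, and to check that a prescribed choice meets the two requirements packed into the phrase ``admissible function'': \emph{consistency}, so that the value read off the top edge (raise the particle at $h_{i,j+1}$ in $\Lambda(i,j+1)$) agrees with the value read off the right edge (raise the particle at $v_{i+1,j}$ in $\Lambda(i+1,j)$), making $\Lambda(i+1,j+1)$ a well-defined element of $\Sig_N$; and the two admissibility conditions at the newly created edges, namely strict monotonicity $h_{i-1,j+1}<h_{i,j+1}$ and $v_{i+1,j-1}<v_{i+1,j}$, together with condition (2) for the box $b_{i,j}$. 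First I would record consistency in closed form: writing $a=h_{i,j}$, $b=v_{i,j}$ and comparing the two ways of raising a particle twice out of $\Lambda(i,j)$, one sees that consistency is equivalent to the equality of the two-element multisets of raised positions $\{a,\,v_{i+1,j}\}=\{b,\,h_{i,j+1}\}$.

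With this in hand I would run the case analysis of the statement. In Case A ($a\neq b$) the multiset identity together with $a\neq b$ forces $h_{i,j+1}=a$ and $v_{i+1,j}=b$ uniquely, so $b_{i,j}$ is a trivial box; realizability is automatic because $\Lambda(i,j+1)$ still contains a particle at $a$. For monotonicity I would split on the type of $b_{i-1,j}$: in each subcase the existing inequality $h_{i-1,j}<h_{i,j}=a$ propagates to $h_{i-1,j+1}<a=h_{i,j+1}$ (the one-box subcase uses $a\neq b$ to upgrade the inequality to a strict one), and the vertical inequality follows symmetrically.

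For Cases B and C I would first observe that when $a=b=k$ the two children coincide, $\Lambda(i+1,j)=\Lambda(i,j+1)=\Lambda(i,j)$ with one particle raised from $k$ to $k+1$, so consistency forces $h_{i,j+1}=v_{i+1,j}$, and admissibility condition (2) restricts this common value to $k$ (zero-box) or $k+1$ (one-box). Exhaustiveness is then immediate: a neighbour $b_{i-1,j}$ or $b_{i,j-1}$ cannot be a zero-box, since its edge shared with $b_{i,j}$ would equal $k$ and violate the strict inequalities $h_{i-1,j}<k$, $v_{i,j-1}<k$; hence either some neighbour is a one-box (Case B) or both are trivial (Case C). In Case B the one-box neighbour has its top, respectively right, edge equal to $k$, so the zero-box value $k$ would break strict monotonicity and only the one-box survives; I would then verify that the one-box extension is monotone and (always) realizable. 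In Case C both candidate values pass the monotonicity test, because triviality of the neighbours gives $h_{i-1,j+1}=h_{i-1,j}<k$ and $v_{i+1,j-1}=v_{i,j-1}<k$.

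The step I expect to be the main obstacle is the realizability of the zero-box extension in Case C. Unlike the one-box, which is always realizable because $\Lambda(i,j+1)$ acquires a fresh particle at $k+1$, the zero-box must raise a \emph{second} particle out of position $k$, and is therefore legitimate only when position $k$ is occupied at least twice in $\Lambda(i,j)$, i.e. $\Lambda(i,j)'_{k}-\Lambda(i,j)'_{k+1}\ge 2$. The delicate part is thus to pin down exactly when this second particle is available: I would track the multiplicity of position $k$ along the two incoming trivial boxes $b_{i-1,j}$ and $b_{i,j-1}$, using the interlacing and monotonicity already in force, and I anticipate that this is precisely the bookkeeping that separates the zero-box situations and ultimately feeds into the quantity $c(B)$ and the split between rules (2) and (4) of Definition \ref{def:RSK-main1}.
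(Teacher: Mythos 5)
Most of your plan coincides with the paper's actual proof, which is exactly the label-level case analysis you describe: it reduces everything to the strict inequality $h_{i-1,j+1}<h_{i,j+1}$ (and its vertical mirror) and verifies it in your cases A, B, C with the same subcases. In particular, your observation that Case A's one-box subcase needs $a\neq b$ to upgrade $h_{i,j}>h_{i-1,j}$ to $h_{i,j}>h_{i-1,j}+1=h_{i-1,j+1}$ is precisely the paper's second subcase, and your Case C monotonicity argument via $h_{i-1,j+1}=h_{i-1,j}$, $v_{i+1,j-1}=v_{i,j-1}$ is the paper's last case. Your consistency (multiset) check and your exhaustiveness argument (a neighbouring zero-box would force $h_{i-1,j}=v_{i,j}=k$ against $h_{i-1,j}<h_{i,j}=k$) are correct and go beyond what the paper writes: its proof checks only condition (1) and asserts condition (2) is automatic, never discussing well-definedness or exhaustiveness.

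The step you defer, however, is a genuine gap, and the bookkeeping you propose will not close it in the direction you anticipate: in Case C the second particle at $k$ need \emph{not} exist. Concretely, take $N=2$ and $\Lambda(0,0)=(5,-2)$, $\Lambda(1,0)=\Lambda(0,1)=(5,-1)$, $\Lambda(1,1)=(5,0)$, $\Lambda(2,0)=\Lambda(0,2)=(6,-1)$, $\Lambda(2,1)=\Lambda(1,2)=(6,0)$. Then $b_{1,0}$ and $b_{0,1}$ are (interior) trivial boxes and $h_{1,1}=v_{1,1}=5$, so $b_{1,1}$ falls under Case C; yet $\Lambda(1,2)=(6,0)$ has no particle at $5$, so the assignment $h_{1,2}=v_{2,1}=5$ is realized by no signature $\Lambda(2,2)$. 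Thus claim C of Lemma \ref{lem:refined} is true only as a statement about labels satisfying admissibility conditions (1) and (2); as a statement about signature-valued extensions it fails whenever $\Lambda(i,j)$ carries a single particle at $k$ and both incoming labels are $<k-1$. The paper's proof does not notice this because it never checks that a label can be realized by a particle move; the construction nevertheless survives because Definition \ref{def:refined} attaches probability $(t-c(B))/(1-c(B))$ to the zero move, which vanishes exactly when $c(B)=t$, i.e.\ exactly in the unrealizable situation, while in the subcase where exactly one incoming label equals $k-1$ the trivial neighbour does force a second particle at $k$, so the probability-$t$ move there is legitimate. So your instinct that the issue feeds into $c(B)$ and the split between rules (2) and (4) of Definition \ref{def:RSK-main1} is right, but the resolution is that the impossible move receives probability zero, not that availability of the second particle can always be established.
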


\begin{proof}

Throughout the proof we will extensively use the fact that each box is of one of the three types from Figure \ref{fig:exampBoxes}. We need to check that assignment of $h_{i,j+1}$ and $v_{i+1,j}$ specified in the lemma's statement satisfies two conditions from the definition of admissible functions. The second condition is satisfied automatically. Let us check the first condition. It is enough to show that $h_{i,j+1} > h_{i-1,j+1}$; the proof of $v_{i+1,j} > v_{i+1, j-1}$ is analogous (by the symmetry).

\begin{enumerate}

\item
Suppose we are in case A. Then there are two subcases. First, if $h_{i-1,j+1} \ne v_{i,j}$ or $b_{i-1,j}$ is a zero-box, then  $h_{i, j+1} = h_{i, j} > h_{i-1, j} = h_{i-1, j+1}$. Second, if $b_{i-1,j}$ is a one-box, then
$h_{i, j+1} = h_{i, j} > h_{i-1, j}+1 = h_{i-1, j+1}$.

\item
Suppose we are in case B or C, and $h_{i,j+1} := h_{i,j}+1$, $v_{i+1,j} := h_{i,j}+1$. Then $h_{i, j+1} = h_{i, j}+1 > h_{i-1, j}+1 \ge h_{i-1, j+1}$.

\item
Suppose we are in case C, and $h_{i,j+1} := h_{i,j}$, $v_{i+1,j} := h_{i,j}$. Then $h_{i, j+1} = h_{i, j} > h_{i-1, j} = h_{i-1, j+1}$.
\end{enumerate}
Thus the first condition is satisfied in all cases.
\end{proof}
Now we are able to reformulate Definition \ref{def:RSK-main1} in terms of a probabilistic algorithm.

\begin{definition}[Refined HL-RSK algorithm]
\label{def:refined}

\noindent

\smallskip

\textbf{Input}: $\la, \mu, \nu \in \Sig_N$: $\la \prec \mu$, $\la \prec \nu$.

\smallskip

\textbf{(Random) Output}: A signature $\tilde \rho \in \Sig_N$ appearing with probability $U(\mu \to \tilde \rho \mid \la \to \nu)$.

\smallskip

Step 1: Consider $V_{m,n}$, $B_{m,n}$ as above and assign values $\Lambda (0,j)$, $\Lambda (i,0)$, $0 \le i \le m$, $0 \le j \le n$, in a (unique) way so that they form an admissible function on the left and bottom boundaries of $V_{m,n}$.

\smallskip

Step 2: Consider a vertex $(i+1,j+1)$ at which the value of $\Lambda$ is not yet assigned but at its west, south, and south-west neighbors values of $\Lambda$ are already assigned. Then we are in one of the three cases of Lemma \ref{lem:refined}. Assign value of $\Lambda$ in $(i+1,j+1)$ in the following way (the first four cases are depicted in Figure \ref{fig:exampRefRSK}; the last two cases are depicted in Figure \ref{fig:exampRefRSK2}):

\begin{itemize}

\item
In case A, set $h_{i,j+1} := h_{i,j}$, $v_{i+1,j} := v_{i,j}$.

\item
In case B, set $h_{i,j+1} := h_{i,j}+1$, $v_{i+1,j} := h_{i,j}+1$.

\item
In case C, and if $h_{i,j} -1 = v_{i,j}-1 = h_{i-1,j} = v_{i,j-1} = h_{i-1,j-1}=v_{i-1,j-1}$,  set $h_{i,j+1} := h_{i,j}$, $v_{i+1,j} := h_{i,j}$.

\item
In case C, and if $h_{i,j} -1 = v_{i,j}-1 = h_{i-1,j} = v_{i,j-1} = h_{i-1,j-1}+1=v_{i-1,j-1}+1$, set $h_{i,j+1} := h_{i,j}+1$, $v_{i+1,j} := h_{i,j}+1$.

\item
In case C, and if exactly one of $h_{i-1,j}, v_{i,j-1}$ is equal to $h_{i,j}-1$, then set $h_{i,j+1} := h_{i,j}+1$, $v_{i+1,j} := h_{i,j}+1$ with probability $(1-t)$, or set $h_{i,j+1} := h_{i,j}$, $v_{i+1,j} := h_{i,j}$ with probability $t$.

\item
In case C, and if $h_{i-1,j}, v_{i, j-1} < h_{i,j}-1$, then set $h_{i,j+1} := h_{i,j}+1$, $v_{i+1,j} := h_{i,j}+1$ with probability $(1-t)/(1-c(B))$, or set $h_{i,j+1} := h_{i,j}$, $v_{i+1,j} := h_{i,j}$ with probability $(t-c(B))/(1-c(B))$; recall that $c(B)$ is $t$ raised to the number of coordinates of the signature $\Lambda(i,j)$ which are equal to $h_{i,j}$.

\end{itemize}

Step 3: Iterate Step 2 until we fill all vertices in $V_{m,n}$. Output the value $\Lambda (m,n)$.

\end{definition}

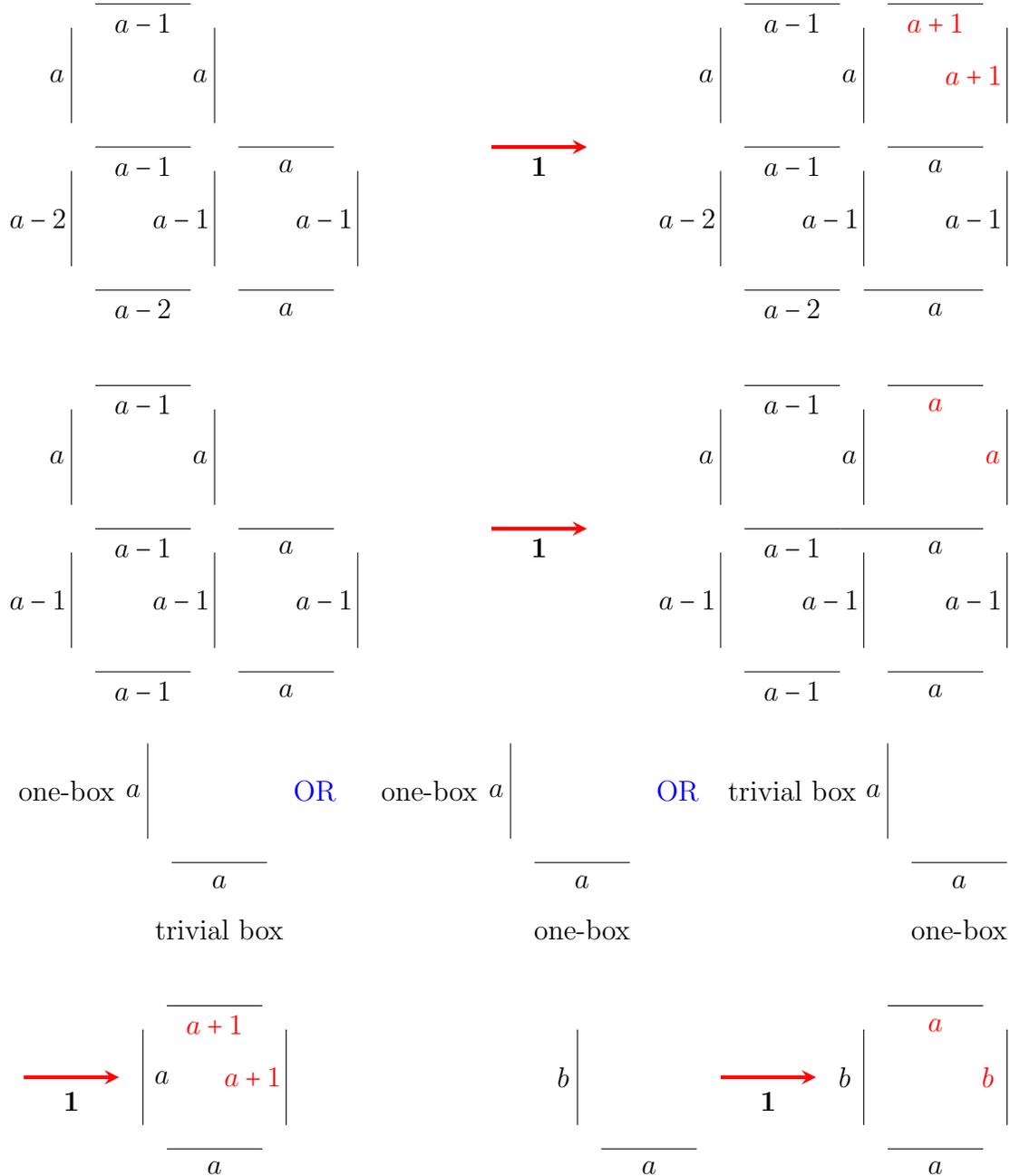
\begin{figure}
\begin{tikzpicture}[>=stealth,scale=0.7]
\draw[ultra thick,->,red] (-2,1.5) -- (0,1.5); \node at (-1,1) {\textbf{1}};
\draw (1,0) -- (3,0); \node at (2, -0.4) {$a$}; \draw (0.5, 0.5) -- (0.5, 2.5); \node at (0.9,1.5) {$a$};
\draw (1,3) -- (3,3); \node at (2,2.6) {\textcolor{red}{$a+1$}}; \draw (3.5,0.5) -- (3.5,2.5); \node at (2.8,1.5) {\textcolor{red}{$a+1$}};
\draw (10.1,0) -- (12.1,0); \node at (11.1,-0.4) {$a$}; \draw (9.6,0.5) -- (9.6,2.5); \node at (9.3,1.5) {$b$};
\draw[ultra thick,->,red] (12.6,1.5) -- (14.6,1.5); \node at (13.6,1) {\textbf{1}};
\draw (16.1,0) -- (18.1,0); \node at (17.1,-0.4) {$a$}; \draw (15.6,0.5) -- (15.6,2.5); \node at (15.2,1.5) {$b$};
\draw (16.1,3) -- (18.1,3); \node at (17.1,2.6) {$\textcolor{red}{a}$}; \draw (18.6,0.5) -- (18.6,2.5); \node at (18.2,1.5) {$\textcolor{red}{b}$};
\draw (1.1,6) -- (3.1,6); \node at (2.1,5.6) {$a$}; \draw (0.6,6.5) -- (0.6,8.5); \node at (0.3,7.5) {$a$};
\node at (2.1,4.6) {trivial box}; \node at (-1.1,7.5) {one-box};
\node at (4.1,7.5) {\textcolor{blue}{OR}};
\draw (8.7,6) -- (10.7,6); \node at (9.7,5.6) {$a$}; \draw (8.2,6.5) -- (8.2,8.5); \node at (7.9,7.5) {$a$};
\node at (9.7,4.6) {one-box}; \node at (6.5,7.5) {one-box};
\node at (11.7,7.5) {\textcolor{blue}{OR}};
\draw (16.6,6) -- (18.6,6); \node at (17.6,5.6) {$a$}; \draw (16.1,6.5) -- (16.1,8.5); \node at (15.8,7.5) {$a$};
\node at (17.6,4.6) {one-box}; \node at (14.1,7.5) {trivial box};
\draw (15.1,13) -- (18.1,13); \node at (17.1,12.6) {$a$}; \draw (15.6,13.5) -- (15.6,15.5); \node at (15.3,14.5) {$a$};
\draw (13.1,13) -- (15.1,13); \node at (14.1,12.6) {$a-1$}; \draw (15.6,10.5) -- (15.6,12.5); \node at (14.9,11.5) {$a-1$};
\draw (16.1,10) -- (18.1,10); \node at (17.1,9.6) {$a$}; \draw (18.6,10.5) -- (18.6,12.5); \node at (17.9,11.5) {$a-1$};
\draw (13.1,16) -- (15.1,16); \node at (14.1,15.6) {$a-1$}; \draw (12.6,13.5) -- (12.6,15.5); \node at (12.3,14.5) {$a$};
\draw (13.1,10) -- (15.1,10); \node at (14.1,9.6) {$a-1$}; \draw (12.6,10.5) -- (12.6,12.5); \node at (11.9,11.5) {$a-1$};
\draw (16.1,16) -- (18.1,16); \node at (17.1,15.6) {\textcolor{red}{$a$}}; \draw (18.6,13.5) -- (18.6,15.5); \node at (18.3,14.5) {\textcolor{red}{$a$}};
\draw[ultra thick,->,red] (7.8,13) -- (9.8,13); \node at (8.8,12.6) {\textbf{1}};
\draw (2.5,13) -- (4.5,13); \node at (3.5,12.6) {$a$}; \draw (2,13.5) -- (2,15.5); \node at (1.7,14.5) {$a$};
\draw (-0.5,13) -- (1.5,13); \node at (0.5,12.6) {$a-1$}; \draw (2,10.5) -- (2,12.5); \node at (1.3,11.5) {$a-1$};
\draw (2.5,10) -- (4.5,10); \node at (3.5,9.6) {$a$}; \draw (5,10.5) -- (5,12.5); \node at (4.3,11.5) {$a-1$};
\draw (-0.5,16) -- (1.5,16); \node at (0.5,15.6) {$a-1$}; \draw (-1,13.5) -- (-1,15.5); \node at (-1.3,14.5) {$a$};
\draw (-0.5,10) -- (1.5,10); \node at (0.5,9.6) {$a-1$}; \draw (-1,10.5) -- (-1,12.5); \node at (-1.7,11.5) {$a-1$};
\draw (16.1,21) -- (18.1,21); \node at (17.1,20.6) {$a$}; \draw (15.6,21.5) -- (15.6,23.5); \node at (15.3,22.5) {$a$};
\draw (13.1,21) -- (15.1,21); \node at (14.1,20.6) {$a-1$}; \draw (15.6,18.5) -- (15.6,20.5); \node at (14.9,19.5) {$a-1$};
\draw (15.6,18) -- (18.1,18); \node at (17.1,17.6) {$a$}; \draw (18.6,18.5) -- (18.6,20.5); \node at (17.9,19.5) {$a-1$};
\draw (13.1,24) -- (15.1,24); \node at (14.1,23.6) {$a-1$}; \draw (12.6,21.5) -- (12.6,23.5); \node at (12.3,22.5) {$a$};
\draw (13.1,18) -- (15.1,18); \node at (14.1,17.6) {$a-2$}; \draw (12.6,18.5) -- (12.6,20.5); \node at (11.9,19.5) {$a-2$};
\draw (16.1,24) -- (18.1,24); \node at (17.1,23.6) {\textcolor{red}{$a+1$}}; \draw (18.6,21.5) -- (18.6,23.5); \node at (17.9,22.5) {\textcolor{red}{$a+1$}};
\draw[ultra thick,->,red] (7.8,21) -- (9.8,21); \node at (8.8,20.6) {\textbf{1}};
\draw (2.5,21) -- (4.5,21); \node at (3.5,20.6) {$a$}; \draw (2,21.5) -- (2,23.5); \node at (1.7,22.5) {$a$};
\draw (-0.5,21) -- (1.5,21); \node at (0.5,20.6) {$a-1$}; \draw (2,18.5) -- (2,20.5); \node at (1.3,19.5) {$a-1$};
\draw (2.5,18) -- (4.5,18); \node at (3.5,17.6) {$a$}; \draw (5,18.5) -- (5,20.5); \node at (4.3,19.5) {$a-1$};
\draw (-0.5,24) -- (1.5,24); \node at (0.5,23.6) {$a-1$}; \draw (-1,21.5) -- (-1,23.5); \node at (-1.3,22.5) {$a$};
\draw (-0.5,18) -- (1.5,18); \node at (0.5,17.6) {$a-2$}; \draw (-1,18.5) -- (-1,20.5); \node at (-1.7,19.5) {$a-2$};
\end{tikzpicture}
\caption{Deterministic elementary steps of the RSK algorithm from Definition \ref{def:refined}; $b$ is an arbitrary integer not equal to  $a$.}
\label{fig:exampRefRSK}
\end{figure}

\begin{figure}
\begin{tikzpicture}[>=stealth,scale=0.65]
\draw (2.5,28) -- (4.5,28); \node at (3.5,27.6) {$a$}; \draw (2,28.5) -- (2,30.5); \node at (1.7,29.5) {$a$};
\draw (-0.5,28) -- (1.5,28); \node at (0.5,27.6) {$a-1$}; \draw (2,25.5) -- (2,27.5); \node at (1.3,26.5) {$b$};
\node at (5.5,28) {\textcolor{blue}{OR}};
\draw (9.5,28) -- (11.5,28); \node at (10.5,27.6) {$a$}; \draw (9,28.5) -- (9,30.5); \node at (8.7,29.5) {$a$};
\draw (6.5,28) -- (8.5,28); \node at (7.5,27.6) {$b$}; \draw (9,25.5) -- (9,27.5); \node at (8.3,26.5) {$a-1$};
\draw[ultra thick,->,red] (12.5,28) -- (14.5,28); \node at (13.5,27.6) {$\mathbf{1-t}$};
\draw (18.5,28) -- (20.5,28); \node at (19.5,27.6) {$a$}; \draw (18,28.5) -- (18,30.5); \node at (17.7,29.5) {$a$};
\draw (15.5,28) -- (17.5,28); \draw (18,25.5) -- (18,27.5);
\draw (18.5,31) -- (20.5,31); \node at (19.5,30.6) {\textcolor{red}{$a+1$}}; \draw (21,28.5) -- (21,30.5); \node at (20.3,29.5) {\textcolor{red}{$a+1$}};
\draw (2.5,35) -- (4.5,35); \node at (3.5,34.6) {$a$}; \draw (2,35.5) -- (2,37.5); \node at (1.7,36.5) {$a$};
\draw (-0.5,35) -- (1.5,35); \node at (0.5,34.6) {$a-1$}; \draw (2,32.5) -- (2,34.5); \node at (1.3,33.5) {$b$};
\node at (5.5,35) {\textcolor{blue}{OR}};
\draw (9.5,35) -- (11.5,35); \node at (10.5,34.6) {$a$}; \draw (9,35.5) -- (9,37.5); \node at (8.7,36.5) {$a$};
\draw (6.5,35) -- (8.5,35); \node at (7.5,34.6) {$b$}; \draw (9,32.5) -- (9,34.5); \node at (8.3,33.5) {$a-1$};
\draw[ultra thick,->,red] (12.5,35) -- (14.5,35); \node at (13.5,34.6) {$\mathbf{t}$};
\draw (18.5,35) -- (20.5,35); \node at (19.5,34.6) {$a$}; \draw (18,35.5) -- (18,37.5); \node at (17.7,36.5) {$a$};
\draw (15.5,35) -- (17.5,35);  \draw (18,32.5) -- (18,34.5);
\draw (18.5,38) -- (20.5,38); \node at (19.5,37.6) {\textcolor{red}{$a$}}; \draw (21,35.5) -- (21,37.5); \node at (20.7,36.5) {\textcolor{red}{$a$}};
\draw (2.5,42) -- (4.5,42); \node at (3.5,41.6) {$a$}; \draw (2,42.5) -- (2,44.5); \node at (1.7,43.5) {$a$};
\draw (-0.5,42) -- (1.5,42); \node at (0.5,41.6) {$c$}; \draw (2,39.5) -- (2,41.5); \node at (1.3,40.5) {$b$};
\draw[ultra thick,->,red] (9,42) -- (11,42); \node at (10,41.3) {$\mathbf{\frac{1-t}{1-t^{c(B)}}}$};
\draw (18.5,42) -- (20.5,42); \node at (19.5,41.6) {$a$}; \draw (18,42.5) -- (18,44.5); \node at (17.7,43.5) {$a$};
\draw (15.5,42) -- (17.5,42); \node at (16.5,41.6) {$c$}; \draw (18,39.5) -- (18,41.5); \node at (17.3,40.5) {$b$};
\draw (18.5,45) -- (20.5,45); \node at (19.5,44.6) {\textcolor{red}{$a+1$}}; \draw (21,42.5) -- (21,44.5); \node at (20.3,43.5) {\textcolor{red}{$a+1$}};
\draw (2.5,49) -- (4.5,49); \node at (3.5,48.6) {$a$}; \draw (2,49.5) -- (2,51.5); \node at (1.7,50.5) {$a$};
\draw (-0.5,49) -- (1.5,49); \node at (0.5,48.6) {$c$}; \draw (2,46.5) -- (2,48.5); \node at (1.3,47.5) {$b$};
\draw[ultra thick,->,red] (9,49) -- (11,49); \node at (10,48.3) {$\mathbf{\frac{t-t^{c(B)}}{1-t^{c(B)}}}$};
\draw (18.5,49) -- (20.5,49); \node at (19.5,48.6) {$a$}; \draw (18,49.5) -- (18,51.5); \node at (17.7,50.5) {$a$};
\draw (15.5,49) -- (17.5,49); \node at (16.5,48.6) {$c$}; \draw (18,46.5) -- (18,48.5); \node at (17.3,47.5) {$b$};
\draw (18.5,52) -- (20.5,52); \node at (19.5,51.6) {\textcolor{red}{$a$}}; \draw (21,49.5) -- (21,51.5); \node at (20.7,50.5) {\textcolor{red}{$a$}};
\end{tikzpicture}
\caption{Randomized elementary steps of the RSK algorithm from Definition \ref{def:refined}; $b$ and $c$ are arbitrary numbers less than $a-1$.}
\label{fig:exampRefRSK2}
\end{figure}
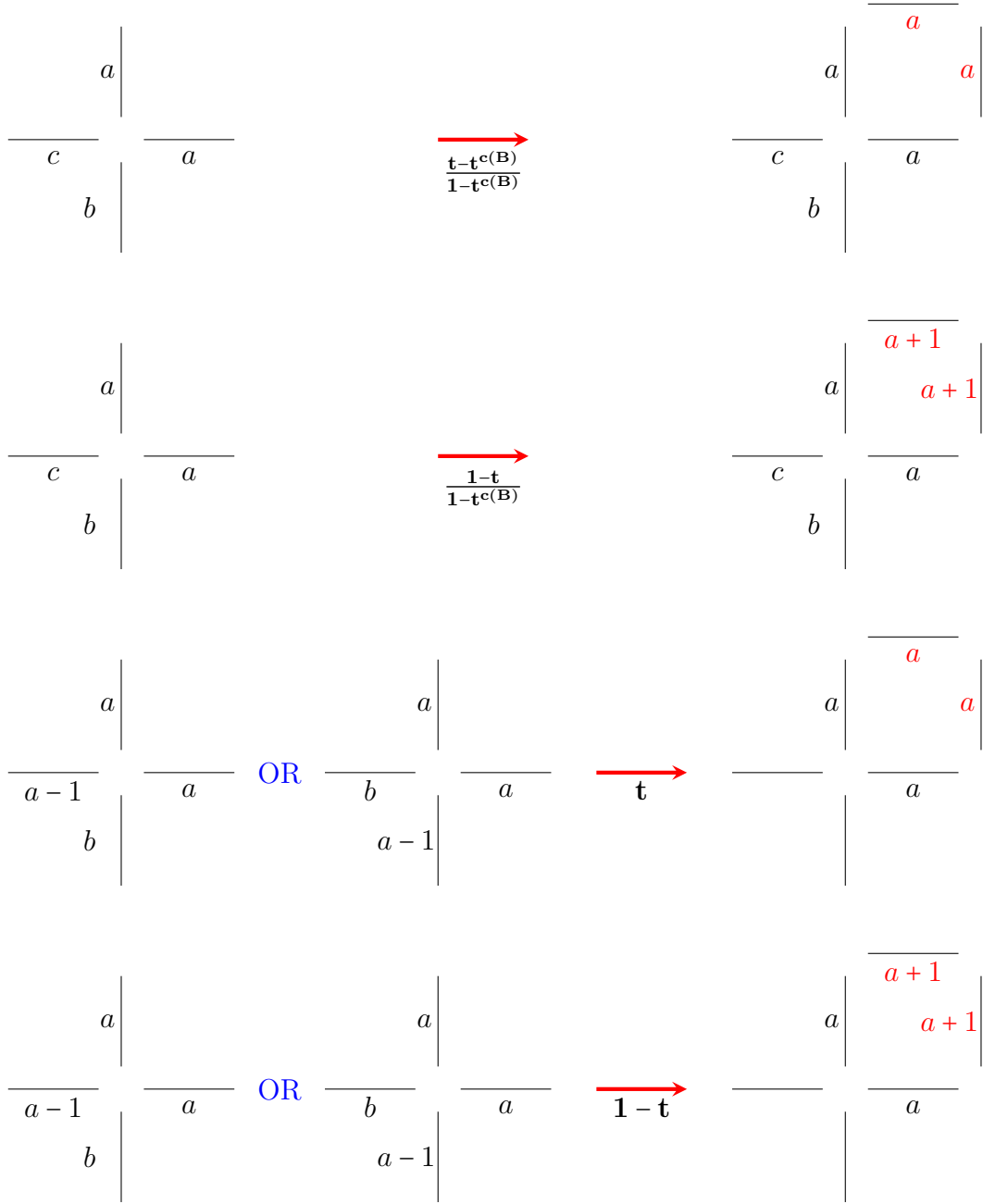

Definition \ref{def:RSK-main1} and Lemma \ref{lem:refined} directly imply that the algorithm from Definition \ref{def:refined} indeed outputs a signature $\tilde \rho$ with probability $U(\mu \to \tilde \rho \mid \la \to \nu)$. As a corollary, we obtain
\begin{equation}
\label{eq:sumUeq1}
\sum_{\tilde \rho} U(\mu \to \tilde \rho \mid \la \to \nu) = 1 \qquad \text{for fixed } \la, \nu, \mu, \text{ such that } \la \prec \nu, \mu.
\end{equation}

Let us address several properties of the coefficients $U(\mu \to \rho \mid \la \to \nu)$.

\begin{proposition}[Symmetry]
\label{prop:symmetry}
We have
$$
U(\mu \to \rho \mid \la \to \nu) =  U(\nu \to \rho \mid \la \to \mu)
$$
\end{proposition}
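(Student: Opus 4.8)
The plan is to prove the symmetry $U(\mu \to \rho \mid \la \to \nu) = U(\nu \to \rho \mid \la \to \mu)$ by exploiting the geometric symmetry built into the admissible function construction. Observe that the definition of $U$ depends on the four signatures through a single admissible function $\Lambda$ on $V_{m,n}$ (where $m = |\nu|-|\la|$, $n = |\mu|-|\la|$), with $\Lambda(0,0) = \la$, $\Lambda(m,0) = \nu$, $\Lambda(0,n) = \mu$, $\Lambda(m,n) = \rho$. Swapping $\mu \leftrightarrow \nu$ amounts to transposing the rectangle: it sends the pair $(m,n)$ to $(n,m)$ and interchanges the roles of the corner values $\nu = \Lambda(m,0)$ and $\mu = \Lambda(0,n)$, while fixing the two diagonal corners $\la = \Lambda(0,0)$ and $\rho = \Lambda(m,n)$. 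Concretely, I would define the reflection $\sigma: V_{n,m} \to V_{m,n}$ by $\sigma(i,j) = (j,i)$ and set $\Lambda^{\sigma} := \Lambda \circ \sigma$. The goal is to show that $\Lambda^{\sigma}$ is the (unique) admissible function realizing the swapped data and that it carries the same total weight.

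The key steps, in order, are as follows. First I would verify that the reflection interchanges the two defining increments: under $\sigma$ the horizontal-step quantities $h_{i,j}$ and the vertical-step quantities $v_{i,j}$ swap roles, i.e. $h^{\sigma}_{i,j} = v_{j,i}$ and $v^{\sigma}_{i,j} = h_{j,i}$. Second, I would check that the two admissibility conditions are symmetric under this interchange: condition (1) (the monotonicity $h_{i,j} < h_{i+1,j}$ and $v_{i,j} < v_{i,j+1}$) is manifestly preserved since the two inequalities are simply swapped, and condition (2) (the constraint on $h_{i,j+1}, v_{i+1,j}$ when $h_{i,j} = v_{i,j}$) is already stated symmetrically in $h$ and $v$. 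Hence $\Lambda^{\sigma}$ is admissible, and by the uniqueness remark in Section \ref{sec:descRSK} it is \emph{the} admissible function for the swapped boundary data. Third — and this is the crux — I would check that the box weights $w(B)$ from Definition \ref{def:RSK-main1} are invariant under the reflection. Each box $b_{i,j}$ maps to $b_{j,i}$, and I must confirm that the weight rules are symmetric under exchanging $h \leftrightarrow v$ and the west/south neighbors $b_{i-1,j} \leftrightarrow b_{i,j-1}$. Inspecting the rules: the box types (zero/one/trivial) depend symmetrically on $\{h_{i,j}, v_{i,j}, h_{i,j+1}, v_{i+1,j}\}$; cases (1)(b), (2), (3) all reference ``$b_{i-1,j}$, $b_{i,j-1}$'' or ``$h_{i-1,j}$, $v_{i,j-1}$'' symmetrically; the diagonal-box conditions (1)(c),(1)(d),(6) reference only $b_{i-1,j-1}$, which is fixed by $\sigma$; and the quantity $c(B) = \exp_t(\Lambda(i,j)'_{r(B)} - \Lambda(i,j)'_{r(B)+1})$ depends only on $\Lambda(i,j)$, which equals $\Lambda^{\sigma}(j,i)$. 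Thus $w(b_{i,j}) = w^{\sigma}(b_{j,i})$ for every box.

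Finally, since $U(\mu \to \rho \mid \la \to \nu) = \prod_{B \in B_{m,n}} w(B)$ and the product is over all boxes, the reflection gives a weight-preserving bijection between boxes of $V_{m,n}$ and boxes of $V_{n,m}$, so the two products coincide:
\[
U(\mu \to \rho \mid \la \to \nu) = \prod_{B \in B_{m,n}} w(B) = \prod_{B' \in B_{n,m}} w^{\sigma}(B') = U(\nu \to \rho \mid \la \to \mu).
\]
I expect the main obstacle to be the third step: one must carefully audit \emph{every} branch of the weight definition in Definition \ref{def:RSK-main1}, including the asymmetric-looking conditions that single out $h_{i-1,j}$ versus $v_{i,j-1}$, and confirm that each is stated (or can be rewritten) symmetrically under the $h \leftrightarrow v$ swap. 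The rules involving ``exactly one of $h_{i-1,j}, v_{i,j-1}$ equals $r(B)-1$'' are the most delicate, but this condition is itself symmetric in the pair, so it survives the reflection; the bookkeeping, rather than any genuine asymmetry, is where the care is needed.
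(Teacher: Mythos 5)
Your proposal is correct and is essentially the paper's own proof: the paper simply observes that all rules of Definition \ref{def:RSK-main1} are symmetric with respect to swapping the horizontal and vertical directions, which is exactly the reflection $\sigma(i,j)=(j,i)$ argument you spell out in detail. Your careful audit of the admissibility conditions and of each branch of the weight definition is a fuller write-up of the same one-line observation.
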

\begin{proof}
Note that all rules of Definition \ref{def:RSK-main1} are symmetric with respect to the swap of horizontal and vertical directions. This immediately implies the statement.
\end{proof}

\begin{proposition}[Markov projection]
\label{prop:Markov-proj}
In the algorithm of Definition \ref{def:refined}, for any $k \in \Z$ the probability distribution of the set $\{ \tilde \rho'_a \}_{a \le k}$ is determined by sets $\{ \la'_a \}_{a \le k}$, $\{ \mu'_a \}_{a \le k}$, $\{ \nu'_a \}_{a \le k}$ only.
\end{proposition}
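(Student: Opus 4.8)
The plan is to exhibit the projection $\kappa \mapsto \{\kappa'_a\}_{a \le k}$ as the state of an \emph{autonomous} randomized subsystem inside the refined algorithm of Definition \ref{def:refined}. First I would record what this projection remembers: since $\kappa'_a - \kappa'_{a+1}$ is the number of coordinates of $\kappa$ equal to $a$, the data $\{\kappa'_a\}_{a\le k}$ is exactly the multiset of coordinates of $\kappa$ that are $\le k-1$, together with the total number $\kappa'_k$ of coordinates that are $\ge k$. In particular, any move of a particle among positions $\ge k$ (i.e.\ a one-box or zero-box $B$ with $r(B) \ge k$) leaves $\{\kappa'_a\}_{a\le k}$ unchanged.

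Next I would classify the boxes of the admissible function produced by the algorithm. Call a box $B = b_{i,j}$ \emph{low} if $r(B) \le k-1$ and \emph{high} otherwise (a trivial box moving only particles at positions $\ge k$ is counted as high). The heart of the argument is the claim that every low box can be processed using only the projected data. Indeed, if $r(B) \le k-1$, then by admissibility $h_{i-1,j} < h_{i,j} = r(B)$ and $v_{i,j-1} < v_{i,j} = r(B)$, so both quantities compared against $r(B)-1$ in Definition \ref{def:RSK-main1} are themselves $\le k-1$; moreover $c(B) = \exp_t(\Lambda(i,j)'_{r(B)} - \Lambda(i,j)'_{r(B)+1})$ is determined by $\{\Lambda(i,j)'_a\}_{a\le k}$ because $r(B), r(B)+1 \le k$; and each neighbouring box referenced in the weight rules (those $b_{i-1,j-1}$ with $\overline r = r(B)-1$, and the one-boxes $b_{i-1,j}, b_{i,j-1}$) again has its relevant $r$-value $\le k-1$. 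Hence both the type of a low box and its weight $w(B)$, equivalently the transition probabilities of the corresponding step of Definition \ref{def:refined}, are functions of the projection $\{\Lambda(i,j)'_a\}_{a\le k}$ alone.

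It remains to decouple the high part. Because every move shifts a particle by $+1$, no particle ever descends; consequently the content of the region $\{\text{positions} \le k-1\}$ can only decrease (particles cross $k-1 \mapsto k$ via low one-boxes) and is never fed by high boxes, so the projected configuration $\{\Lambda'_a\}_{a\le k}$ is preserved by every high box. Combining this with the previous paragraph, I would run the refined algorithm step by step while tracking only $\{\Lambda'_a\}_{a\le k}$: each low step updates this data with a probability depending only on its current value, while each high step leaves it fixed and has internal choices of total probability $1$ by the construction of Definition \ref{def:refined}. By the standard lumping argument for an autonomous subsystem of a Markov chain, the process $\{\Lambda'_a\}_{a\le k}$ is itself Markov, with initial data read off from $\{\la'_a\}_{a\le k}$, $\{\nu'_a\}_{a\le k}$ (bottom boundary) and $\{\mu'_a\}_{a\le k}$ (left boundary). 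Marginalizing over the high randomness therefore yields a distribution of $\{\tilde\rho'_a\}_{a\le k}$ depending only on $\{\la'_a\}_{a\le k}$, $\{\mu'_a\}_{a\le k}$, $\{\nu'_a\}_{a\le k}$, as claimed.

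The main obstacle I anticipate is making the decoupling fully rigorous: one must check that whether a given lattice vertex is eventually assigned a low or a high value, and the value itself, is not influenced by the outcomes of high boxes, i.e.\ that the low boxes genuinely form a subsystem that can be run knowing only the projected input. This is where the monotonicity of $h_{i,j}$ in $i$ and of $v_{i,j}$ in $j$, together with the fact that the weight rules of Definition \ref{def:RSK-main1} only ever compare neighbouring values differing by at most one from $r(B)$, must be used to guarantee that the low region is processed without reference to any high datum; the trivial boxes, which carry weight $1$ and two unequal particle positions, require a small separate check to confirm they never create such a dependence.
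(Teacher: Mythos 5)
Your argument is correct, and it rests on the same two observations that drive the paper's proof: a move of a particle from a position $\ge k$ never changes $\{\Lambda'_a\}_{a\le k}$, and the randomized decision at a box $B$ with $r(B)\le k-1$ involves only data visible in the projection (admissibility forces $h_{i-1,j},v_{i,j-1}<r(B)\le k-1$, the neighbouring boxes referenced in the weight rules have their relevant $r$-values below $r(B)$, and $c(B)$ uses only $\Lambda'_{r(B)},\Lambda'_{r(B)+1}$ with $r(B)+1\le k$). The packaging differs, however. The paper exploits monotonicity of the edge labels ($h_{i,j}$ strictly increasing in $i$ and non-decreasing in $j$, symmetrically for $v$) to cut the rectangle at the corner $(I,J)$, where $I,J$ are the first indices at which the boundary labels exceed $k$: all edges of $[I,m]\times[J,n]$ carry labels $>k$, so $\{\Lambda(m,n)'_a\}_{a\le k}=\{\Lambda(I,J)'_a\}_{a\le k}$, and it then concludes in one sentence that the distribution of $\Lambda(I,J)$ is determined by the projected data. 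You instead run a box-by-box lumping argument over the whole rectangle, with no $(I,J)$. Your route is longer but supplies rigor exactly where the paper is terse: read literally, the paper's final sentence cannot be right (already $\Lambda(0,0)=\la$ is not determined by its projection, and boxes with $r(B)\ge k$ can occur inside $[0,I]\times[0,J]$, their random outcomes depending on data such as $\Lambda'_{k+1}$ through $c(B)$); what is true, and what the proof needs, is that the law of the \emph{projection} of $\Lambda(I,J)$ is so determined, and justifying this is precisely your decoupling of low and high boxes. One point to make explicit in a final write-up: the lumped state should include the truncated edge labels along the growth front (each label known exactly when $\le k-1$, otherwise only marked ``high''), but this costs nothing, since the projections at two consecutive vertices determine the truncated label of the edge joining them, and a label that is high on one side of a trivial or high box stays high on the other.
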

\begin{proof}
Let $I$ be the minimal non-negative integer such that $I=m$ or $h_{I, 0}> k$, and $J$ be the minimal non-negative integer such that $J=n$ or $v_{0, J}> k$. Then $\{ \Lambda(m, n)'_a \}_{a \le k} = \{ \Lambda(I, J)'_a \}_{a \le k}$, since all (horizontal or vertical) edges in the lattice rectangle $[I, m] \times [J, n]$ have labels $>k$. However, the distribution of $\Lambda(I, J)$ is determined only by the sets $\{ \la'_a \}_{a \le k}$, $\{ \mu'_a \}_{a \le k}$, $\{ \nu'_a \}_{a \le k}$, hence the statement follows.
\end{proof}

The key role is played by the \textit{flip property} which will be addressed in the next subsection.

\subsection{Flip property}
\label{sec:flip}

For $\la \in \Sig_N$ define the {\it reversed signature} $-\la:=(-\la_{N}, \ldots, -\la_{1}) \in Sig_N$. Clearly, $\la \prec \nu \Leftrightarrow -\nu \prec - \la$. If $\Lambda$ is an admissible
function on all $V_{m,n}$ with $\Lambda(0, 0) = \la$, $\Lambda(m, 0) = \nu$, $\Lambda(0, n) = \mu$, $\Lambda(m, n) = \rho$ (see Section \ref{sec:descRSK}), then we can define the admissible function $\overline{\Lambda}$ with $\overline{\Lambda}(0, 0) = -\rho$, $\overline{\Lambda}(m, 0) = -\mu$, $\overline{\Lambda}(0, n) = -\nu$, $\overline{\Lambda}(m, n) = -\la$ by
setting
\begin{align*}
\overline{\Lambda}_{i, j} := -\Lambda_{m-i, n-j}
\end{align*}

\begin{proposition}[Flip Property] We have
\label{lemma:flip}
\begin{align}
U(\mu \to \rho \mid \la \to \nu) \frac{\phi_{\nu/\la}\psi_{\mu/\la}}{\phi_{\rho/\mu}\psi_{\rho/\nu}} = U(-\nu \to -\la \mid -\rho \to -\mu).
\label{eq:flip}
\end{align}
\end{proposition}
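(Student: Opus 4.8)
The plan is to exhibit both sides as products of box weights over a single pair of admissible functions related by the reversal $\overline\Lambda_{i,j}:=-\Lambda_{m-i,n-j}$, and then to compare the two products. First I would check that $\overline\Lambda$ is exactly the admissible function computing the right-hand side. Since $|\nu|-|\la|=|\rho|-|\mu|$ (whence also $|\mu|-|\la|=|\rho|-|\nu|$), the reversed rectangle has the same dimensions $m\times n$; the boundary values $\overline\Lambda(0,0)=-\rho$, $\overline\Lambda(m,0)=-\mu$, $\overline\Lambda(0,n)=-\nu$, $\overline\Lambda(m,n)=-\la$ are precisely the data of $U(-\nu\to-\la\mid-\rho\to-\mu)$; and admissibility is preserved because $\la\prec\nu\Leftrightarrow-\nu\prec-\la$ and the two admissibility conditions are invariant under $(i,j)\mapsto(m-i,n-j)$ together with negation. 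A direct computation of edge labels gives $\overline h_{m-i-1,n-j}=-h_{i,j}-1$ and $\overline v_{m-i,n-j-1}=-v_{i,j}-1$, from which the box $b_{i,j}$ and its image $\overline b_{m-i-1,n-j-1}$ are seen to have the same type (zero, one, or trivial), with $r$ and $\overline r$ transformed by $a\mapsto-a-1$.

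Second, I would establish the reversal identities for the branching coefficients, namely $\phi_{-\la/-\nu}=\psi_{\nu/\la}$ and $\psi_{-\la/-\nu}=\phi_{\nu/\la}$ whenever $\la\prec\nu$. Each follows from the defining products \eqref{eq:def-psi}, \eqref{eq:def-phi} by the substitution $(k,j)\mapsto(N+1-j,\,N+1-k)$, under which the index condition $\nu_{j+1}<\la_j=\la_k<\nu_k$ of $\psi_{\nu/\la}$ becomes the condition defining $\phi_{-\la/-\nu}$ while the exponent $j-k+1$ is preserved. Applied to the four boundary paths, these identities re-express the correction factor $\phi_{\nu/\la}\psi_{\mu/\la}/(\phi_{\rho/\mu}\psi_{\rho/\nu})$ through the branching functions of the reversed signatures, which is the form in which it must appear on the right-hand side.

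The heart of the argument, and the step I expect to be the main obstacle, is the comparison of the two weight products $\prod_B w(B)$ and $\prod_{\overline B}\overline w(\overline B)$. The weight rules of Definition \ref{def:RSK-main1} are causal in the south-west $\to$ north-east direction: $w(b_{i,j})$ depends on $b_{i-1,j}$, $b_{i,j-1}$, $b_{i-1,j-1}$ and on $c(B)=t^{\,\Lambda(i,j)'_{r(B)}-\Lambda(i,j)'_{r(B)+1}}$, whereas under the reversal these dependencies turn into north-east neighbours, so the two products are genuinely different and cannot be matched factor by factor. Conceptually, the goal is a telescoping (a discrete conservation law): assign to each unit edge $e$ a factor $\ell(e)$ so that the products of $\ell$ along the bottom and left boundaries reproduce $\phi_{\nu/\la}$ and $\psi_{\mu/\la}$, the products along the top and right boundaries reproduce $\phi_{\rho/\mu}$ and $\psi_{\rho/\nu}$, and for each box one has the local balance $w(B)\,\ell(\text{bottom})\,\ell(\text{left})=\overline w(\overline B)\,\ell(\text{top})\,\ell(\text{right})$; multiplying over all boxes cancels every interior edge and yields $\prod_{\overline B}\overline w(\overline B)/\prod_B w(B)=\phi_{\nu/\la}\psi_{\mu/\la}/(\phi_{\rho/\mu}\psi_{\rho/\nu})$, which is the claim.

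Because $c(B)$ and the one-box chain rules introduce dependence beyond the four edges of a single box, this local balance will not hold verbatim, and the practical execution is a matching organized \emph{level by level}: group the boxes by the common value $r(B)=a$, which is legitimate in view of the Markov projection of Proposition \ref{prop:Markov-proj}, under which the passage through level $a$ is governed only by the columns of height $\le a$. Within a fixed level one matches the dynamical factors $t$, $1-t$, $\tfrac{t-c(B)}{1-c(B)}$, $\tfrac{1-t}{1-c(B)}$ against the $(1-t^{\bullet})$ factors that level $a$ contributes to $\phi$ and $\psi$, using that $c(B)$ is exactly $t$ raised to the length of the relevant run of equal coordinates; the reversal identities of the second step guarantee that the level-$a$ contribution to the south-west boundary factor of $\Lambda$ coincides with the level-$a$ contribution to the north-east boundary factor of $\overline\Lambda$. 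Carrying this matching through every level, together with the box-type correspondence of the first step, completes the proof.
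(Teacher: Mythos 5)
Your first two steps are correct and match the paper's setup: the reversal $\overline{\Lambda}_{i,j}=-\Lambda_{m-i,n-j}$ carries the admissible function computing the left-hand side to the one computing the right-hand side, box types are preserved under the relabelling $a\mapsto -a-1$, and the identities $\psi_{-\la/-\nu}=\phi_{\nu/\la}$, $\phi_{-\la/-\nu}=\psi_{\nu/\la}$ do follow from the index substitution you describe. But your third step --- which you yourself call the heart of the argument --- is a statement of the goal, not a proof. After conceding that the edge-telescoping local balance ``will not hold verbatim,'' you replace it by a ``level by level'' matching that is never executed, and the two concrete assertions you attach to it are both faulty. The appeal to Proposition \ref{prop:Markov-proj} does not legitimize the grouping: that proposition concerns the distribution of the projected output of the randomized algorithm, not a factorization of the ratio $\prod_{B}\overline{w}(\overline{B})/\prod_{B}w(B)$ of weights of a single admissible configuration into level-dependent pieces; producing such a factorization is exactly the difficulty. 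Moreover, your description of what cancels against what is wrong: the plain factors $t$ and $1-t$ (rules 2 and 3 of Definition \ref{def:RSK-main1}, together with the factors $t$ and $1-t$ extracted from $\frac{t-c(B)}{1-c(B)}$ and $\frac{1-t}{1-c(B)}$) do not match against the $(1-t^{\bullet})$ factors of $\phi$ and $\psi$; they must cancel between the two $U$-products themselves, and that cancellation is nonlocal.

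Concretely, the missing content is what the paper's proof actually does. Writing both sides as $f(t)\prod_{k}f_{k}(C_{k})$ and $g(t)\prod_{k}g_{k}(C_{k})$ with $C_{k}=t^{\la'_{k}-\la'_{k+1}}$, one must first show $f(t)=g(t)=t^{N}(1-t)^{M}$, where $N$ and $M$ count connected components of explicit graphs on zero-boxes and on one-boxes; the point is that the chain rules of weight $1$ (items 1(b)--(d)) and weight $0$ (item 6) force each component to contribute its factor $t$ (resp.\ $1-t$) only at its south-western box on one side of the identity and only at its north-eastern box on the other. This is precisely how the nonlocal dependence you flagged is resolved, and nothing in your plan addresses it. Next, for each level $k$ one must prove that at most one box in the entire rectangle contributes a $C_{k}$-dependent factor (a genuine, if short, argument by contradiction), and then match that contribution case by case against those of $\phi_{\nu/\la}$, $\psi_{\mu/\la}$, $\phi_{\rho/\mu}^{-1}$, $\psi_{\rho/\nu}^{-1}$; finally one must also treat the four boundary configurations in which no box contributes a $C_{k}$-factor yet the $\phi/\psi$ contributions are nontrivial and have to cancel among themselves. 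Without these steps the identity is asserted rather than proved.
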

\begin{proof}
The left-hand side and the right-hand side of equality \eqref{eq:flip} can be expressed as, respectively,
$f(t) \prod_{k \in \mathbb{Z}} f_{k}(C_{k})$ and $g(t) \prod_{k \in \mathbb{Z}} g_{k}(C_{k})$, where $f(t), g(t)$ are polynomials in $t$, and $f_{k}, g_{k}$ are rational functions in variable $C_{k} = \exp_{t}\left(\lambda'_{k} - \lambda'_{k+1}\right)$ such that $f_{k}(0) = g_{k}(0) = 1$. Note that in a weight of the form $ (1-t)/ (1 - t^{p}C_k)$ the numerator contributes to $f(t)$, while the denominator contributes to $f_k(C_{k})$ function. Analogously, in a weight of the form $ (t - t^{p}C_k) / (1 - t^{p}C_k)$ the factor $t$ contributes to $f(t)$, while the rest contributes to $f_k(C_{k})$. $p$ here is some integer which depends on the assignment of labels. Coefficients $\phi_{\nu/\la}, \psi_{\mu/\la}, \phi_{\rho/\mu}, \psi_{\rho/\nu}$ don't contribute anything to $f(t), g(t), g_{k}(t)$: their whole contribution goes to $f_{k}(C_{k})$.

In the proof, we analyze the combinatorics of the admissible function $\Lambda$ with $\Lambda(0, 0) = \la$, $\Lambda(m, 0) = \nu$, $\Lambda(0, n) = \mu$, $\Lambda(m, n) = \rho$. First, let us analyze $f(t)$ and $g(t)$. We claim that $f(t) = g(t) = t^{N}(1-t)^{M}$, where numbers $N$, $M$ have the following combinatorial interpretations.

Connect zero-boxes $B_{1}$, $B_{2}$ by an edge if the north-eastern corner of $B_{1}$ coincides with the south-western corner of $B_{2}$ and $\overline{r}(B_{1}) = r(B_{2})-1$. Then $N$ is the number of connected components in the graph of zero-boxes with such edges, since from each component only the south-western box contributes $t$ to $f(t)$ and only the north-eastern box contributes $t$ to $g(t)$.

Connect one-boxes $B_{1}$, $B_{2}$ by an edge if either $B_{1}$ is adjacent to the southern or western edge of $B_2$, or $\overline{r}(B_{1}) = r(B_{2})-1$ and the north-eastern corner of $B_{1}$ coincides with the south-western corner of $B_{2}$. Note that if a one-box $B$ is connected to its immediate southern and western (northern and eastern) neighbors, then it is connected to the south-western (north-eastern, respectively) neighbor. Then $M$ is the number of connected components in the graph of one-boxes with such edges. It is easy to check that each such component contains a unique box not connected to any its southern, western and south-western neighbors. This box contributes $1-t$ to $f(t)$.  Each component also contains a unique box not connected to any of its northern, eastern and north-eastern neighbors. This box contributes $1-t$ to $g(t)$. See Figure \ref{fig:FlipProofEx} for an example of an arrangement of boxes.
\begin{figure}[h]
\includegraphics[width=0.6\textwidth]{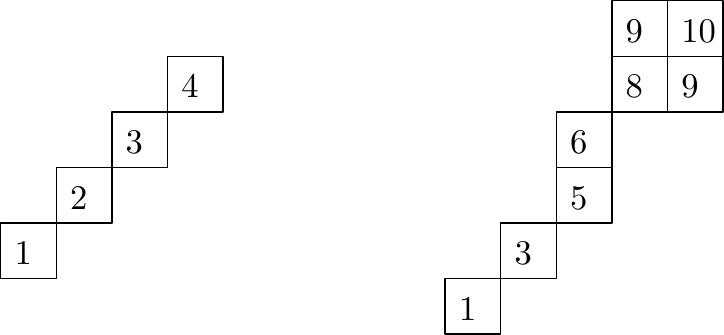}
\caption{On the left: an example of connected component for the graph of zero-boxes. On the right: an example of connected component for the graph of one-boxes. In both cases number in each box is the corresponding $r(B)$.}
\label{fig:FlipProofEx}
\end{figure}

Now we need to check that $f_{k} = g_{k}$. There are four types of boxes that could potentially contribute terms with $C_{k}$ to either the left-hand side (types (1) and (2) below) or the right-hand side (types (3) and (4) below) of \eqref{eq:flip}:
\begin{enumerate}
\item
One-box $B=b_{i, j}$ with $r(B) = k$ and $h_{i-1, j}, v_{i, j-1} < k-1$. Then $h_{i, 0}=v_{0, j} =k$, $h_{i, n}, v_{m, j} \geq k+1$.

\item
Zero-box $B=b_{i, j}$ with $r(B) = k$ and $h_{i-1, j}, v_{i, j-1} < k-1$. Then $h_{i, 0}= h_{i, n} = v_{0, j} = v_{m, j} = k$.

\item
One-box $B=b_{i, j}$ with $\overline{r}(B) = k-1$ and $h_{i+1, j+1}, v_{i+1, j+1} > k$. Then $h_{i, n}=v_{m, j} = k-1$, $h_{i, 0}, v_{0, j} \leq k-2$.

\item
Zero-box $B=b_{i, j}$ with $\overline{r}(B) = k-1$ and $h_{i+1, j+1}, v_{i+1, j+1} > k$. Then $h_{i, 0}= h_{i, n} = v_{0, j} = v_{m, j} = k-1$.

\end{enumerate}
It is an easy (though somewhat tedious) exercise to check that only one of the four types can be present, and there can be only one box of this type. For instance, suppose that there are $B_{1}=b_{i_{1}, j_{1}}$ of type $1$ and $B_{2}=b_{i_{2}, j_{2}}$ of type $4$. Then $h_{i_{1}, 0} = k > k-1 = h_{i_{2}, 0} \Rightarrow i_{2} < i_{1}$, $v_{0, j_{1}} = k > k-1 = v_{0, j_{2}} \Rightarrow j_{2} < j_{1}$. But then $k-1 = h_{i_{2}, j_{2}+1} \leq h_{i_{1}-1, j_{1}} < k-1$. Contradiction.

Consider the corresponding cases:
\begin{enumerate}
\item
$g_{k}=1$, in $f_k$ the contribution from the box  is $\frac{1}{1-C_{k}}$, the contribution of $\psi_{\mu/\lambda}$ is $1-C_{k}$, the contributions of $\phi_{\nu/\la}$, $\phi_{\rho/\mu}^{-1}$, $\psi_{\rho/\la}^{-1}$ are $1$.

\item
$g_{k}=1$, in $f_k$ the contribution from the box  is $\frac{1 - t^{-1}C_{k}}{1-C_{k}}$, the contribution of $\psi_{\mu/\lambda}$ is $1-C_{k}$, the contribution of  $\psi_{\rho/\la}^{-1}$ is $\frac{1}{1-t^{-1}C_{k}}$, the contributions of $\phi_{\nu/\la}$, $\phi_{\rho/\mu}^{-1}$ are $1$.

\item
$g_{k} = \frac{1}{1-tC_{k}}$,  in $f_k$ the contribution of $\phi_{\rho/\mu}^{-1}$ is also $\frac{1}{1-tC_{k}}$, the contributions of $\phi_{\nu/\la}$, $\psi_{\mu/\la}$, $\psi_{\rho/\nu}^{-1}$ are $1$.

\item
$g_{k} = \frac{1-tC_{k}}{1-t^{2}C_{k}}$, in $f_k$ the contribution of $\phi_{\rho/\mu}^{-1}$ is $\frac{1}{1-t^{2}C_{k}}$, the contribution of $\phi_{\nu/\la}$ is $1-tC_{k}$, the contributions of $\psi_{\mu/\la}$, $\psi_{\rho/\nu}^{-1}$ are $1$.

\end{enumerate}

If none of the four types of boxes are present, but not all contributions from $\phi_{\nu/\la}$, $\psi_{\mu/\la}$, $\phi_{\rho/\mu}^{-1}$, $\psi_{\rho/\nu}^{-1}$  are $1$, then one of the following cases takes place:
\begin{enumerate}
\item
$h_{i, 0} = h_{i, n} = k-1 < h_{i+1, 0}-1, h_{i+1, n}-1$ for some $i$, and no $v_{0, j}$ or $v_{m, j}$ is $k$ or $k-1$. Then the contribution of $\phi_{\nu/\la}$ is $1-tC_{k}$, the contribution of $\phi_{\rho/\mu}^{-1}$ is $\frac{1}{1-tC_{k}}$, the contributions of   $\psi_{\mu/\la}$, , $\psi_{\rho/\nu}^{-1}$ are $1$.
\item
$v_{0, j} = v_{m, j} = k > v_{0, j-1}+1, v_{m, j-1}+1$ for some $j$, and no $h_{i, 0}$ or $h_{i, n}$ is $k-1$ or $k$. Then the contribution of $\psi_{\mu/\la}$ is $1-C_{k}$, the contribution of $\psi_{\rho/\nu}^{-1}$ is $\frac{1}{1-C_{k}}$, the contributions of  $\phi_{\nu/\la}$, $\phi_{\rho/\mu}^{-1}$ are $1$.
\item
$h_{i, 0} = h_{i, n} = k-1 < h_{i+1, 0}-1, h_{i+1, n}-1$ and $v_{0, j} = v_{m, j} = k > v_{0, j-1}+1, v_{m, j-1}+1$ for some $i, j$. Then the contribution of $\psi_{\mu/\la}$ is $1-C_{k}$, the contribution of $\psi_{\rho/\nu}^{-1}$ is $\frac{1}{1-tC_{k}}$, the contribution of  $\phi_{\nu/\la}$ is $1-tC_{k}$, the contribution of $\phi_{\rho/\mu}^{-1}$ is $\frac{1}{1-C_{k}}$.
\item
$h_{i, 0} = v_{0, j} = k-1<h_{i+1, 0}-1, v_{0, j+1}-1$ for some $i, j$. Then there is a (unique) one-box $B=b_{I, J}$ with $r(B) = k-1$ such that $h_{I, n} = v_{m, J} = k$, and $v_{m, J-1} \le v_{I, J-1}<k-1$. Then the contribution of $\phi_{\nu/\la}$ is $1-tC_{k}$, the contribution of $\psi_{\rho/\nu}^{-1}$ is $\frac{1}{1-tC_{k}}$, the contributions of  $\phi_{\nu/\la}$, $\phi_{\rho/\mu}^{-1}$ are $1$.

\end{enumerate}

This concludes the proof that $f_{k} = g_{k}$ for any $k$, which implies the proposition.

\end{proof}

\subsection{Another description of the dynamics}
\label{sec:1cDescr}

In this section we present a slightly different description of $U(\mu \to \rho \mid \lambda \to \nu)$ by essentially ``collapsing'' the vertical coordinate. This description seems simpler than the one from Section \ref{sec:descRSK}. However, it is less suitable for certain questions: In particular, the symmetry property \ref{prop:symmetry} is not immediate from such a description.

For $1 \leq i \leq m$, let $\mathcal{U}_{i} = \prod_{j=0}^{n-1}w(b_{i-1, j})$, $d_{i} = h_{i-1, 0}$, $u_{i} = h_{i-1, n}$. Then $U(\mu \to \rho \mid \lambda \to \nu) = \prod_{i=1}^{m}\mathcal{U}_{i}$. We obtain $\nu$ from $\lambda$ by making $m$ particle moves, and we can say that they consecutively trigger $m$ random particle moves for $\mu$, such that the resulting signature is $\rho$ with probability $U(\mu \to \rho \mid \lambda \to \nu)$. More precisely,  first $\Lambda(0, 0) \to \Lambda(1, 0)$ triggers  $\Lambda(0, n) \to \Lambda(1, n)$  with probability $\mathcal{U}_{1}$, then $\Lambda(1, 0) \to \Lambda(2, 0)$ triggers  $\Lambda(1, n) \to \Lambda(2, n)$ with probability $\mathcal{U}_{2}$, $\ldots$, finally $\Lambda(m-1, 0) \to \Lambda(m, 0)$  triggers  $\Lambda(m-1, n) \to \Lambda(m, n)$ with probability $\mathcal{U}_{m}$. Here transition $\Lambda(i-1, 0) \to \Lambda(i, 0)$ means that on the lower level the first particle at position $d_i$ moves by $+1$, while transition $\Lambda(i-1, n) \to \Lambda(i, n)$ means that on the upper level the first particle at position $u_i$ moves by $+1$. Let $C(k)=t ^{\la'_{k}-\la'_{k+1}}$, $\widehat{C}(k)=t ^{\mu'_{k}-\mu'_{k+1}}$ and we will use convention $d_{0} = u_{0} = -\infty$. Let $k_{i}$ be the smallest integer $k \geq d_{i}$ such that $\Lambda(i-1, n)'_{k} > \Lambda(i-1, n)'_{k+1} = \Lambda(i-1, 0)'_{k+1}$. In the Schur case ($t=0$) the move of the first particle at position $d_i$ on the lower level by $+1$ necessarily triggers the move of the first particle at position $k_i$ on the upper level by $+1$ (see e.g. \cite[Section 4.3]{MP}).
\begin{proposition}
\begin{equation*}
\mathcal{U}_{i}:=
    \begin{cases}
     \frac{1-t}{1-C(d_{i})}1_{u_{i} = k_{i}} + \frac{t-C(d_{i})}{1-C(d_{i})}1_{u_{i} = d_{i}} & \parbox[t]{.6\columnwidth}{if $u_{i-1} < d_{i}-1$, $\widehat{C}(d_{i}) = t^{-1}C(d_{i})$.} \\ \\
      (1-t)1_{u_{i} = k_{i}} + t1_{u_{i} = d_{i}} & \parbox[t]{.6\textwidth}{if $k_{i} > d_{i}$ and either
   \begin{enumerate}
   \item $u_{i-1} < d_{i}-1$, $\widehat{C}(d_{i}) = C(d_{i})$,  or
   \item $u_{i-1}=d_{i-1} = d_{i}-1$, $\widehat{C}(d_{i}) = t^{-1}C(d_{i})$.
   \end{enumerate}}
  \\ \\
         1_{u_{i} = k_{i}} & \parbox[t]{.6\textwidth}{if either
   \begin{enumerate}
   \item $u_{i-1} \geq d_{i}$, or
   \item $k_{i} = d_{i}$, or
   \item $d_{i-1} < u_{i-1} = d_{i}-1$.
   \end{enumerate}} \\ \\
    1_{u_{i} = d_{i}} & \parbox[t]{.6\textwidth}{if $u_{i-1} = d_{i-1} = d_{i} -1$, $k_{i} > d_{i}$, $\widehat{C}(d_{i}) = C(d_{i})$}.
            \end{cases}
\end{equation*}

\end{proposition}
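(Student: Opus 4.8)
The quantity to compute is $\mathcal{U}_i = \prod_{j=0}^{n-1} w(b_{i-1,j})$, the product over the single vertical column of boxes between abscissas $i-1$ and $i$, with weights as in Definition \ref{def:RSK-main1}. The plan is to show that this product telescopes to the weight of a single \emph{decisive} box, and then to translate the local data entering that weight into the collapsed quantities $d_i, u_i, u_{i-1}, d_{i-1}, k_i, C(d_i), \widehat C(d_i)$.

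First I would record the structure of the column. Reading the horizontal labels $h_{i-1,0}=d_i, \dots, h_{i-1,n}=u_i$ from bottom to top, the trichotomy of box types shows that $h_{i-1,j}$ is weakly increasing and jumps by $+1$ exactly across one-boxes, staying constant across trivial and zero boxes. Using admissibility (condition 2 and the fact that a box adjacent to a zero-box is trivial), I would show that either the one-boxes form a single contiguous chain lifting the label from $d_i$ up to the insertion position $k_i$, or the trigger merges through a zero-box at $d_i$ and never propagates. This gives the dichotomy $u_i\in\{d_i,k_i\}$ encoded by the two indicators, with $\mathcal{U}_i=0$ otherwise (forced by the weight-$0$ rule 6). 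Moreover, along the one-box chain rules 1(b),1(c) assign weight $1$ to every box above the bottom one, and everything above the chain is trivial; symmetrically, in the merging scenario rules 1(a),1(d) give weight $1$ except at the merging zero-box. Hence $\mathcal{U}_i=w(B)$ for a single box $B=b_{i-1,j_0}$ with $r(B)=d_i$, which is a one-box when $u_i=k_i$ and a zero-box when $u_i=d_i$.

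It then remains to evaluate $w(B)$ from Definition \ref{def:RSK-main1}, whose applicable case is governed by the types of the western and southern neighbours $b_{i-2,j_0},b_{i-1,j_0-1}$, by whether $h_{i-2,j_0}$ and $v_{i-1,j_0-1}$ equal $d_i-1$ or are both strictly smaller, and by $c(B)=\exp_{t}\!\big(\Lambda(i-1,j_0)'_{d_i}-\Lambda(i-1,j_0)'_{d_i+1}\big)$. The heart of the argument — and the step I expect to be the main obstacle — is the dictionary between this local data and the global quantities. I would show that the alternative $u_{i-1}<d_i-1$ versus $u_{i-1}=d_i-1$ (with its refinement $u_{i-1}=d_{i-1}=d_i-1$) controls precisely whether $h_{i-2,j_0},v_{i-1,j_0-1}$ lie strictly below $d_i-1$ or reach $d_i-1$, thereby selecting between the $c(B)$-dependent rules 4,5 and the plain rules 2,3; and that the multiplicity at the south-west corner of $B$ satisfies $\Lambda(i-1,j_0)'_{d_i}-\Lambda(i-1,j_0)'_{d_i+1}=\lambda'_{d_i}-\lambda'_{d_i+1}$, so that $c(B)=C(d_i)$. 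This requires tracking how the number of parts equal to $d_i$ is transported from the corners $\lambda=\Lambda(0,0)$ and $\mu=\Lambda(0,n)$ along the boundary to $(i-1,j_0)$; the two possibilities $\widehat C(d_i)=C(d_i)$ and $\widehat C(d_i)=t^{-1}C(d_i)$ then encode whether the previous column consumed a part at $d_i$, which is exactly the combinatorial content of the side conditions.

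With this dictionary the four cases match mechanically: rules 5,4 reproduce $\frac{1-t}{1-C(d_i)}1_{u_i=k_i}+\frac{t-C(d_i)}{1-C(d_i)}1_{u_i=d_i}$; rules 3,2 reproduce $(1-t)1_{u_i=k_i}+t\,1_{u_i=d_i}$; the deterministic one-box sub-cases of rule 1 (when $B$ is linked to a western or south-western one-box, i.e.\ $u_{i-1}\ge d_i$ or $d_{i-1}<u_{i-1}=d_i-1$, together with the degenerate $k_i=d_i$) give $1_{u_i=k_i}$; and the deterministic zero-box sub-case 1(d) gives $1_{u_i=d_i}$. The only genuinely delicate points are the count-transport bookkeeping of the previous paragraph and checking that the four side-condition patterns are exhaustive; the rest is the telescoping already built into Definition \ref{def:RSK-main1}.
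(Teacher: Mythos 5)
Your overall route coincides with the paper's own proof: collapse the column to the weight of a single decisive box $B=b_{i-1,j_0}$ with $r(B)=d_i$ (all other boxes being trivial, or a stack of one-boxes of weight $1$ above $B$), establish the dichotomy $u_i\in\{d_i,k_i\}$, and then translate the neighbour data of $B$ into the collapsed quantities $d_i,u_i,d_{i-1},u_{i-1},k_i,C(d_i),\widehat C(d_i)$. The paper executes exactly this plan by an explicit case analysis (four zero-box configurations and five one-box configurations), in each one computing $w(B)$ and simultaneously reading off which side conditions of the proposition hold.

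There is, however, a concrete error in the step you yourself single out as the heart of the argument. You claim that the alternative $u_{i-1}<d_i-1$ versus $u_{i-1}=d_i-1$ ``controls precisely whether $h_{i-2,j_0},v_{i-1,j_0-1}$ lie strictly below $d_i-1$ or reach $d_i-1$,'' and hence selects between the $c(B)$-dependent rules 4, 5 and the plain rules 2, 3 of Definition \ref{def:RSK-main1}. This is false: in the configuration $h_{i-2,j_0}<d_i-1$ but $v_{i-1,j_0-1}=d_i-1$ (the paper's zero-box case (3) and one-box case (2)) one has $u_{i-1}<d_i-1$, yet rules 2, 3 apply and the weight is $t$ (respectively $1-t$), with no dependence on $C(d_i)$. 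This configuration is exactly sub-case (1) of the second case of the proposition, and what separates it from the first case is not $u_{i-1}$ but whether $\widehat C(d_i)$ equals $C(d_i)$ or $t^{-1}C(d_i)$. The dictionary is genuinely two-dimensional: the western data ($h_{i-2,j_0}$ and the type of the box $b_{i-2,j_0}$) is encoded by $u_{i-1}$ and $d_{i-1}$, while the southern label $v_{i-1,j_0-1}$ reaching $d_i-1$ or not is encoded by $\widehat C(d_i)=C(d_i)$ versus $\widehat C(d_i)=t^{-1}C(d_i)$ (via the transport argument $v_{0,j_0}=v_{i-1,j_0}=d_i$, $v_{0,j_0-1}=v_{i-1,j_0-1}$); executing your plan with the one-parameter dictionary would misclassify the configurations above into the $C(d_i)$-dependent weights. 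Note that your final matching paragraph, which assigns rules 2, 3 to the whole of the proposition's second case, is internally inconsistent with your stated dictionary --- the fix is precisely the two-condition bookkeeping that the paper's nine sub-cases carry out. Two smaller inaccuracies: vanishing of $\mathcal{U}_i$ for $u_i\notin\{d_i,k_i\}$ comes from the non-existence of an admissible function, not from rule 6; rule 6's actual role is to kill one of the two admissible completions (the one-box option in the fourth case, the zero-box option in sub-case (3) of the third case), which is what makes those cases deterministic. And the identity $c(B)=C(d_i)$ holds only in the cases where both neighbour labels are below $d_i-1$ (there the moves leading from $\la$ to $\Lambda(i-1,j_0)$ all occur at positions $<d_i-1$); it fails, for instance, when $d_{i-1}=d_i-1$, so it must be stated with that restriction.
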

\begin{proof}
Let $\bar{\la} = \Lambda(i-1, 0)$, $\bar{\mu} = \Lambda(i-1, n)$. Then $\bar{\la} \prec \bar{\mu}$.
$k_{i} = d_{i} \Leftrightarrow d_{i} = \bar{\la}_{h} = \bar{\mu}_{h} < \bar{\la}_{h-1}$ for some
$h \Leftrightarrow  v_{i-1, j} \neq d_{i}$ for all $0 \leq j \leq n-1$. In such case all the boxes in the $i$-th column are trivial, so  $\mathcal{U}_{1} = 1$ and $h_{i-1, j} = d_{i}$ for all $0 \leq j \leq n-1$, in particular, $u_{i} = d_{i}$. For the following cases suppose that $k_{i} > d_{i}$, i.e. $v_{i-1, j} = d_{i}$ for some (unique) $0 \leq j \leq n-1$. Then $B = b_{i-1, j}$ is either a zero-box or a one-box, and $\mathcal{U}_{i} = w(B)$.
Suppose first that $B$ is a zero-box. Then there are no other nontrivial boxes in the $i$-th column. Then $u_{i} = d_{i}$ and there are several possibilities:
\begin{enumerate}
\item
$v_{i-1, j-1}, h_{i-2, j} < d_{i} -1$. Then $\mathcal{U}_{i} = \frac{t-C(d_{i})}{1-C(d_{i})}$;
$u_{i-1} = h_{i-2, j} < d_{i}-1$; $v_{0, j} = v_{i-1, j} = d_{i}$ and $v_{0, j-1} \leq v_{i-1, j-1} < d_{i}-1$, so
$\widehat{C}(d_{i}) = t^{-1}C(d_{i})$.

\item
$v_{i-1, j-1} < h_{i-2, j} = d_{i} -1$. Then $\mathcal{U}_{i} = t$; $d_{i-1} = u_{i-1} = h_{i-2, j} = d_{i}-1$; $v_{0, j} = v_{i-1, j} = d_{i}$ and $v_{0, j-1} \leq v_{i-1, j-1} < d_{i}-1$, so
$\widehat{C}(d_{i}) = t^{-1}C(d_{i})$.

\item
$h_{i-2, j} < v_{i-1, j-1} = d_{i} -1$. Then $\mathcal{U}_{i} = t$;  $u_{i-1} = h_{i-2, j} < d_{i}-1$;
$v_{0, j} = v_{i-1, j} = d_{i}$ and $v_{0, j-1} = v_{i-1, j-1} = d_{i}-1$, so
$\widehat{C}(d_{i}) = C(d_{i})$.

\item
$h_{i-2, j} = v_{i-1, j-1} = d_{i} -1$. Then $\mathcal{U}_{i} = 1$; $b_{i-2, j-1}$ is also a zero-box, so $u_{i-1} = d_{i-1} = d_{i}-1$;
$v_{0, j} = v_{i-1, j} = d_{i}$ and $v_{0, j-1} = v_{i-1, j-1} = d_{i}-1$, so
$\widehat{C}(d_{i}) = C(d_{i})$.

\end{enumerate}

Suppose now that $B$ is a one-box, the lowest of the stack of $r$ consecutive one-boxes in the $i$-th column. Then $v_{i-1, j+\ell} = h_{i-1, j+\ell+1}-1 = d_{i} + \ell$ for any $0 \leq \ell \leq r-1$,  $v_{i-1, j+r} > d_{i} +r$. So $u_{i} = h_{i-1, j+r} = d_{i}+r = k_{i}$, and there are several possibilities

\begin{enumerate}
\item
$v_{i-1, j-1}, h_{i-2, j} < d_{i} -1$. Then $\mathcal{U}_{i} = \frac{1-t}{1-C(d_{i})}$; $u_{i-1} = h_{i-2, j} < d_{i}-1$; $v_{0, j} = v_{i-1, j} = d_{i}$ and $v_{0, j-1} \leq v_{i-1, j-1} < d_{i}-1$, so
$\widehat{C}(d_{i}) = t^{-1}C(d_{i})$.

\item
$h_{i-2, j} < v_{i-1, j-1} = d_{i} -1$. Then $\mathcal{U}_{i} = 1-t$;
$u_{i-1} = h_{i-2, j} < d_{i}-1$;
$v_{0, j} = v_{i-1, j} = d_{i}$ and $v_{0, j-1} = v_{i-1, j-1} = d_{i}-1$, so
$\widehat{C}(d_{i}) = C(d_{i})$.

\item
$v_{i-1, j-1} < h_{i-2, j} = h_{i-2, j+1} = d_{i} -1$. Then $\mathcal{U}_{i} = 1-t$;  $d_{i-1} = h_{i-2, j} = h_{i-2, j+1} = u_{i-1} = d_{i}-1$; $v_{0, j} = v_{i-1, j} = d_{i}$ and $v_{0, j-1} \leq v_{i-1, j-1} < d_{i}-1$, so
$\widehat{C}(d_{i}) = t^{-1}C(d_{i})$.

\item
$v_{i-1, j-1} < h_{i-2, j} = h_{i-2, j+1}-1 = d_{i} -1$. Then $\mathcal{U}_{i} = 1$; $u_{i-1} \geq h_{i-2, j+1} = d_{i}$.

\item
$h_{i-2, j} = v_{i-1, j-1} = d_{i} -1$. Then $\mathcal{U}_{i} = 1$; $b_{i-2, j-1}$ is also a one-box, so $d_{i-1} \leq h_{i-2, j-1} < d_{i}-1 \leq u_{i-1}$.

\end{enumerate}

\end{proof}

See Figure \ref{fig:1coordEx} for an example of this description.

\begin{figure}
\begin{tikzpicture}[>=stealth,scale=0.7]
\node at (0,0) {(3,3,1,0)};
\draw (1.5,0) -- (3.5,0); \node at (2.5,-0.4) {$1$};
\node at (5,0) {(3,3,2,0)};
\draw (6.5,0) -- (8.5,0); \node at (7.5,-0.4) {$3$};
\node at (10,0) {(4,3,2,0)};
\draw (11.5,0) -- (13.5,0); \node at (12.5,-0.4) {$4$};
\node at (15,0) {(5,3,2,0)};
\node at (2.5,0.8) {$\mathbf{t}$}; \node at (7.5,0.8) {$\mathbf{\frac{1-t}{1-t^2}}$};
\node at (0,2) {(5,3,\textcolor{red}{2},\textcolor{red}{1})};
\draw (1.5,2) -- (3.5,2); \node at (2.5,1.6) {$1$};
\node at (5,2) {(\textcolor{red}{5},\textcolor{red}{3},2,2)};
\draw (6.5,2) -- (8.5,2); \node at (7.5,1.6) {$5$};
\node at (10,2) {(\textcolor{red}{6},3,2,2)};
\draw (11.5,2) -- (13.5,2); \node at (12.5,1.6) {$6$};
\node at (15,2) {(7,3,2,2)};
\end{tikzpicture}
\caption{An example of one coordinate description. The coordinates/particles which can jump with strictly positive probability are colored in red. Probabilities of jumps are shown in bold.}
\label{fig:1coordEx}
\end{figure}

\subsection{HL-RSK algorithm with input}
\label{sec:rsk-input}

Let us give a useful (and more classical) variation of Definition \ref{def:RSK-main1}. Consider signatures $\la, \nu \in \Sig_{N-1}$, $\mu, \rho \in \Sig_N$, $\la \prec \mu \prec \rho$, $\la \prec \nu \prec \rho$, and $r \in \Z_{\ge 0}$ such that $|\rho| - |\mu| = |\nu| - |\la|+r$. Define $\tilde \nu : = \{ -V \} \cup \nu$, $\tilde \la := \{ -V-r \}$, $\tilde \nu, \tilde \la \in \Sig_N$, for an arbitrary integer $(-V)$ which is strictly smaller than all coordinates of $\mu$ and $\nu$.

Coefficients $U^r (\mu \to \rho \mid \la \to \nu)$ are defined as
\begin{equation}
\label{eq:brrr}
U^r (\mu \to \rho \mid \la \to \nu) := U (\mu \to \rho \mid \tilde \la \to \tilde \nu);
\end{equation}
it is clear that any choice of $V$ produces the same number.

Similarly, let us define a coefficient $\hat U^r ( \nu \to \la \mid \rho \to \mu )$ via:
\begin{equation}
\label{eq:brrr2}
\hat U^r ( \nu \to \la \mid \rho \to \mu ) := U ( - \tilde \nu \to - \tilde \la \mid - \rho \to - \mu)
\end{equation}

For Young diagrams $\mu, \nu, \la, \rho$, and $r \in \Z_{\ge 0}$, with the same interlacing conditions and $|\rho| - |\mu| = |\nu| - |\la|+r$ we again use \eqref{eq:brrr} and \eqref{eq:brrr2} to define $U^r (\mu \to \rho \mid \la \to \nu)$ and $\hat U^r ( \nu \to \la \mid \rho \to \mu )$. For that we need to consider these Young diagrams as signatures $\nu, \la \in \Sig_{N-1}$ and $\rho, \mu \in \Sig_N$ for $N$ large enough. This means that we add certain amount of zeros to the lengths of rows of $\mu, \nu, \la, \rho$; it is a direct check that the obtained quantities do not depend on the choice of $N$.

Equation \eqref{eq:sumUeq1} implies that for any choice of $\la, \mu, \nu$ and $r$ we have:
\begin{equation}
\label{eq:sumUeq1A}
\sum_{\rho \in \Y} U^r (\mu \to \rho \mid \la \to \nu) =1,
\end{equation}
and also for any choice of $\mu, \nu, \rho$ we have
\begin{equation}
\label{eq:sumUeq1B}
\sum_{\la \in \Y, r \in \Z_{\ge 0}} \hat U^r ( \nu \to \la \mid \rho \to \mu ) =1.
\end{equation}
Thus, we have a (randomized) algorithm which takes as an input three Young diagrams $\la, \mu, \nu$, $\la \prec \mu$, $\la \prec \nu$, and a non-negative integer $r$ and outputs a random Young diagram $\rho$ with probability $U^r (\mu \to \rho \mid \la \to \nu)$. This is a (randomized) version of the Fomin growth diagram for the Hall-Littlewood functions. Similarly, we have a (randomized) algorithm with input $\mu, \nu, \rho$, $\mu \prec \rho$, $\nu \prec \rho$, which outputs a pair $( \la, r) \in \Y \times \Z_{\ge 0}$ with probability $\hat U^r ( \nu \to \la \mid \rho \to \mu )$. In our setting this algorithm is the ``correct'' generalization of the inversion of the RSK algorithm.

Properties of these randomized algorithms for Young diagrams easily follow from the established properties of the algorithm for signatures. Let us formulate them.


Equations \eqref{eq:def-psi}, \eqref{eq:def-phi} imply that
\begin{equation*}
\psi_{\mu / \tilde \la} = (1-t) \psi_{\mu / \la}, \qquad \phi_{\tilde \nu / \tilde \la} = (1 - t \mathbf{1}_{r \ge 1}) \phi_{\nu / \la}, \qquad \psi_{\rho / \tilde \nu} = (1-t) \psi_{\rho / \nu}.
\end{equation*}
Therefore, Proposition \ref{lemma:flip} entails that for any Young diagrams $\la,\mu,\nu,\rho$ and $r \in \Z_{\ge 0}$ we have
\begin{equation}
\label{eq:flipY}
\left( 1 - t \mathbf{1}_{r \ge 1} \right) \phi_{\nu / \la} \psi_{\mu / \la} U^r (\mu \to \rho \mid \la \to \nu) = \phi_{\rho / \mu} \psi_{\rho / \nu} \hat U^r ( \nu \to \la \mid \rho \to \mu )
\end{equation}

Proposition \ref{prop:symmetry} implies that
\begin{equation*}
U^r (\mu \to \rho \mid \la \to \nu) = U^r (\nu \to \rho \mid \la \to \mu),
\end{equation*}
while Proposition \ref{prop:Markov-proj} implies that the projection of a randomized RSK algorithm to any fixed number of first columns of Young diagrams is Markovian.

\subsection{Hall-Littlewood operators and a bijective proof of the Cauchy identity}
\label{sec:bij-proof}

Once we know a Hall-Littlewood RSK algorithm satisfying the flip property (Proposition \ref{lemma:flip}), one can prove the Cauchy identity in form \eqref{eq:skew-Cauchy-1varBB} via the following short computation:
\begin{multline}
\label{eq:bijectiveProof}
\sum_{\la \in Sig_{N}: \la \prec \mu, \la \prec \nu, |\mu| - |\la| = k, |\nu|-|\la| = l} \phi_{\nu / \la} \psi_{\mu / \la} = \sum_{\la \in Sig_{N}: \la \prec \mu, \la \prec \nu, |\mu| - |\la| = k, |\nu|-|\la| = l} \phi_{\nu / \la} \psi_{\mu / \la} \sum_{\rho} U(\mu \to \rho \mid \la \to \nu)
\\ = \sum_{\rho \in Sig_{N}: \mu \prec \rho, \nu \prec \rho, |\rho|-|\nu| = k, |\rho|-|\mu|=l} \psi_{\rho / \nu} \phi_{\rho / \mu} \sum_{\la} U( - \nu \to -\la \mid -\rho \to -\mu) = \sum_{\rho \in Sig_{N}: \mu \prec \rho, \nu \prec \rho, |\rho|-|\nu| = k, |\rho|-|\mu|=l} \psi_{\rho / \nu} \phi_{\rho / \la},
\end{multline}
where the first and the third equality follow from \eqref{eq:sumUeq1} and the second equality follows from the flip property. Note that in the second equality both sides contain exactly the same terms: This is the reason why we call this proof bijective.

Similarly, the Cauchy identity in more standard form \eqref{eq:skew-Cauchy-1varAA} can be proved via
\begin{multline}
\label{eq:bijectiveProof2}
\sum_{r \ge 0} ( 1 - t \mathbf{1}_{r \ge 1}) \sum_{\la \in Sig_{N-1}: \la \prec \mu, \la \prec \nu, |\mu|- |\la| = k-r, |\nu|-|\la| = l-r} \phi_{\nu / \la} \psi_{\mu / \la}
\\ = \sum_{r \ge 0} ( 1 - t \mathbf{1}_{r \ge 1}) \sum_{\la \in Sig_{N-1}: \la \prec \mu, \la \prec \nu, |\mu|- |\la| = k-r, |\nu|-|\la| = l-r} \phi_{\nu / \la} \psi_{\mu / \la} \sum_{\rho} U^r (\mu \to \rho \mid \la \to \nu)
\\ = \sum_{\rho \in Sig_{N}: \mu \prec \rho, \nu \prec \rho, |\rho|-|\mu| = k, |\rho|-|\nu|=l} \psi_{\rho / \nu} \phi_{\rho / \mu} \sum_{\la,r} \hat U^r ( \nu \to \la \mid \rho \to \mu) = \sum_{\rho \in Sig_{N}: \mu \prec \rho, \nu \prec \rho, |\rho|-|\nu| = k, |\rho|-|\mu|=l} \psi_{\rho / \nu} \phi_{\rho / \mu},
\end{multline}
where the first equality follows from \eqref{eq:sumUeq1A}, the third equality follows from \eqref{eq:sumUeq1B} and the second equality
is a term by term equality which follows from the flip property in form \eqref{eq:flipY}.

Also, using \eqref{eq:sumUeq1A} and summing over $k$ and $l$, we get
\begin{multline}
\label{eq:quadCauchyRSK}
\sum_{\rho \in \Sig_N: \mu \prec \rho, \nu \prec \rho} \psi_{\rho / \nu} \phi_{\rho / \mu} x^{|\rho|-|\nu|} y^{|\rho|-|\mu|} = \sum_{r \ge 0} ( 1 - t \mathbf{1}_{r \ge 1} ) (xy)^r \sum_{\la \in \Sig_{N-1}: \la \prec \mu, \la \prec \nu} \phi_{\nu / \la} \psi_{\mu / \la} x^{|\mu|-|\la|} y^{|\nu|-|\la|} \\ \times \sum_{\rho \in Sig_{N}: \mu \prec \rho, \nu \prec \rho; |\rho|-|\mu|=|\nu|-|\la|+r} U^r ( \mu \to \rho \mid \la \to \nu);
\end{multline}
we will use this equality in the next section.

Let us reformulate the skew Cauchy identity in one more way. Essentially following Fomin \cite{F}, let us consider the vector space $\Q [t;\Sig_N]$ over $\Q(t)$ with a linear basis consisting of all signatures of length $N$. Let us denote by $\bar \kappa$ the vector which corresponds to signature $\kappa$. Define a family of linear operators on this space by the following rules:
\begin{equation*}
A_k := \bar \la \to \sum_{ \mu \in \Sig_N: \la \prec \mu, |\mu| - |\la| =k} \phi_{ \mu / \la} \bar \mu, \qquad
B_l := \bar \mu \to \sum_{\la \in \Sig_N : \la \prec \mu, |\mu| - |\la| =l} \psi_{\mu / \la} \bar \la, \qquad k,l \in \Z_{\ge 0}.
\end{equation*}
For formal variables $x$ and $y$ define \textit{Hall-Littlewood} raising and lowering operators:
\begin{equation*}
A(x):= \sum_{k=0}^{\infty} A_k x^k, \qquad B(y) := \sum_{l=0}^{\infty} B_l y^l.
\end{equation*}

Then the symmetry properties of the Hall-Littlewood functions and \eqref{eq:skew-Cauchy-1varBB} can be equivalently rewritten as properties of the Hall-Littlewood operators:
\begin{equation}
\label{eq:commut-op}
A(x_1) A(x_2) = A(x_2) A(x_1), \qquad B(y_1) B(y_2) = B(y_2) B(y_1), \qquad A(x) B(y) = A(y) B(x).
\end{equation}
Informally, the Hall-Littlewood RSK algorithm provides a combinatorial refinement of the commutation relation between $A(x)$ and $B(y)$, providing a bijective proof for it. Another well-known combinatorial refinement of the commutation of operators of this sort is the Yang-Baxter / star-triangle relation. We plan to address connections between these two structures in a subsequent publication.

\subsection{Hall-Littlewood RSK field and sampling of Hall-Littlewood processes}
\label{sec:field}

For any $\la \in Sig_{N-1}$, $\mu, \nu \in Sig_N$ such that $\la \prec \mu$, $\la \prec \nu$, and any $r \in \Z_{\ge 0}$ we have constructed a random signature of length $N$ which takes value $\rho$ with probability $U^r (\mu \to \rho \mid \la \to \nu)$ (see \eqref{eq:brrr}). Iteration of this construction for all values of $N$ leads to an interesting probabilistic model described in this section.

Let $\{a_i \}_{i\ge 1}$, $\{ b_j \}_{j \ge 1}$ be two sequences of positive reals such that $a_i b_j <1$ for all $i,j$. Let us call a \textit{state} a map $\Lambda: \Z_{\ge 0} \times \Z_{\ge 0} \to \bigsqcup_{j \ge 0} \Sig_j$, such that $\Lambda (i,j) \in \Sig_j$ and $\Lambda (i,j) \prec \Lambda (i+1,j)$, $\Lambda (i,j) \prec \Lambda (i, j+1)$, for all $(i,j) \in \Z_{\ge 0} \times \Z_{\ge 0}$.

\begin{definition}[Hall-Littlewood RSK field]
\label{def:integr-RSK-field}

The \textit{integrable HL-RSK field} is a random state (see the definition above) constructed in the following way:

1) Set $\Lambda(i,0) = \emptyset$, $\Lambda(0,j) = (0,\dots, 0) \in \Sig_j$ for all $i, j \in \Z_{\ge 0}$.

2) Take a sample of a family of independent random variables $\{ r_{i,j} \}_{i,j \ge 1}$, where the individual distributions are given by
$$
P(r_{i,j}= d) = (1 - t  1_{d \ge 1})(a_i b_j)^d \frac{1 - a_i b_j}{1 - t a_i b_j}, \qquad d=0,1,2, \dots
$$

3) Iterate the following procedure for $n=2,3, \dots$.

Assume that values of $\Lambda (i,j)$ are defined for all $(i,j)$ such that $i+j < n$, and define them for $(i,j)$ such that $i+j=n$. Note that signatures $\Lambda(i-1,j-1)$, $\Lambda( i,j-1)$, and $\Lambda (i-1,j)$ are already defined, so define $\Lambda (i, j)$ as the output of the HL-RSK algorithm (in the form \eqref{eq:brrr}) with inputs  $\Lambda(i-1,j-1)$, $\Lambda( i,j-1)$, $\Lambda (i-1,j)$, and $r_{i,j}$. The steps of the HL-RSK algorithm are independent for distinct pairs $(i,j)$.
\end{definition}

\begin{proposition}
\label{prop:pathHLprocess}
Let $m_1 \ge m_2 \ge \dots \ge m_k$ and $n_1 \le n_2 \dots \le n_k$ be positive integers. Then the collection of signatures
\begin{multline*}
\varnothing = \Lambda (m_1,0) \prec \Lambda (m_1, n_1) \succ \Lambda (m_2, n_1) \prec \Lambda (m_2, n_2) \succ \dots \\ \succ \Lambda (m_k, n_{k-1}) \prec \Lambda (m_k, n_k) \succ \Lambda (0, n_k) = \underbrace{(0, \ldots, 0)}_{n_{k}}
\end{multline*}
is distributed as the Hall-Littlewood process determined by specializations
\begin{align*}
\rho_1^- = \rho (a_{m_2 + 1},\dots,a_{m_1}), \quad \rho_2^- = \rho (a_{m_3 + 1}, \dots, a_{m_{2}}), \quad \ldots , \quad \rho_k^- = \rho (a_1,\dots,a_{m_k}), \\ \rho_1^+ = \rho (b_{1},\dots,b_{n_1}), \quad \rho_2^+ = \rho (b_{n_{1}+1}, \dots,b_{n_{2}}), \quad \ldots, \quad \rho_k^+ = \rho (b_{n_{k-1}+1},\dots,b_{n_k}).
\end{align*}
\end{proposition}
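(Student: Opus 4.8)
The plan is to realize the claimed zigzag as the end state of a sequence of \emph{elementary local moves} on down-right lattice paths, and to show that each move preserves the property of being a Hall-Littlewood process. I would start from a down-right path hugging the boundary of the array, where $\Lambda \equiv \varnothing$ or $\Lambda \equiv (0,\dots,0)$, so that the induced law is the trivial (degenerate) Hall-Littlewood process, and then push the path one lattice box at a time toward the configuration in the statement. A single move replaces a path passing through the lower-left corner $\la = \Lambda(i-1,j-1)$ of a box (a local minimum, flanked on the path by $\mu = \Lambda(i-1,j)$ and $\nu = \Lambda(i,j-1)$) by the path through the opposite corner $\rho = \Lambda(i,j)$, keeping $\mu$ and $\nu$ on the path. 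The whole proposition follows once I check that such a move (i) is compatible with the sampling dynamics of Definition \ref{def:integr-RSK-field}, and (ii) carries one Hall-Littlewood process to another, with the specializations regrouped as recorded by $\rho_a^\pm$.

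For (i) I would use that the field is built in order of increasing $i+j$ and that, by construction, the law of $\Lambda(i,j)$ depends on the previously sampled array only through its three neighbors $\la,\mu,\nu$, together with the independent variable $r_{i,j}$. Consequently, conditionally on all signatures weakly below and to the left, the law of $\rho$ is exactly $\sum_r \mathbb{P}(r_{i,j}=r)\, U^r(\mu \to \rho \mid \la \to \nu)$. This local (directed Markov) dependence is what lets me marginalize the joint law of the array onto either path and relate the two: the law of the pushed path is obtained from that of the old path by applying this conditional kernel and summing out $\la$, provided the moves are performed in an order in which each pushed valley already has its three neighbors on the current path.

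For (ii) the engine is the skew Cauchy identity \eqref{eq:skew-Cauchy-1varA} with $x,y$ the single variables attached to the box (here $x=b_j$ in the vertical direction and $y=a_i$ in the horizontal one, so $xy=a_ib_j$). Its left-hand side $\frac{1-txy}{1-xy}\sum_{\la}P_{\mu/\la}(x)Q_{\nu/\la}(y)$ is precisely the part of the old process measure that involves $\la$, and its right-hand side $\sum_{\rho}P_{\rho/\nu}(x)Q_{\rho/\mu}(y)$ is the corresponding part of the new measure. That the field's conditional kernel $\sum_r \mathbb{P}(r)U^r$ is the \emph{correct} term-by-term redistribution making these two sides agree is exactly the bijectivization \eqref{eq:bijectiveProof2}, which rests on the flip property (Proposition \ref{lemma:flip}); the factor $\frac{1-txy}{1-xy}=\frac{1-ta_ib_j}{1-a_ib_j}$ is matched by the normalization $\frac{1-a_ib_j}{1-ta_ib_j}$ of the geometric law of $r_{i,j}$, i.e. it accounts for the factor $1/\p(\,\cdot\,;0,t)$ attached to this box. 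Accumulating single boxes along each straight segment of the final zigzag and invoking the branching formulas \eqref{eq:HLcombFormP}, \eqref{eq:HLcombFormQ} assembles the single-variable steps into skew functions $P_{\bullet/\bullet}$ in the grouped specializations $\rho_a^+ = \rho(b_{n_{a-1}+1},\dots,b_{n_a})$ (up-moves) and $Q_{\bullet/\bullet}$ in $\rho_a^- = \rho(a_{m_{a+1}+1},\dots,a_{m_a})$ (down-moves), while the local minima of the zigzag furnish the intermediate summations defining the factors $\Psi$ in $\mm_{\mathbf X,\mathbf Y}(\,\cdot\,;0,t)$.

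I expect the main obstacle to be bookkeeping rather than any single hard estimate: first, making the local-update argument of (i) fully rigorous, i.e. choosing the order of elementary moves so that the marginal onto any down-right path can indeed be updated locally and the relevant conditional law is the one-box kernel $\sum_r \mathbb{P}(r)U^r$; and second, verifying that as the path sweeps from the boundary to the final zigzag the single-variable Cauchy steps aggregate into \emph{exactly} the specialization grouping $\rho_a^\pm$ and the $\Psi$-structure, with the per-box factors multiplying to the global normalization $\prod_{i,j}\frac{1-a_ib_j}{1-ta_ib_j}=1/\prod\p(\,\cdot\,;0,t)$. The individual ingredients --- the one-box identity \eqref{eq:skew-Cauchy-1varA}, its bijectivization, and the branching formulas --- are already in hand, so the difficulty lies entirely in organizing the global induction so that these local statements telescope into the stated process.
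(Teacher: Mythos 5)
Your proposal is correct and follows essentially the same route as the paper: the paper also proves the statement by growing a down-right path one box at a time (induction on the number of lattice points below the path), using the one-box kernel $\sum_r \mathbb{P}(r_{i,j}=r)U^r$ together with the flip-property/bijectivized Cauchy identity in its generating-function form \eqref{eq:quadCauchyRSK} to push the measure across a box, and invoking the branching formulas \eqref{eq:HLcombFormQ}, \eqref{eq:HLcombFormP} to reduce to paths and regroup the specializations. The bookkeeping you flag as the main obstacle is exactly what the paper's induction over north-eastern corners organizes.
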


\begin{proof}

Consider an arbitrary down-right path $\mathcal S$ in $\Z_{\ge 0} \times \Z_{\ge 0}$ starting on the $y$-axis and ending on the $x$-axis. It is enough to prove the proposition in the case when the set $\{ (m_i, n_i) \}$ coincides with the set of all integer points of $\mathcal S$: The claim for an arbitrary subset of these points follows from the combinatorial formulae for the Hall-Littlewood polynomials \eqref{eq:HLcombFormQ}, \eqref{eq:HLcombFormP}.

Note that if the set $\{( m_i, n_i) \}$ coincides with the set of all integer points from some down-right path, then we need to show that probability of any configuration of signatures is a certain product of $\phi$'s and $\psi$'s from \eqref{eq:def-psi}, \eqref{eq:def-phi}, since all the skew Hall-Littlewood functions entering the definition depend on one variable only. For example, for the down-right path
$$
(0,2) \to (1,2) \to (1,1) \to (3,1) \to (3,0)
$$
we need to prove
\begin{multline*}
\mathrm{Prob} ( \Lambda(3,1) = \la(3,1), \Lambda(1,2) = \la(1,2), \Lambda(1,1) = \la(1,1), \Lambda(1,2) = \la(1,2) ) \\ = \phi_{ \la(1,2) / (0, 0)} a_1^{|\la(1,2)|} \psi_{ \la(1,2) / \la(1,1) } b_2^{ |\la(1,2)|- |\la(1,1)|} \phi_{ \la(2,1) / \la(1,1)} a_2^{|\la(2,1)| - |\la(1,1)|} \\ \times \phi_{ \la(3,1) / \la(2,1)} a_3^{|\la(3,1)| - |\la(2,1)|} \psi_{ \la(3,1) / \varnothing } b_1^{ |\la(3,1)|},
\end{multline*}
for all choices of signatures $\la(i,j)$.

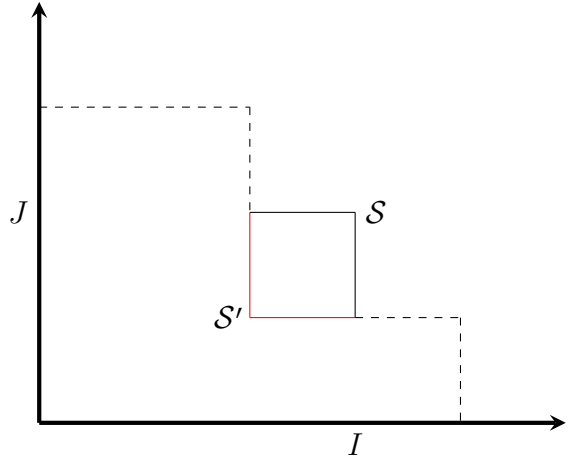
\begin{figure}
\begin{tikzpicture}[>=stealth,scale=0.7]
\draw [ultra thick,->] (0,0) -- (10,0); \draw [ultra thick,->] (0,0) -- (0,8);
\draw [dashed] (0,6) -- (4,6); \draw [dashed] (4,6) -- (4,4);
\draw (4,4) -- (6,4); \draw (6,4) -- (6,2); \draw [dashed] (6,2) -- (8,2); \draw [dashed] (8,2) -- (8,0);
\draw [color=red] (4,4) -- (4,2); \draw [color=red] (4,2) -- (6,2);
\node at (6.4,4) {$\mathcal S$}; \node at (3.6,2) {$\mathcal S'$};
\node at (6,-0.4) {$I$}; \node at (-0.4,4) {$J$};
\end{tikzpicture}
\caption{Paths $\mathcal S$ and $\mathcal S'$ differ by the box with the north-east corner $(I,J)$. Dotted lines represent (arbitrary) common parts of these paths. }
\label{fig:HLprocProof}
\end{figure}

Returning to a general path $\mathcal S$, we prove the statement by induction on the number of points from $\Z_{\ge 0} \times \Z_{\ge 0}$ which lie below $\mathcal S$. The base case is immediate. For the induction step, consider an arbitrary north-eastern corner of $\mathcal S$; denote its position by $(I,J)$. Consider the down-right path $\mathcal S'$ which is obtained from $\mathcal S$ by excluding the box with the north-eastern corner $(I,J)$ (see Figure \ref{fig:HLprocProof} ). Applying the induction hypothesis to the path $\mathcal S'$, we get that the probability of a specific configuration $\{ \la(i,j) \}$ along $\mathcal S'$ is
\begin{multline*}
\mathrm{Prob} ( \Lambda (i,j) = \la (i,j); (i,j) \in \mathcal S') \\ = A \phi_{ \la ( I,J-1 ) / \la (I-1,J-1)} a_I^{|\la ( I,J-1 )| - |\la (I-1,J-1)| } \psi_{ \la ( I-1,J ) / \la (I-1,J-1)} b_J^{|\la ( I-1,J )| - |\la (I-1,J-1)|} B,
\end{multline*}
where $A,B$ come from the definition of the Hall-Littlewood process and do not depend on $\la_{I-1,J-1}$. Next, Definition \ref{def:integr-RSK-field} implies
\begin{multline*}
\mathrm{Prob} ( \Lambda (i,j) = \la (i,j); (i,j) \in \mathcal S' \ \text{or} \ (i,j) =(I,J) ) \\ = A \phi_{ \la ( I,J-1 ) / \la (I-1,J-1)} a_I^{|\la ( I,J-1 )| - |\la (I-1,J-1)| } \psi_{ \la ( I-1,J ) / \la (I-1,J-1)} b_J^{|\la ( I-1,J )| - |\la (I-1,J-1)|} B \\ \times (1 - t \mathbf{1}_{R \ge 1}) (a_I b_J)^R \frac{1-a_I b_J}{1-t a_I b_J} U^R (\la(I-1,J) \to \la (I,J) \mid \la(I-1,J-1) \to \la (I,J-1)),
\end{multline*}
where $R: = |\la (I,J)| - |\la (
I-1,J)| - |\la (I,J-1)| + |\la (I-1,J-1)|$.

Summing over all $\la(I-1,J-1)$ and (crucially) using \eqref{eq:quadCauchyRSK}, we obtain
\begin{multline*}
\mathrm{Prob} ( \Lambda (i,j) = \la (i,j); (i,j) \in \mathcal S) \\ = A \phi_{ \la ( I,J ) / \la (I-1,J)} a_I^{|\la ( I,J )| - |\la (I-1,J)| } \psi_{ \la ( I,J ) / \la (I,J-1)} b_J^{|\la ( I,J )| - |\la (I,J-1)|} B,
\end{multline*}
which concludes the proof.

\end{proof}

As we have shown in Section \ref{sec:2}, there are various formulas for observables of the Hall-Littlewood processes which should provide a lot of information about various limits of this probabilistic model. Thus, we refer to the HL-RSK field from Definition \ref{def:integr-RSK-field} as \textit{integrable}.

\section{Degenerations}
\label{sec:degen}

Let $\{ \Lambda (i,j) \}_{i,j \in \Z_{\ge 0}}$ be the HL-RSK field (see Definition \ref{def:integr-RSK-field}).
In this section we consider its various degenerations.

\subsection{Schur RSK field}

For $t=0$ we obtain the well known object coming from the column insertion version of the classical RSK algorithm. The asymptotic analysis of this model was done in \cite{Joh} and many subsequent papers. Various degenerations lead to the last passage percolation problem and to the totally asymmetric simple exclusion process (TASEP). The HL-RSK field can be thought of as a natural one parameter deformation of these objects. In particular, it degenerates into the asymmetric simple exclusion process (ASEP), see Section \ref{sec:asep}.

\subsection{Half-continuous RSK field}

A ``continuous time'' version of RSK-algorithm was previously constructed in \cite{BP}. The corresponding field appears from the HL-RSK field as a result of the following limit transition. Consider $a_i = \eps \hat a_i$, $i=1,2,3,\dots$, and grid $\eps \Z_{\ge 0} \times \Z_{\ge 0}$ instead of $\Z_{\ge 0} \times \Z_{\ge 0}$. In $\eps \to 0$ limit the $x$-axis becomes continuous, while the $y$-direction remains discrete.
See \cite[Section 6]{BBW} for a more detailed discussion about this object and its relation to vertex models.

\subsection{Stochastic six vertex model}
\label{sec:st6v}

Let $d_0 (i,j)$ be the number of coordinates of $\Lambda(i,j)$ which are equal to 0. Equivalently, if we think about $\Lambda(i,j)$ as a Young diagram, then $d_0 (i,j)$ equals $j$ minus the length of the first column of $\Lambda(i,j)$. Note that the interlacing constraints entail $d_0 (i-1,j) \ge d_0 (i,j) \ge d_0 (i-1,j) -1$, $d_0 (i,j-1) \le d_0 (i,j) \le d_0 (i,j-1)+1$ (the asymmetry is the result of the fact that the lengths of signatures increase in $y$-direction, but not in $x$-direction).

Proposition \ref{prop:Markov-proj} implies that $ d_0 (i,j)$ depends on $d_0 (i-1,j)$, $d_0 (i,j-1)$, $d_0 (i-1,j-1)$ and $e_{i,j} := 1_{r_{i,j} \ge 1}$ in a Markovian way. More explicitly, using the description of the algorithm from Section \ref{sec:1cDescr} and $\mathrm{Prob} (e_{i,j} =0) = \frac{1-a_i b_j}{1 - t a_i b_j}$, $\mathrm{Prob} (e_{i,j} =1) = \frac{(1- t) a_i b_j}{1 - t a_i b_j}$, it is a direct check that we have the following rules for any $m \in \Z_{\ge 1}$:
\begin{itemize}

\item
If $d_0 (i-1,j-1) = m , d_0 (i,j-1) = m-1, d_0 (i-1,j) = m+1$, then $\mathrm{Prob} ( d_0 (i,j) = m) =1$.

\item
If $d_0 (i-1,j-1) = m , d_0 (i,j-1) = m, d_0 (i-1,j) = m$, then $\mathrm{Prob} ( d_0 (i,j) = m) =1$.

\item
If $d_0 (i-1,j-1) = m , d_0 (i,j-1) = m, d_0 (i-1,j) = m+1$, then $\mathrm{Prob} ( d_0 (i,j) = m+1) = \frac{1-a_i b_j}{1 - t a_i b_j}$, $\mathrm{Prob} ( d_0 (i,j) = m) = \frac{(1-t) a_i b_j}{1 - t a_i b_j}$.

\item
If $d_0 (i-1,j-1) = m, d_0 (i,j-1) = m-1, d_0 (i-1,j) = m$, then $\mathrm{Prob} ( d_0 (i,j) = m) = \frac{1-t}{1 - t a_i b_j}$, $\mathrm{Prob} ( d_0 (i,j) = m-1) = \frac{t (1- a_i b_j)}{1 - t a_i b_j}$.

\end{itemize}

This probabilistic model can be interpreted as an instance of the \textit{stochastic six vertex model} introduced in \cite{GS} and recently studied in \cite{BCG}. Namely, let us draw an edge between vertices $(i-1/2,j-1/2)$ and $(i-1/2,j+1/2)$ if $d_0 (i-1,j) \ne d_0 (i,j)$, and also draw an edge between vertices $(i-1/2,j-1/2)$ and $(i+1/2,j-1/2)$ if $d_0 (i,j-1) \ne d_0 (i,j)$. Then we obtain a random configuration on edges. The rules listed above imply that if we know whether the edges $(i-3/2, j-1/2) \to (i-1/2,j-1/2)$ and $(i-1/2, j-3/2) \to (i-1/2,j-1/2)$ are present in the configuration, the probabilities of the presence of edges $(i-1/2, j-1/2) \to (i-1/2,j+1/2)$ and $(i-1/2, j-1/2) \to (i+1/2,j-1/2)$ are determined and do not depend on the rest of the configuration. Thus, the probability of the whole configuration can be computed as the product of weights of vertices (=probabilities of local steps); the weights are depicted in Figure \ref{fig:1arrows}.

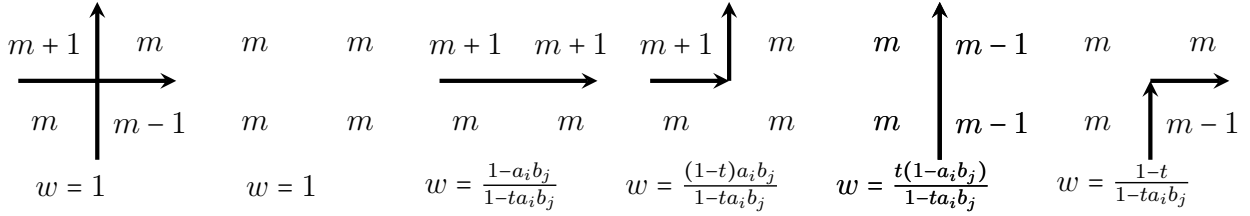
\begin{figure}
\begin{tikzpicture}[>=stealth,scale=0.7]
\draw[ultra thick,->] (0,3) -- (3,3);
\draw[ultra thick,->] (1.5,1.5) -- (1.5,4.5);
\node at (0.5,2.2) {$m$};
\node at (2.5,2.2) {$m-1$};
\node at (0.5,3.7) {$m+1$};
\node at (2.5,3.7) {$m$};
\node at (1,1) {$w = 1$};
\node at (4.5,2.2) {$m$};
\node at (6.5,2.2) {$m$};
\node at (4.5,3.7) {$m$};
\node at (6.5,3.7) {$m$};
\node at (5,1) {$w = 1$};
\draw[ultra thick,->] (8,3) -- (11,3);
\node at (8.5,2.2) {$m$};
\node at (10.5,2.2) {$m$};
\node at (8.5,3.7) {$m+1$};
\node at (10.5,3.7) {$m+1$};
\node at (9,1) {$w = \frac{1-a_{i}b_{j}}{1-ta_{i}b_{j}}$};
\draw[ultra thick,->] (12,3) -- (13.5,3); \draw[ultra thick,->] (13.5,3) -- (13.5,4.5);
\node at (12.5,2.2) {$m$};
\node at (14.5,2.2) {$m$};
\node at (12.5,3.7) {$m+1$};
\node at (14.5,3.7) {$m$};
\node at (13,1) {$w = \frac{(1-t)a_{i}b_{j}}{1-ta_{i}b_{j}}$};
\draw[ultra thick,->] (17.5,1.5) -- (17.5,4.5);
\node at (16.5,2.2) {$m$};
\node at (18.5,2.2) {$m-1$};
\node at (16.5,3.7) {$m$};
\node at (18.5,3.7) {$m-1$};
\node at (17,1) {$w = \frac{t(1-a_{i}b_{j})}{1-ta_{i}b_{j}}$};
\draw[ultra thick,->] (17.5,1.5) -- (17.5,4.5);
\node at (16.5,2.2) {$m$};
\node at (18.5,2.2) {$m-1$};
\node at (16.5,3.7) {$m$};
\node at (18.5,3.7) {$m-1$};
\node at (17,1) {$w = \frac{t(1-a_{i}b_{j})}{1-ta_{i}b_{j}}$};
\draw[ultra thick,->] (21.5,1.5) -- (21.5,3); \draw[ultra thick,->] (21.5,3) -- (23,3);
\node at (20.5,2.2) {$m$};
\node at (22.5,2.2) {$m-1$};
\node at (20.5,3.7) {$m$};
\node at (22.5,3.7) {$m$};
\node at (21,1) {$w = \frac{1-t}{1-ta_{i}b_{j}}$};
\end{tikzpicture}
\caption{Weights of vertices in the stochastic six-vertex model}
\label{fig:1arrows}
\end{figure}

This is exactly the definition of the stochastic six vertex model in a quadrant, see e.g. \cite{BCG}, \cite[Section 3 and Section 5.1]{BBW} for a more detailed discussion. It is immediate that $d_0 (i,j)$ is the \textit{height function} for such a configuration on edges.

The application of Proposition \ref{prop:pathHLprocess} implies that the height function of the stochastic six vertex model is distributed according to the Hall-Littlewood process, which gives an alternative proof of \cite[Theorem 4.3]{BBW}.



Theorem \ref{prop:gen-HL-proc} combined with Proposition \ref{prop:pathHLprocess} provides observables for any points on the down-right path in $\Z_{\ge 0}$. Let us formulate an explicit statement for two points.

\begin{proposition}
\label{prop:stoh6vertMom}
Let $h (i,j)$ be the height function for the stochastic six vertex model with weights given by Figure \ref{fig:1arrows}.
For any $m_1 \ge m_2$ and $n_1 \le n_2$, and arbitrary integers $k_1, k_2 \ge 0$ we have
\begin{multline*}
\mathbf E t^{k_1 h (m_1,n_1) + k_2 h (m_2,n_2)} = [ z_1^{-1} \dots z_{k_1}^{-1} z_{k_1+1}^{-1} \dots z_{k_1 + k_2}^{-1}] t^{n_1 k_1 + n_2 k_2} \prod_{1 \le i < j \le k_1+k_2} \frac{ 1 - z_i^{-1} z_j}{1 - t^{-1} z_i^{-1} z_j} \\ \times \frac{1}{z_1 \dots z_{k_1} z_{k_1+1} \dots z_{k_1 + k_2}} \prod_{r=1}^{k_1+k_2} \left( \prod_{j=1}^{n_1} \frac{1+z_r b_j}{1+t z_r b_j} \prod_{i=1}^{m_2} \frac{1+t^{-1} z_r^{-1} a_i}{1 + z_r^{-1} a_i} \right) \prod_{r=1}^{k_1} \prod_{i=m_2+1}^{m_1} \frac{1+t^{-1} z_r^{-1} a_i}{1 + z_r^{-1} a_i} \\ \times \prod_{r=k_1+1}^{k_2} \prod_{j=n_1+1}^{n_2} \frac{1+z_r b_j}{1+t z_r b_j}.
\end{multline*}
\end{proposition}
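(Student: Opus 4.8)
The plan is to reduce the two-point moment to a first-column observable of a genuine Hall-Littlewood process and then invoke Theorem \ref{prop:gen-HL-proc}. First I would rewrite the height function in terms of the first column: since $d_0(i,j)$ counts the coordinates of $\Lambda(i,j)$ equal to $0$, we have $h(i,j)=j-\Lambda(i,j)'_1$, where $\Lambda(i,j)'_1$ is the length of the first column. Consequently
$$\mathbf E\, t^{k_1 h(m_1,n_1)+k_2 h(m_2,n_2)}=t^{k_1 n_1+k_2 n_2}\,\mathbf E\, t^{-k_1 \Lambda(m_1,n_1)'_1-k_2\Lambda(m_2,n_2)'_1},$$
which already isolates the prefactor $t^{k_1 n_1+k_2 n_2}$ appearing in the statement. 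It remains to compute the expectation of $t^{-k_1\Lambda(m_1,n_1)'_1-k_2\Lambda(m_2,n_2)'_1}$.

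The second step is to present this expectation as a first-column observable of a Hall-Littlewood process. I would apply Proposition \ref{prop:pathHLprocess} to a down-right path that visits the point $(m_1,n_1)$ with multiplicity $k_1$ and the point $(m_2,n_2)$ with multiplicity $k_2$; this is permitted because the proposition allows equalities among the $m_i$ and the $n_i$. Along such a path the $k_1$ consecutive copies of $\Lambda(m_1,n_1)$ are joined by empty specializations (the corresponding $\rho(\dots)$ have no variables) and therefore coincide almost surely, and likewise for the $k_2$ copies of $\Lambda(m_2,n_2)$. Hence $t^{-k_1\Lambda(m_1,n_1)'_1-k_2\Lambda(m_2,n_2)'_1}$ becomes a product of $k_1+k_2$ single-column observables of the form $t^{-(\lambda^m)'_1}$ taken over the marked partitions of an honest Hall-Littlewood process, whose specializations are the $\rho^{\pm}$ listed in Proposition \ref{prop:pathHLprocess}: the first $k_1$ marks see $a_1,\dots,a_{m_1}$ and $b_1,\dots,b_{n_1}$, while the last $k_2$ marks see $a_1,\dots,a_{m_2}$ and $b_1,\dots,b_{n_2}$.

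The third step is to evaluate this product via Theorem \ref{prop:gen-HL-proc}, read in its first-column form (as in the transition between \eqref{eq:form-var-mes-44} and \eqref{eq:HL-meas-obs}) and with the Hall-Littlewood parameter relabelled $q\to t$. I would take $M=k_1+k_2$ marked partitions with all $r_m=1$ and rename the single variable of the $m$-th group $v_{1;m}\to z_m$. With every $r_m=1$ the intra-group Vandermonde squares and the factorials disappear, each diagonal block contributing only $1/z_m$, so that the diagonal product equals $1/(z_1\cdots z_{k_1+k_2})$; the factors $\frac{1-z_i^{-1}z_j}{1-t^{-1}z_i^{-1}z_j}$ come from the inter-group products $\prod_{i}\prod_{j}\frac{1-v_{i;\alpha}^{-1}v_{j;\beta}}{1-q^{-1}v_{i;\alpha}^{-1}v_{j;\beta}}$ of the theorem, and the specialization factors $\frac{1+z_r b_j}{1+t z_r b_j}$ and $\frac{1+t^{-1}z_r^{-1}a_i}{1+z_r^{-1}a_i}$ come from substituting the $\rho^{\pm}$ into the $x$- and $y$-products. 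Assembling these with the prefactor from the first step produces the asserted formula.

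The main obstacle I expect is the specialization bookkeeping: one must check that, after the relabelling $q\to t$ and the identification of the $\rho^{\pm}$ with the $a_i$ and $b_j$, each $z_r$ picks up exactly the factors displayed --- all $k_1+k_2$ variables acquiring $\prod_{j\le n_1}$ and $\prod_{i\le m_2}$, the first $k_1$ additionally acquiring $\prod_{m_2<i\le m_1}$, and the last $k_2$ additionally $\prod_{n_1<j\le n_2}$ --- and that the inter-group factors of Theorem \ref{prop:gen-HL-proc} collapse to a single product $\prod_{1\le i<j\le k_1+k_2}$ over all pairs. One also has to justify the multiplicity/empty-specialization device and to translate the theorem's formal coefficient extraction into the stated $[z_1^{-1}\cdots z_{k_1+k_2}^{-1}]$; both are routine once the ordering of the marks along the path is fixed.
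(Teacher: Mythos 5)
Your proposal is correct and takes essentially the same route as the paper: the paper's own two-line proof likewise rewrites $h(i,j)=j-\Lambda'_1(i,j)$ and applies Theorem \ref{prop:gen-HL-proc} to the Hall-Littlewood process supplied by Proposition \ref{prop:pathHLprocess} along the down-right path through $(m_1,n_1)$ and $(m_2,n_2)$, with each point repeated ($k_1$ and $k_2$ times) and all $r_m=1$. Your write-up simply makes explicit the bookkeeping that the paper leaves implicit --- the repetition/empty-specialization device, the first-column reading of the theorem with $q\to t$, and the collapse of the intra-group factors to $1/(z_1\cdots z_{k_1+k_2})$.
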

\begin{proof}
The height function is given by $h(i,j) = d_0 (i,j) = j - \Lambda'_1 (i,j)$. Applying Theorem \ref{prop:gen-HL-proc}, we arrive at the claimed formula.
\end{proof}

\subsection{Two-layer stochastic vertex model}
\label{sec:2vertMod}

In this section we introduce a natural generalization of a stochastic six vertex model (see Section \ref{sec:st6v}).

Let $d_0 (i,j)$ and $d_1 (i,j)$ be the number of coordinates of $\Lambda(i,j)$ which are equal to 0 and 1, respectively. Equivalently, if we think about $\Lambda(i,j)$ as a Young diagram, then we have $d_0 (i,j) = j - \Lambda'_1 (i,j)$, and $d_1 (i,j) = \Lambda'_1 (i,j) - \Lambda'_2 (i,j)$. We will also use notation $h_0 (i,j) := d_0(i,j)$, $h_1 (i,j) := d_0 (i,j) + d_1 (i,j)$.
Note that interlacing constraints entail $d_0 (i-1,j) \ge d_0 (i,j) \ge d_0 (i-1,j) -1$, $d_0 (i,j-1) \le d_0 (i,j) \le d_0 (i,j+1)$, $h_1 (i-1,j) \ge h_1 (i,j) \ge h_1 (i-1,j) -1$, $h_1 (i,j-1) \le h_1 (i,j) \le h_1 (i,j+1)$
(the asymmetry coming because the lengths of signatures increase in $y$-direction, but not in $x$-direction).

\begin{proposition}
$(d_0 (i,j), d_1 (i,j))$ is determined by $(d_0 (i-1,j), d_1 (i-1,j))$, $(d_0 (i,j-1), d_1 (i,j-1))$, $(d_0 (i-1,j-1), d_1 (i-1,j-1))$ in a Markovian way. The explicit transition probabilities for this Markov chain are written in Figure \ref{fig:colored-2arrows}. In Figure \ref{fig:colored-2arrows} we draw a black arrow between boxes if they contain distinct values of $d_0$, and a red arrow if they contain distinct values of $d_0+ d_1$.
\end{proposition}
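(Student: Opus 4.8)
The plan is to mimic the treatment of the single height function $d_0$ in Section \ref{sec:st6v}, now carrying along two columns at once. The key starting observation is that the pair $(d_0(i,j),d_1(i,j))$ records exactly the same information as the pair of first two column lengths $(\Lambda(i,j)'_1,\Lambda(i,j)'_2)$, because $d_0(i,j)=j-\Lambda(i,j)'_1$ and $d_1(i,j)=\Lambda(i,j)'_1-\Lambda(i,j)'_2$. Thus the proposition is equivalent to showing that $(\Lambda(i,j)'_1,\Lambda(i,j)'_2)$ evolves Markovianly relative to the corresponding data at the three neighbors, together with a determination of the explicit transition probabilities.

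First I would establish the Markov property. Recall that in Definition \ref{def:integr-RSK-field} the signature $\Lambda(i,j)$ is produced from $\Lambda(i-1,j-1)$, $\Lambda(i,j-1)$, $\Lambda(i-1,j)$ and the independent variable $r_{i,j}$ through the coefficient $U^{r_{i,j}}$, which by \eqref{eq:brrr} equals $U(\mu\to\rho\mid\tilde\la\to\tilde\nu)$ with $\mu=\Lambda(i-1,j)$, $\nu=\Lambda(i,j-1)$, $\la=\Lambda(i-1,j-1)$ and $\tilde\la=\{-V-r_{i,j}\}\cup\la$, $\tilde\nu=\{-V\}\cup\nu$. I would then apply the Markov projection Proposition \ref{prop:Markov-proj} with $k=2$ to this signature-level dynamics, concluding that the law of $(\Lambda(i,j)'_1,\Lambda(i,j)'_2)$ is determined by the truncated data $\{\tilde\la'_a\}_{a\le2}$, $\{\mu'_a\}_{a\le2}$, $\{\tilde\nu'_a\}_{a\le2}$. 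The essential point is the placement of the extra particle of $\tilde\la$ at the negative position $-V-r_{i,j}$: it leaves every column $a\ge1$ untouched, so $\tilde\la'_1=\la'_1$ and $\tilde\la'_2=\la'_2$, whereas the location of the jump of $a\mapsto\tilde\la'_a$ at $a=-V-r_{i,j}$ encodes precisely the value of $r_{i,j}$ (all other truncated values being the constants $j$ or $N$ forced by nonnegativity of the coordinates). Hence $(\Lambda(i,j)'_1,\Lambda(i,j)'_2)$, equivalently $(d_0(i,j),d_1(i,j))$, is a random function of the three neighboring pairs $(d_0,d_1)$ and of $r_{i,j}$ alone, which yields the asserted Markov property after $r_{i,j}$ is integrated out.

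Next I would compute the transition probabilities explicitly, using the one-coordinate description of Section \ref{sec:1cDescr}. There the lower-level path $\tilde\la\to\tilde\nu$ is realized as a sequence of single-particle $+1$ moves, the first $r_{i,j}$ of which are phantom moves of the negative particle; each move triggers an upper-level move that may push a particle from column $0$ into column $1$, or from column $1$ into column $2$. For a fixed local configuration of the neighbors one reads off, as a function of $r_{i,j}$, whether such pushes occur in columns $1$ and $2$; exactly as in the $d_0$-case this depends on $r_{i,j}$ only through the indicators $\mathbf 1_{r_{i,j}\ge1}$ and $\mathbf 1_{r_{i,j}\ge2}$, since a third box would enter column $3$ and remain invisible to $(d_0,d_1)$. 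Averaging over $r_{i,j}$ with the distribution from step $2$ of Definition \ref{def:integr-RSK-field}, so that the relevant quantities are $\mathrm{Prob}(r_{i,j}=0)=\tfrac{1-a_ib_j}{1-ta_ib_j}$, $\mathrm{Prob}(r_{i,j}=1)=(1-t)a_ib_j\tfrac{1-a_ib_j}{1-ta_ib_j}$ and $\mathrm{Prob}(r_{i,j}\ge2)=\tfrac{(1-t)(a_ib_j)^2}{1-ta_ib_j}$, would produce the vertex weights of Figure \ref{fig:colored-2arrows}.

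The main work, and the main obstacle, is the case analysis behind this ``direct check.'' In contrast to the single-column situation, the two layers are coupled: a move entering column $1$ can in turn force a move in column $2$, so the admissible output pairs $(d_0(i,j),d_1(i,j))$ and their weights must be worked out for every local configuration of the three neighbors compatible with the interlacing inequalities $d_0(i-1,j)\ge d_0(i,j)\ge d_0(i-1,j)-1$, $h_1(i-1,j)\ge h_1(i,j)\ge h_1(i-1,j)-1$ and their $y$-direction analogues. The delicate steps are to pin down, configuration by configuration, the exact thresholds in $r_{i,j}$ governing the two cascading pushes, and to verify that no dependence on the finer structure of the neighbor signatures survives beyond $(d_0,d_1)$; the latter is guaranteed in principle by the Markov projection of the second paragraph but must be confirmed against the explicit weights. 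Once all configurations are matched with Figure \ref{fig:colored-2arrows}, the proposition follows.
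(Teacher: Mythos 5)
Your proposal is correct and takes essentially the same route as the paper's own proof: the paper likewise reduces the input to the three variants $r_{i,j}=0$, $r_{i,j}=1$, $r_{i,j}\ge 2$ (with exactly the probabilities you state) and then declares the rest "a direct (though a little bit tedious) check" against Figure \ref{fig:colored-2arrows} using the one-coordinate description of Section \ref{sec:1cDescr}. Your additional justifications --- applying Proposition \ref{prop:Markov-proj} with $k=2$ to the padded signatures $\tilde\la,\tilde\nu,\mu$, and the cascading argument that a third phantom box necessarily lands in column $3$ and is invisible to $(d_0,d_1)$ --- merely make explicit what the paper leaves implicit, and, like the paper, you defer the same routine case-by-case verification.
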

\begin{proof}
Note that we can consider only three variants for the value of input: $P(r_{ij}=0) = \frac{1-ab}{1-tab}$, $P(r_{ij}=1) = \frac{(1-t)ab(1-ab)}{1-tab}$, $P(r_{ij} \ge 2) = \frac{(1-t)a^2 b^2}{1-tab}$. The rest is a direct (though a little bit tedious) check with the use of the description given in Section \ref{sec:1cDescr}.
\end{proof}

\begin{figure}
\begin{tikzpicture}[>=stealth,scale=0.7]
\draw[ultra thick,->] (0,3) -- (3,3);
\draw[ultra thick,->, dashed, draw=red,fill=red] (0,3.2) -- (3,3.2);
\draw[ultra thick,->] (1.5,1.5) -- (1.5,4.5);
\draw[ultra thick,->, dashed, draw=red,fill=red] (1.7,1.5) -- (1.7,4.5);
\node at (0.5,2.5) {$m$}; \node at (0.5,2) {$k$};
\node at (2.5,2.5) {$m-1$}; \node at (2.5,2) {$k$};
\node at (0.5,4.2) {$m+1$}; \node at (0.5,3.7) {$k$};
\node at (2.5,4.2) {$m$}; \node at (2.5,3.7) {$k$};
\node at (1,1) {$w = 1$};
\draw[ultra thick,->] (4,3) -- (7,3);
\draw[ultra thick,->] (5.5,1.5) -- (5.5,4.5);
\node at (4.5,2.5) {$m$}; \node at (4.5,2) {$k$};
\node at (6.5,2.5) {$m-1$}; \node at (6.5,2) {$k+1$};
\node at (4.5,4.2) {$m+1$}; \node at (4.5,3.7) {$k-1$};
\node at (6.5,4.2) {$m$}; \node at (6.5,3.7) {$k$};
\node at (5,1) {$w = 1$};
\draw[ultra thick,->, dashed, draw=red,fill=red] (8,3.2) -- (11,3.2);
\draw[ultra thick,->, dashed, draw=red,fill=red] (9.7,1.5) -- (9.7,4.5);
\node at (8.5,2.5) {$m$}; \node at (8.5,2) {$k$};
\node at (10.5,2.5) {$m-1$}; \node at (10.5,2) {$k$};
\node at (8.5,4.2) {$m+1$}; \node at (8.5,3.7) {$k$};
\node at (10.5,4.2) {$m$}; \node at (10.5,3.7) {$k$};
\node at (9,1) {$w = 1$};
\node at (12.5,2.5) {$m$}; \node at (12.5,2) {$k$};
\node at (14.5,2.5) {$m$}; \node at (14.5,2) {$k$};
\node at (12.5,4.2) {$m$}; \node at (12.5,3.7) {$k$};
\node at (14.5,4.2) {$m$}; \node at (14.5,3.7) {$k$};
\node at (13,1) {$w = 1$};
\draw[ultra thick,->] (16,3) -- (17.5,3); \draw[ultra thick,->] (17.5,3) -- (17.5,4.5);
\node at (16.5,2.5) {$m$}; \node at (16.5,2) {$k$};
\node at (18.5,2.5) {$m-1$}; \node at (18.5,2) {$k$};
\node at (16.5,4.2) {$m$}; \node at (16.5,3.7) {$k$};
\node at (18.5,4.2) {$m$}; \node at (18.5,3.7) {$k$};
\node at (17,1) {$w = \frac{(1-t)ab}{1- t ab}$};
\draw[ultra thick,->] (20,3) -- (21.5,3); \draw[ultra thick,->] (21.5,3) -- (21.5,4.5);
\draw[ultra thick,->, dashed, draw=red,fill=red] (20,3.2) -- (23,3.2);
\draw[ultra thick,->, dashed, draw=red,fill=red] (21.7,1.5) -- (21.7,4.5);
\node at (20.5,2.5) {$m$}; \node at (20.5,2) {$k$};
\node at (22.5,2.5) {$m-1$}; \node at (22.5,2) {$k+1$};
\node at (20.5,4.2) {$m+1$}; \node at (20.5,3.7) {$k$};
\node at (22.5,4.2) {$m$}; \node at (22.5,3.7) {$k$};
\node at (21,1) {$w = \frac{(1-t)ab}{1- t ab}$};
\draw[ultra thick,->] (0,8) -- (3,8);
\node at (0.5,7.5) {$m$}; \node at (0.5,7) {$k$};
\node at (2.5,7.5) {$m$}; \node at (2.5,7) {$k$};
\node at (0.5,9.2) {$m+1$}; \node at (0.5,8.7) {$k-1$};
\node at (2.5,9.2) {$m+1$}; \node at (2.5,8.7) {$k-1$};
\node at (1,6) {$w = \frac{1-ab}{1-t ab}$};
\draw[ultra thick,->] (4,8) -- (7,8);
\draw[ultra thick,->, dashed, draw=red,fill=red] (4,8.2) -- (7,8.2);
\draw[ultra thick,->, dashed, draw=red,fill=red] (5.7,6.5) -- (5.7,9.5);
\node at (4.5,7.5) {$m$}; \node at (4.5,7) {$k$};
\node at (6.5,7.5) {$m$}; \node at (6.5,7) {$k+1$};
\node at (4.5,9.2) {$m+1$}; \node at (4.5,8.7) {$k$};
\node at (6.5,9.2) {$m+1$}; \node at (6.5,8.7) {$k+1$};
\node at (5,6) {$w = \frac{1-ab}{1-t ab}$};
\draw[ultra thick,->, dashed, draw=red,fill=red] (8,8.2) -- (11,8.2);
\node at (8.5,7.5) {$m$}; \node at (8.5,7) {$k$};
\node at (10.5,7.5) {$m$}; \node at (10.5,7) {$k$};
\node at (8.5,9.2) {$m$}; \node at (8.5,8.7) {$k+1$};
\node at (10.5,9.2) {$m$}; \node at (10.5,8.7) {$k+1$};
\node at (9,6) {$w = \frac{1-ab}{1-t ab}$};
\draw[ultra thick,->, dashed, draw=red,fill=red] (12,8.2) -- (13.5,8.2);
\draw[ultra thick,->, dashed, draw=red,fill=red] (13.5,8.2) -- (13.5,9.7);
\node at (12.5,7.5) {$m$}; \node at (12.5,7) {$k$};
\node at (14.5,7.5) {$m$}; \node at (14.5,7) {$k$};
\node at (12.5,9.2) {$m$}; \node at (12.5,8.7) {$k+1$};
\node at (14.5,9.2) {$m$}; \node at (14.5,8.7) {$k$};
\node at (13,6) {$w = \frac{(1-t)ab}{1- t ab}$};
\draw[ultra thick,->] (16,8) -- (19,8); \draw[ultra thick,->] (17.5,6.5) -- (17.5,9.5);
\draw[ultra thick,->, dashed, draw=red,fill=red] (16,8.2) -- (19,8.2);
\node at (16.5,7.5) {$m$}; \node at (16.5,7) {$k$};
\node at (18.5,7.5) {$m$}; \node at (18.5,7) {$k$};
\node at (16.5,9.2) {$m$}; \node at (16.5,8.7) {$k+1$};
\node at (18.5,9.2) {$m$}; \node at (18.5,8.7) {$k+1$};
\node at (17,6) {$w = \frac{1-ab}{1-t ab}$};
\draw[ultra thick,->] (20,8) -- (23,8); \draw[ultra thick,->] (21.5,6.5) -- (21.5,9.5);
\draw[ultra thick,->, dashed, draw=red,fill=red] (20,8.2) -- (21.7,8.2);
\draw[ultra thick,->, dashed, draw=red,fill=red] (21.7,8.2) -- (21.7,9.7);
\node at (20.5,7.5) {$m$}; \node at (20.5,7) {$k$};
\node at (22.5,7.5) {$m$}; \node at (22.5,7) {$k$};
\node at (20.5,9.2) {$m$}; \node at (20.5,8.7) {$k+1$};
\node at (22.5,9.2) {$m$}; \node at (22.5,8.7) {$k$};
\node at (21,6) {$w = \frac{(1-t)ab}{1- t ab}$};
\draw[ultra thick,->] (1.5,11.5) -- (1.5,14.5);
\node at (0.5,12.5) {$m$}; \node at (0.5,12) {$k$};
\node at (2.5,12.5) {$m-1$}; \node at (2.5,12) {$k$};
\node at (0.5,14.2) {$m$}; \node at (0.5,13.7) {$k$};
\node at (2.5,14.2) {$m-1$}; \node at (2.5,13.7) {$k$};
\node at (1,11) {$w = \frac{t(1-ab)}{1-t ab}$};
\draw[ultra thick,->] (5.5,11.5) -- (5.5,13.0); \draw[ultra thick,->] (5.5,13) -- (7,13.0);
\node at (4.5,12.5) {$m$}; \node at (4.5,12) {$k$};
\node at (6.5,12.5) {$m-1$}; \node at (6.5,12) {$k$};
\node at (4.5,14.2) {$m$}; \node at (4.5,13.7) {$k$};
\node at (6.5,14.2) {$m$}; \node at (6.5,13.7) {$k$};
\node at (5,11) {$w = \frac{1-t}{1-t ab}$};
\draw[ultra thick,->] (9.5,11.5) -- (9.5,14.5);
\draw[ultra thick,->, dashed, draw=red,fill=red] (9.7,11.5) -- (9.7,14.5);
\draw[ultra thick,->, dashed, draw=red,fill=red] (8,13) -- (11,13);
\node at (8.5,12.5) {$m$}; \node at (8.5,12) {$k$};
\node at (10.5,12.5) {$m-1$}; \node at (10.5,12) {$k$};
\node at (8.5,14.2) {$m$}; \node at (8.5,13.7) {$k$};
\node at (10.5,14.2) {$m-1$}; \node at (10.5,13.7) {$k$};
\node at (9,11) {$w = \frac{t(1-ab)}{1-t ab}$};
\draw[ultra thick,->] (13.5,11.5) -- (13.5,13.0); \draw[ultra thick,->] (13.5,13) -- (15,13.0);
\draw[ultra thick,->, dashed, draw=red,fill=red] (13.7,11.5) -- (13.7,14.5);
\draw[ultra thick,->, dashed, draw=red,fill=red] (12,13.2) -- (15,13.2);
\node at (12.5,12.5) {$m$}; \node at (12.5,12) {$k$};
\node at (14.5,12.5) {$m-1$}; \node at (14.5,12) {$k$};
\node at (12.5,14.2) {$m$}; \node at (12.5,13.7) {$k+1$};
\node at (14.5,14.2) {$m$}; \node at (14.5,13.7) {$k$};
\node at (13,11) {$w = \frac{1-t}{1-t ab}$};
\draw[ultra thick,->, dashed, draw=red,fill=red] (21.5,11.5) -- (21.5,14.5);
\node at (20.5,12.5) {$m$}; \node at (20.5,12) {$k$};
\node at (22.5,12.5) {$m$}; \node at (22.5,12) {$k-1$};
\node at (20.5,14.2) {$m$}; \node at (20.5,13.7) {$k$};
\node at (22.5,14.2) {$m$}; \node at (22.5,13.7) {$k-1$};
\node at (21,11) {$w = \frac{t(1-ab)}{1-t ab}$};
\draw[ultra thick,->, dashed, draw=red,fill=red] (17.5,11.5) -- (17.5,13.0); \draw[ultra thick,->, dashed, draw=red,fill=red] (17.5,13) -- (19,13.0);
\node at (16.5,12.5) {$m$}; \node at (16.5,12) {$k$};
\node at (18.5,12.5) {$m$}; \node at (18.5,12) {$k-1$};
\node at (16.5,14.2) {$m$}; \node at (16.5,13.7) {$k$};
\node at (18.5,14.2) {$m$}; \node at (18.5,13.7) {$k$};
\node at (17,11) {$w = \frac{1-t}{1-t ab}$};
\draw[ultra thick,->] (1.5,16.5) -- (1.5,19.5); \draw[ultra thick,->] (0,18) -- (3,18);
\draw[ultra thick,->, dashed, draw=red,fill=red] (1.7,16.5) -- (1.7,19.5);
\node at (0.5,17.5) {$m$}; \node at (0.5,17) {$k$};
\node at (2.5,17.5) {$m-1$}; \node at (2.5,17) {$k$};
\node at (0.5,19.2) {$m+1$}; \node at (0.5,18.7) {$k-1$};
\node at (2.5,19.2) {$m$}; \node at (2.5,18.7) {$k-1$};
\node at (1,16) {$w = \frac{t(1-ab)}{1-t ab}$};
\draw[ultra thick,->] (5.5,16.5) -- (5.5,19.5); \draw[ultra thick,->] (4,18) -- (7,18);
\draw[ultra thick,->, dashed, draw=red,fill=red] (5.7,16.5) -- (5.7,18.2); \draw[ultra thick,->, dashed, draw=red,fill=red] (5.7,18.2) -- (7,18.2);
\node at (4.5,17.5) {$m$}; \node at (4.5,17) {$k$};
\node at (6.5,17.5) {$m-1$}; \node at (6.5,17) {$k$};
\node at (4.5,19.2) {$m+1$}; \node at (4.5,18.7) {$k-1$};
\node at (6.5,19.2) {$m$}; \node at (6.5,18.7) {$k$};
\node at (5,16) {$w = \frac{1-t}{1-t ab}$};
\draw[ultra thick,->] (9.5,16.5) -- (9.5,19.5);
\draw[ultra thick,->, dashed, draw=red,fill=red] (9.7,16.5) -- (9.7,19.5);
\node at (8.5,17.5) {$m$}; \node at (8.5,17) {$k$};
\node at (10.5,17.5) {$m-1$}; \node at (10.5,17) {$k$};
\node at (8.5,19.2) {$m$}; \node at (8.5,18.7) {$k$};
\node at (10.5,19.2) {$m-1$}; \node at (10.5,18.7) {$k$};
\node at (9,16) {$w = \frac{t(1-ab)}{1-t ab}$};
\draw[ultra thick,->] (13.5,16.5) -- (13.5,18); \draw[ultra thick,->] (13.5,18) -- (15,18);
\draw[ultra thick,->, dashed, draw=red,fill=red] (13.7,16.5) -- (13.7,18.2); \draw[ultra thick,->, dashed, draw=red,fill=red] (13.7,18.2) -- (15,18.2);
\node at (12.5,17.5) {$m$}; \node at (12.5,17) {$k$};
\node at (14.5,17.5) {$m-1$}; \node at (14.5,17) {$k$};
\node at (12.5,19.2) {$m$}; \node at (12.5,18.7) {$k$};
\node at (14.5,19.2) {$m$}; \node at (14.5,18.7) {$k$};
\node at (13,16) {$w = \frac{1-t}{1-t ab}$};
\draw[ultra thick,->] (17.5,16.5) -- (17.5,19.5);
\draw[ultra thick,->, dashed, draw=red,fill=red] (16,18) -- (19,18); 
\node at (16.5,17.5) {$m$}; \node at (16.5,17) {$k$};
\node at (18.5,17.5) {$m-1$}; \node at (18.5,17) {$k+1$};
\node at (16.5,19.2) {$m$}; \node at (16.5,18.7) {$k+1$};
\node at (18.5,19.2) {$m-1$}; \node at (18.5,18.7) {$k+2$};
\node at (17,16) {$w = \frac{t(1-ab)}{1-t ab}$};
\draw[ultra thick,->] (21.5,16.5) -- (21.5,18); \draw[ultra thick,->] (21.5,18) -- (23,18);
\draw[ultra thick,->, dashed, draw=red,fill=red] (20,18) -- (21.5,18); \draw[ultra thick,->, dashed, draw=red,fill=red] (21.5,18) -- (21.5,19.5);
\node at (20.5,17.5) {$m$}; \node at (20.5,17) {$k$};
\node at (22.5,17.5) {$m-1$}; \node at (22.5,17) {$k+1$};
\node at (20.5,19.2) {$m$}; \node at (20.5,18.7) {$k+1$};
\node at (22.5,19.2) {$m$}; \node at (22.5,18.7) {$k$};
\node at (21,16) {$w = \frac{1-t}{1-t ab}$};
\draw[ultra thick,->] (0,23) -- (3,23);
\draw[ultra thick,->, dashed, draw=red,fill=red] (1.5,21.5) -- (1.5,23.2); \draw[ultra thick,->, dashed, draw=red,fill=red] (1.5,23.2) -- (3,23.2);
\node at (0.5,22.5) {$m$}; \node at (0.5,22) {$k$};
\node at (2.5,22.5) {$m$}; \node at (2.5,22) {$k-1$};
\node at (0.5,24.2) {$m+1$}; \node at (0.5,23.7) {$k-1$};
\node at (2.5,24.2) {$m+1$}; \node at (2.5,23.7) {$k-1$};
\node at (1,21) {$w = \frac{1-ab}{1-t ab} \frac{1-t}{1-t^k}$};
\draw[ultra thick,->] (4,23) -- (7,23);
\draw[ultra thick,->, dashed, draw=red,fill=red] (5.5,21.5) -- (5.5,24.5);
\node at (4.5,22.5) {$m$}; \node at (4.5,22) {$k$};
\node at (6.5,22.5) {$m$}; \node at (6.5,22) {$k-1$};
\node at (4.5,24.2) {$m+1$}; \node at (4.5,23.7) {$k-1$};
\node at (6.5,24.2) {$m+1$}; \node at (6.5,23.7) {$k-2$};
\node at (5,21) {$w = \frac{1-ab}{1-t ab} \frac{t-t^k}{1-t^k}$};
\draw[ultra thick,->] (8,23) -- (9.5,23); \draw[ultra thick,->] (9.5,23) -- (9.5,24.5);
\draw[ultra thick,->, dashed, draw=red,fill=red] (9.5,21.5) -- (9.5,23); \draw[ultra thick,->, dashed, draw=red,fill=red] (9.5,23) -- (11,23);
\node at (8.5,22.5) {$m$}; \node at (8.5,22) {$k$};
\node at (10.5,22.5) {$m$}; \node at (10.5,22) {$k-1$};
\node at (8.5,24.2) {$m+1$}; \node at (8.5,23.7) {$k-1$};
\node at (10.5,24.2) {$m$}; \node at (10.5,23.7) {$k$};
\node at (9,21) {$w = \frac{(1-t)ab}{1-t ab}$};
\draw[ultra thick,->] (12,23) -- (15,23);
\draw[ultra thick,->, dashed, draw=red,fill=red] (12,23.2) -- (15,23.2);
\node at (12.5,22.5) {$m$}; \node at (12.5,22) {$k$};
\node at (14.5,22.5) {$m$}; \node at (14.5,22) {$k$};
\node at (12.5,24.2) {$m+1$}; \node at (12.5,23.7) {$k$};
\node at (14.5,24.2) {$m+1$}; \node at (14.5,23.7) {$k$};
\node at (13,21) {$w = \frac{1-ab}{1-t ab}$};
\draw[ultra thick,->] (16,23) -- (17.5,23);\draw[ultra thick,->] (17.5,23) -- (17.5,24.5);
\draw[ultra thick,->, dashed, draw=red,fill=red] (16,23.2) -- (19,23.2);
\node at (16.5,22.5) {$m$}; \node at (16.5,22) {$k$};
\node at (18.5,22.5) {$m$}; \node at (18.5,22) {$k$};
\node at (16.5,24.2) {$m+1$}; \node at (16.5,23.7) {$k$};
\node at (18.5,24.2) {$m+1$}; \node at (18.5,23.7) {$k+1$};
\node at (17,21) {$w = \frac{(1-t)ab(1-ab)}{1-t ab}$};
\draw[ultra thick,->] (20,23) -- (21.5,23);\draw[ultra thick,->] (21.5,23) -- (21.5,24.5);
\draw[ultra thick,->, dashed, draw=red,fill=red] (20,23.2) -- (21.7,23.2); \draw[ultra thick,->, dashed, draw=red,fill=red] (21.75,23.2) -- (21.7,24.5);
\node at (20.5,22.5) {$m$}; \node at (20.5,22) {$k$};
\node at (22.5,22.5) {$m$}; \node at (22.5,22) {$k$};
\node at (20.5,24.2) {$m+1$}; \node at (20.5,23.7) {$k$};
\node at (22.5,24.2) {$m$}; \node at (22.5,23.7) {$k$};
\node at (21,21) {$w = \frac{(1-t)a^2 b^2}{1-t ab}$};
\end{tikzpicture}

\caption{An arrow model for the Markov evolution of first two columns of HL-RSK field. $(d_0 (i,j), d_1 (i,j))$ is the top-right corner, the weight of each picture is in the bottom of it.  }
\label{fig:colored-2arrows}
\end{figure}
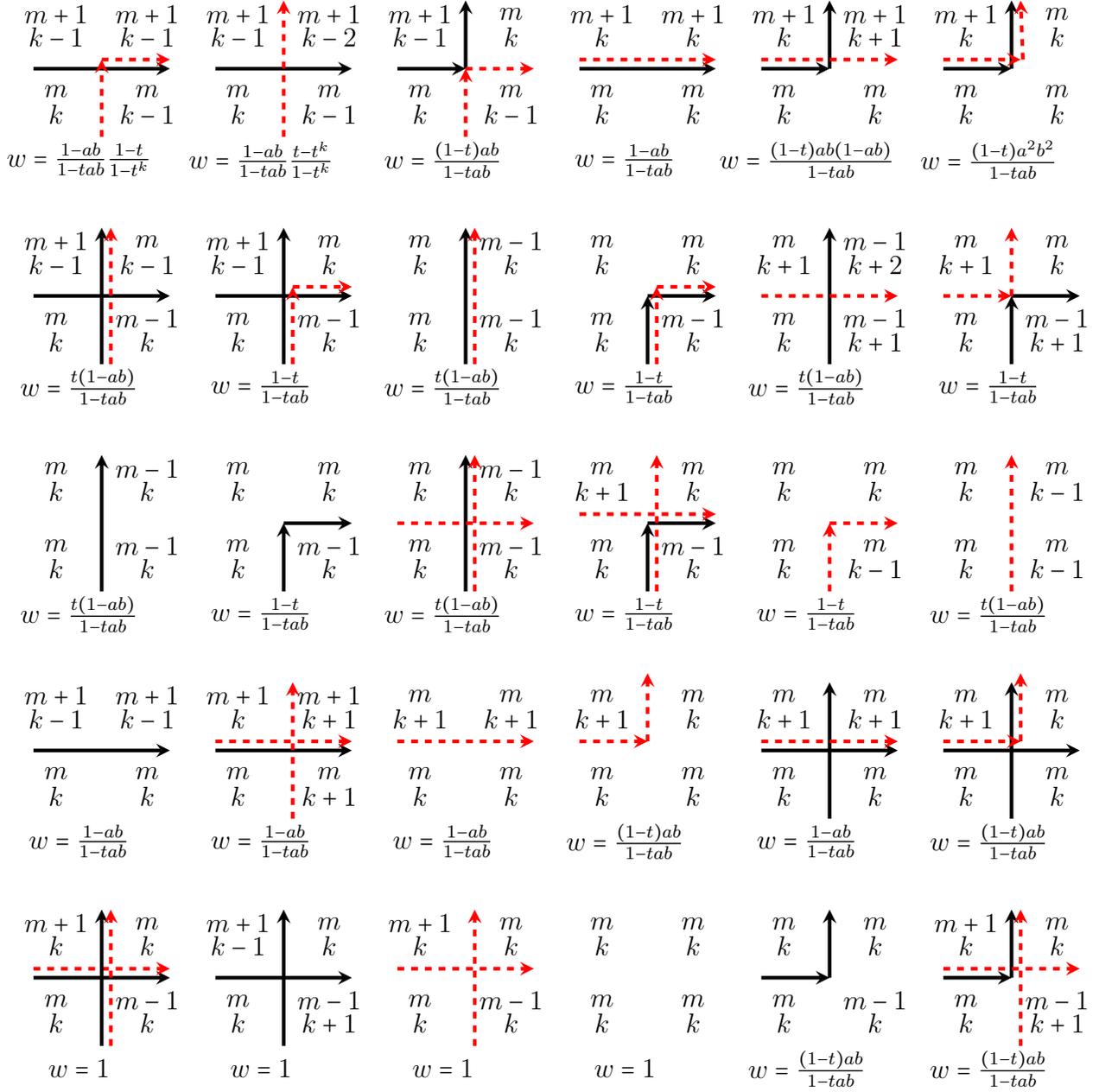

Analogously to the dynamics of the first column, we can now interpret the colored edges from Figure \ref{fig:colored-2arrows} as a two-layer stochastic vertex model. An important feature of this model is that some of the weights (in the top row of Figure \ref{fig:colored-2arrows}) depend on the value $d_1$. One may call this model \textit{integrable}, because the machinery of Hall-Littlewood processes provides a lot of observables for it.

In more detail, note that $h_0(m,n)$ is the number of vertical black arrows coming through segment $[(0;n); (m,n)]$, and note that $h_1 (m,n)$ is the number of vertical red arrows coming through segment $[(0;n); (m,n)]$. The study of these height functions is equivalent to the study of the configuration of arrows.
We have $h_0 (m,n) = d_0 (m,n) = n - \Lambda'_1 (m,n) $, and $h_1 (m,n) = d_0 (m,n) + d_1 (m,n) = n - \Lambda_1'(m,n) - \Lambda'_2 (m,n)$.
Theorem \ref{prop:gen-HL-proc} provides observables for any number of points on the down-right path in the quadrant. To make the expression simpler, let us write an explicit formula only in the case when points are on the same horizontal line.




\begin{proposition}
\label{prop:form-for-2verMod}
Fix an integer $k$, and consider an arbitrary sequence $(s(1), \dots, s(k))$, $s(i) \in \{0,1 \}$, $i=1, \dots, k$. Let $\{ z_{u;f} \}$, $u=1, \dots, k$, $f=1, \dots, s(u)+1$, (so $f$ takes either one or two possible values depending on $s(u)$) be formal variables.
For any integers $m_1 \ge m_2 \ge \dots \ge m_k$ and $n$ we have
\begin{multline*}
\mathbf E t^{ h_{s(1)} (m_1, n) + h_{s(2)} (m_2, n)+ \dots + h_{s(k)} (m_k,n)} = \left[ \prod_{u=1}^k \prod_{f=1}^{s(u)+1} z_{u;f}^{-1} \right] t^{n k} \prod_{1 \le i<j \le k} \prod_{f=1}^{s(i)+1} \prod_{\hat f=1}^{s(j)+1} \frac{1- z_{i;f}^{-1} z_{j;\hat f}}{1 - t^{-1} z_{i;f}^{-1} z_{j;\hat f}}
\\ \times \prod_{l=1}^k \left( \mathbf{1}_{s(l)=1} \frac{- (z_{l;1} - z_{l;2})^2}{2 z_{l;1}^2 z_{l;2}^2} + \mathbf{1}_{s(l)=0} \frac{1}{z_{l;1}} \right) \prod_{l=1}^k \left( \prod_{j=1}^n \prod_{f=1}^{s(l)+1} \frac{1+z_{l;f} b_j}{1 + t z_{l;f} b_j} \prod_{i=1}^{m_l} \prod_{f=1}^{s(l)+1} \frac{1+t^{-1} z_{l;f}^{-1} a_i}{1 + z_{l;f}^{-1} a_i} \right).
\end{multline*}
\end{proposition}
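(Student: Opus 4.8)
The plan is to reduce the statement to the observable formula of Theorem~\ref{prop:gen-HL-proc} by realizing the collection $\Lambda(m_1,n),\dots,\Lambda(m_k,n)$ as a single Hall-Littlewood process. First I would apply Proposition~\ref{prop:pathHLprocess} with $n_1=\dots=n_k=n$. The staircase path collapses (the repeated-height $\prec$ steps become trivial, corresponding to empty specializations), so the chain reduces to
\[
\varnothing \prec \Lambda(m_1,n) \succ \Lambda(m_2,n) \succ \dots \succ \Lambda(m_k,n) \succ (0,\dots,0),
\]
and $(\Lambda(m_1,n),\dots,\Lambda(m_k,n))$ is distributed as a Hall-Littlewood process. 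All the plus data is carried by the single ascent, $\rho_1^+=\rho(b_1,\dots,b_n)$, while the minus specializations are $\rho_l^-=\rho(a_{m_{l+1}+1},\dots,a_{m_l})$ with the convention $m_{k+1}=0$. Computing the marginal at position $l$ (union of the relevant specializations) shows it is the Hall-Littlewood measure in the minus variables $a_1,\dots,a_{m_l}$ and the plus variables $b_1,\dots,b_n$; this already accounts for why the final formula couples each $z_{l;f}$ to all of $b_1,\dots,b_n$ but only to $a_1,\dots,a_{m_l}$, and it explains why restricting to a common height $n$ is what makes the $b$-coupling uniform across points.

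Second, I would rewrite the averaged quantity in terms of column lengths. Using $h_0(m,n)=n-\Lambda_1'(m,n)$ and $h_1(m,n)=n-\Lambda_1'(m,n)-\Lambda_2'(m,n)$,
\[
t^{\sum_{l=1}^k h_{s(l)}(m_l,n)} = t^{nk}\prod_{l=1}^k t^{-\Lambda_1'(m_l,n)-s(l)\,\Lambda_2'(m_l,n)},
\]
and each factor $t^{-\Lambda_1'(m_l,n)-s(l)\Lambda_2'(m_l,n)}$ is exactly the column observable $t^{-(\lambda^l)'_1-\dots-(\lambda^l)'_{r_l}}$ for the process signature $\lambda^l=\Lambda(m_l,n)$, with $r_l:=s(l)+1\in\{1,2\}$. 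Hence $\mathbf E\,t^{\sum_l h_{s(l)}(m_l,n)}$ equals $t^{nk}$ times a product of precisely the observables that Theorem~\ref{prop:gen-HL-proc} evaluates.

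Third, I would apply Theorem~\ref{prop:gen-HL-proc} with $M=N=k$ and $r_m=s(m)+1$, read in its column-length interpretation (the one obtained, exactly as in the passage from the observable formula to the Hall-Littlewood measure formula, by transposing each $\lambda^m\mapsto(\lambda^m)'$ via the Macdonald involution $\omega$ and renaming $q\to t$). Setting $z_{l;f}=v_{f;l}$, the theorem's prefactor reduces term by term to $1/z_{l;1}$ when $r_l=1$ and to $-(z_{l;1}-z_{l;2})^2/(2z_{l;1}^2z_{l;2}^2)$ when $r_l=2$, which is the stated prefactor; the off-diagonal factors $\prod\frac{1-v_{i;\alpha}^{-1}v_{j;\beta}}{1-q^{-1}v_{i;\alpha}^{-1}v_{j;\beta}}$ become the displayed cross-term with $t^{-1}$; and substituting the finite alpha specializations of Step~1 turns the $x$- and $y$-couplings into the $b$- and $a$-products. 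Since all $a_ib_j<1$ and finite alpha specializations are Hall-Littlewood positive, the formal identity descends to a genuine expectation.

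The main obstacle is the bookkeeping in the third step. One must follow the involution $\omega$ and the substitution $q\to t$ carefully enough to be sure that the $q^{-\lambda_1-\dots}$ observable of Theorem~\ref{prop:gen-HL-proc} becomes the column-length observable $t^{-\lambda_1'-\dots}$, and one must check that distributing the single-variable specializations along the collapsed path reproduces the asymmetric coupling (all of $b_1,\dots,b_n$ for every point, but $a_1,\dots,a_{m_l}$ only for point $l$) together with the correct identification of the positions $\alpha<\beta$ with the ordering $m_1\ge\dots\ge m_k$. Once these identifications are pinned down, the remaining work is the routine simplification of the rational prefactors.
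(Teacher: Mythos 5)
Your proposal is correct and follows exactly the paper's route: the paper's own proof is the one-line remark that the formula is an immediate corollary of Theorem \ref{prop:gen-HL-proc}, leaving implicit the identification of $(\Lambda(m_1,n),\dots,\Lambda(m_k,n))$ as a Hall-Littlewood process via Proposition \ref{prop:pathHLprocess} and the rewriting of the height functions as column observables $t^{-\Lambda'_1-\dots-\Lambda'_{r_l}}$ with $r_l=s(l)+1$. Your three steps spell out precisely these implicit ingredients, including the asymmetric specialization structure (all of $b_1,\dots,b_n$ coupled to every point, but only $a_1,\dots,a_{m_l}$ to point $l$) and the matching of the $r_l\in\{1,2\}$ prefactors, so there is nothing to add.
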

\begin{proof}
This is an immediate corollary of Theorem \ref{prop:gen-HL-proc}.
\end{proof}

\subsection{Multi-layer stochastic vertex model}
\label{sec:multi-layer-vertMod}

Similarly with the previous section, one can consider the dynamics which appears as a restriction of the HL-RSK field to the first $r$ columns of Young diagrams. It is determined by numbers $(d_0 (m,n), \dots, d_{l-1} (m,n))$, where $d_i (m,n)$ is the number of particles in $\Lambda (m,n)$ which are equal to $i$. Analogously with the previous sections, one can interpret this dynamics as a stochastic multi-layer vertex model. We do not write the rules of this dynamics explicitly (of course, they are completely determined by HL-RSK algorithm from Section \ref{sec:1cDescr}). However, we (informally) remark that in the multi-layer case only ``neighboring'' arrows interact, and all possible interactions between arrows are present in the two-layer case.

Such a multi-colored model can be described in terms of the height functions $h_0 (m,n) = d_0 (m,n)$, $h_1 (m,n) = d_0 (m,n)+ d_1 (m,n)$, ..., $h_{l-1} (m,n) = d_0 (m,n)+d_1 (m,n) + \dots + d_{l-1} (m,n)$. Theorem \ref{prop:gen-HL-proc} gives an explicit formula for expressions of the form
$$
\mathbf E t^{ \sum_{i=1}^p \sum_{j=0}^{l-1} k_{i,j} h_{j} (m_i,n_i)}
$$
for any natural numbers $\{ k_{i,j} \}$ and $m_1 \ge m_2 \ge \dots \ge m_p$, $n_1 \le n_2 \le \dots \le n_p$. This gives all joint moments of the random vector $\{ t^{h_j (m_i,n_i)} \}_{i,j}$. Note that each coordinate of this vector takes values between 0 and 1, so in principle the observables provide enough information for completely determining the distributions of the height functions.

\subsection{ASEP}
\label{sec:asep}

There exists a limit transition from a stochastic six vertex model to the asymmetric simple exclusion process (ASEP), see \cite[Section 2.2]{BCG}. Using this limit transition, one can deduce Tracy-Widom formulas for ASEP and their multi-point generalisations through the machinery of Hall-Littlewood processes. Let us briefly describe the procedure.

A stochastic six vertex model can be interpreted as a particle configuration evolution. Namely, let us place a particle in $(x, N)$, $x \in \Z_{\ge 0} + 1/2$, $N \in \Z_{\ge 0}$, if $d_0 (x+1/2, N) = d_0 (x-1/2, N) - 1$. Equivalently, these are the places where vertical lines considered in Section \ref{sec:st6v} intersect with lines given by equations $y=N$. For a fixed $N$ the particles form a random configuration $\mathbf{X}(N) \subset \Z_{\ge 0} +1/2$, and with the growth of $N$ the configuration changes (for example, new particles appear).

For the limit we consider it will be convenient to make a shift of this configuration: Let us define $\mathbf{\tilde X}(N) := \mathbf{X}(N) - N$; that is, each particle is shifted to the left by $N$.

Consider the case of homogeneous equal parameters $a_i=a$, $b_j = a$, for all $i,j \ge 1$, and let $ab= a^2 = 1 - (1-t) \eps$ for $\eps \ge 0$. Note that if $\eps=0$, then $d_0 (i,N) = N-i$, for all $N=1,2,\dots$, $j=1,2,\dots, N$, because the local rules of growth always imply $d_0 (i,j) = d_0 (i+1,j+1)$. It is the height function of a particle configuration $\mathbf{\tilde X}(N)$ with particles at points $-1/2, -3/2, -5/2, \dots, -N+1/2$.

Consider now small $\eps >0$ and two neighboring levels $\{d_0 (i,N) \}_{i \ge 1}$, and $\{d_0 (i,N+1) \}_{i \ge 1}$. We are interested in transition probabilities between these two levels which are constant or linear in $\eps$. The only transition probability which does not depend on $\eps$ is again given by $d_0 (i,N) = d_0 (i+1,N+1)$ for all $i$; the transition probability equals $1 - o(1)$. It is readily verified that transition probabilities which are linear in $\eps$ have the following form: we have $d_0 (i,N) = d_0 (i+1,N+1)$ for all $i \ge 1$ except one. In the exceptional value $\hat i$ we can have the following cases:
\begin{itemize}
\item
$d_0 (\hat i+1,N+1) = d_0 ( \hat i, N)+1$, if we have $d_0 (\hat i -1, N) = d_0 ( \hat i, N)+1 = d_0 ( \hat i+1, N) +1$. In this case the transition probability has the form $\eps + o( \eps)$.

\item $d_0 (\hat i+1,N+1) = d_0 ( \hat i, N)-1$, if we have $d_0 (\hat i -1, N) = d_0 ( \hat i, N) = d_0 ( \hat i+1, N) +1$. In this case the transition probability has the form $t \eps + o( \eps)$.
\end{itemize}

For a fixed $\tau>0$ let us consider the particle configuration $\mathbf{ \tilde X} (N)$ in the limit $\eps \to 0$, $N = [ \tau \eps^{-1} ] \to \infty$. The resulting particle configuration $\mathbf{\hat X (\tau)}$ will consist of infinite amount of particles living on $\Z+1/2$. The initial condition becomes $\mathbf{ \hat X} (0) =\{-1/2, -3/2, -5/2, \dots \}$, which is called step initial condition. $\mathbf{\hat X (\tau)}$ is a continuous time dynamics; the jump rates are determined by linear in $\eps$ transition probabilities listed above. In terms of particles, they mean that a particle at $x$ jumps by 1 to the right with intensity $1$ if there is no particle at $x+1$, and a particle at $x$ jumps to the left with intensity $t$, if there is no particles at $x-1$. This is exactly the rules of ASEP dynamics (see \cite{A} for a more detailed explanation of this limit transition).

Theorem \ref{prop:gen-HL-proc} supplies the multi-point formulas for the observables of ASEP in distinct points for a fixed moment of time $\tau$. They are not new: Similar one-point formulas were first obtained in \cite{TW1}, \cite{TW2}, and subsequently in \cite{BCS}. Similar multi-point formulas were obtained in \cite[Section 10.1]{BP2}. In our setting these formulas appear from the general formalism of Hall-Littlewood processes.

\begin{proposition}
\label{prop:momAsep}
Consider the ASEP $\mathbf{\hat X (\tau)}$ on $\Z+1/2$ in which particles are initially placed in $-1/2, -3/2, -5/2, \dots$ (step initial condition); the rate of a particle jump to the right is $1$, the rate of a particle jump to the left is $t$. For $\tilde m \in \Z$ let $ h (\tilde m;\tau)$ be the number of particles to the right of $\tilde m$ at time $\tau$. We have
\begin{multline*}
t^{ k_1 h ( \tilde m_1) + k_2 h (\tilde m_2)} = [ z_1^{-1} \dots z_{k_1}^{-1} z_{k_1+1}^{-1} \dots z_{k_1 + k_2}^{-1}] \prod_{1 \le i < j \le k_1+k_2} \frac{ 1 - z_i^{-1} z_j}{1 - t^{-1} z_i^{-1} z_j} \\ \times \frac{1}{z_1 \dots z_{k_1} z_{k_1+1} \dots z_{k_1 + k_2}}
\prod_{r=1}^{k_1+k_2} \exp \left( \frac{ - \tau z_r (1-t)^2}{(1+z_r)(1+t z_r)} \right) \prod_{r=1}^{k_1} \left( \frac{z_r + t^{-1}}{z_r +1} \right)^{\tilde m_1} \prod_{r=k_1+1}^{k_1+k_2} \left( \frac{z_r + t^{-1}}{z_r +1} \right)^{\tilde m_2}.
\end{multline*}
\end{proposition}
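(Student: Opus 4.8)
The plan is to derive Proposition \ref{prop:momAsep} as the scaling limit of the two-point stochastic six vertex formula of Proposition \ref{prop:stoh6vertMom}, using the six vertex $\to$ ASEP degeneration recalled at the start of Section \ref{sec:asep}. First I would specialize Proposition \ref{prop:stoh6vertMom} to homogeneous parameters $a_i = b_j = a$ with $a^2 = 1 - (1-t)\eps$, and to the geometry dictated by the particle picture of Section \ref{sec:asep}: since the shifted configuration satisfies $h(\tilde m;\tau) = \lim_{\eps \to 0} d_0(\tilde m + N, N)$ with $N = [\tau\eps^{-1}]$, I take $n_1 = n_2 = N$ and $m_1 = \tilde m_1 + N$, $m_2 = \tilde m_2 + N$ (assuming $\tilde m_1 \ge \tilde m_2$, as forced by the down-right constraint $m_1 \ge m_2$, $n_1 \le n_2$ of Proposition \ref{prop:stoh6vertMom}). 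After this substitution the final product $\prod_{r=k_1+1}^{k_2}\prod_{j=n_1+1}^{n_2}$ is empty, the factor $\prod_{i=m_2+1}^{m_1}$ supplies the difference $\tilde m_1 - \tilde m_2$ for $r \le k_1$, and the cross term $\prod_{i<j}\frac{1 - z_i^{-1}z_j}{1 - t^{-1}z_i^{-1}z_j}$ together with the prefactor $\frac{1}{z_1\cdots z_{k_1+k_2}}$ are untouched by the scaling and already match the claimed formula.

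Next I would analyze the $\eps \to 0$ limit factor by factor, grouping all pieces attached to a fixed variable $z_r$. Separating the part surviving as a power $(\cdots)^{\tilde m}$ from the part scaling with $N$, one finds for each $r$ that $\left(\frac{1 + t^{-1}z_r^{-1}a}{1 + z_r^{-1}a}\right)^{\tilde m_l} \to \left(\frac{z_r + t^{-1}}{z_r + 1}\right)^{\tilde m_l}$ as $a \to 1$ (with $\tilde m_l = \tilde m_1$ for $r \le k_1$ and $\tilde m_l = \tilde m_2$ otherwise), while the remaining $N$-th power is that of
\[
g_r(a) := t \cdot \frac{1 + z_r a}{1 + t z_r a} \cdot \frac{1 + t^{-1}z_r^{-1}a}{1 + z_r^{-1}a}.
\]
Here the prefactor $t^{n_1 k_1 + n_2 k_2} = \prod_r t^N$ contributes exactly the factor $t$ that makes $g_r(1) = 1$; this bookkeeping is the algebraic heart of the computation. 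Writing $1 - a \sim \tfrac12(1-t)\eps$ and $N \sim \tau\eps^{-1}$, a first-order expansion gives $N\log g_r(a) \to g_r'(1)\cdot(-\tfrac12\tau(1-t))$, and the routine derivative $g_r'(1) = \frac{2 z_r(1-t)}{(1+z_r)(1+t z_r)}$ yields $g_r(a)^N \to \exp\!\left(\frac{-\tau z_r (1-t)^2}{(1+z_r)(1+t z_r)}\right)$, precisely the exponential in the statement.

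The step I expect to demand the most care is justifying that both limits — the expectation on the left and the coefficient extraction on the right — may be taken inside. On the left, $t^{k_1 d_0(m_1,N) + k_2 d_0(m_2,N)}$ takes values in $[0,1]$, and the six vertex height functions converge jointly in distribution to the ASEP height functions $h(\tilde m_1;\tau), h(\tilde m_2;\tau)$ under the degeneration of Section \ref{sec:asep} (see \cite{BCG}, \cite{A}); bounded convergence then gives convergence of the expectations to $\mathbf E\, t^{k_1 h(\tilde m_1;\tau) + k_2 h(\tilde m_2;\tau)}$. On the right, the functional $[z_1^{-1}\cdots z_{k_1+k_2}^{-1}]$ is a fixed finite-dimensional coefficient extraction, realized via the expansion conventions of Section \ref{sec:2} as iterated residues on contours inherited from Proposition \ref{prop:stoh6vertMom}. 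Every factor of the integrand except $g_r(a)^N$ is rational in the $z_r$ with a definite limit, and $g_r(a)^N$ converges uniformly on a fixed circle about the origin (where $g_r(a)$ is analytic with value $1$, the poles at $-a$ and $-1/(ta)$ staying bounded away as $a\to 1$). Hence the extracted coefficient converges to the coefficient of the limiting integrand, and matching this with the claimed expression completes the proof.
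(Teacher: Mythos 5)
Your proposal is correct and follows essentially the same route as the paper: specialize the two-point six vertex formula of Proposition \ref{prop:stoh6vertMom} to homogeneous parameters $a_i=b_j=a$ with $a^2=1-(1-t)\eps$, set $n_1=n_2=N=\lfloor\tau\eps^{-1}\rfloor$, $m_l=N+\tilde m_l$, and take the per-factor limit $\bigl(t\,\tfrac{1+z_ra}{1+tz_ra}\,\tfrac{1+t^{-1}z_r^{-1}a}{1+z_r^{-1}a}\bigr)^N\to\exp\bigl(\tfrac{-\tau z_r(1-t)^2}{(1+z_r)(1+tz_r)}\bigr)$. If anything, your bookkeeping of the prefactor $t^{n_1k_1+n_2k_2}$ and your justification of exchanging the $\eps\to0$ limit with the expectation and the coefficient extraction are more explicit than the paper's one-line computation.
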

\begin{proof}
Consider points $(m_1,N)$, $(m_2,N)$ in a stochastic six vertex model. In the limit transition to ASEP: $a_i=a$, $b_j = a$, for all $i,j \ge 1$, $ab= a^2 = 1 - (1-t) \eps$, consider $N = \lfloor \tau \eps^{-1} \rfloor$, $m_1 = m_1(N) = N+ \tilde m_1$, $m_2 = m_2(N) = N+ \tilde m_2$, where $\tilde m_1 \ge \tilde m_2$ are fixed integers. Let us apply Proposition \ref{prop:stoh6vertMom} to observables in points $(m_1,N)$, $(m_2,N)$ in a stochastic six vertex model, and make $\eps \to 0$ limit. Using
$$
\frac{1+b z}{1+t b z} \frac{1 +t^{-1} z^{-1} a}{ 1 + z^{-1} a} = \left( 1 - \frac{\eps z (1-t)^2}{(1+z) (1+tz)} + o(\eps) \right)^{\lfloor \tau / \eps \rfloor} \to \exp \left ( \frac{ - \tau z (1-t)^2}{(1+z)(1+tz)} \right),
$$
we arrive at the claimed formula.
\end{proof}

\subsection{Two-layer ASEP}
\label{sec:2asep}

In this section we introduce a new two-layer extension of ASEP; we call it the \textit{two-layer ASEP}.

It is obtained by the limit transition from Section \ref{sec:asep} in the two-layer stochastic vertex model introduced in Section \ref{sec:2vertMod}. The resulting object will be the dynamics of particle configurations on $\Z+1/2$. However, now the particles will be of two different types.

We will use notation from Section \ref{sec:2vertMod}. Let us place a black particle in $(x, N)$, $x \in \Z_{\ge 0} + 1/2$, $N \in \Z_{\ge 0}$, if $d_0 (x+1/2, N) = d_0 (x-1/2, N) - 1$, and let us place a red particle in $(x, N)$ if $h_1 (x+1/2, N) = h_1 (x-1/2, N) - 1$. One can have two particles of different colors in the same site, but not of the same color. For a fixed $N$ the particles form random configurations $\mathbf{X}(N) \subset \Z_{\ge 0} +1/2$, $\mathbf{Y}(N) \subset \Z_{\ge 0} +1/2$, and with the growth of $N$ the configuration changes (for example, new particles appear). Let us recall that $\mathbf{\tilde X}(N) = \mathbf{X}(N) - N$, and, analogously, let us set $\mathbf{\tilde Y}(N) := \mathbf{Y}(N) - N$.

Let us consider the case of homogeneous equal parameters $a_i=a$, $b_j = b = a$, for all $i,j \ge 1$, $ab= a^2 = 1 - (1-t) \eps$, and the limit $\eps \to 0$, as in Section \ref{sec:asep}. Again, we are interested in constant and linear in $\eps$ terms of transition probabilities between $\{ d_0 (i,N), d_1 (i,N) \}_{i \ge 1}$ and $\{ d_0 (i,N+1), d_1 (i,N+1) \}_{i \ge 1}$. Using Figure \ref{fig:colored-2arrows}, we see that the probability of $d_0 (i,N) = d_0 (i+1,N+1), d_1 (i,N) = d_1 (i+1, N+1)$ for all $i \ge 1$, has the form $1- o(1)$ --- indeed, this happens in all cases when the weight in Figure \ref{fig:colored-2arrows} does not tend to 0 (that is, does not contain the factor $(1-ab)$). Linear in $\eps$ terms are given by $d_0 (i,N) = d_0 (i+1,N+1), d_1 (i,N) = d_1 (i+1, N+1)$ for all $i \ge 1$ except one value $\hat i$, and the values for these terms come from all pictures from Figure \ref{fig:colored-2arrows} which contain the factor $(1-ab)$.

For a fixed $\tau>0$ let us consider the particle configuration $\mathbf{ \tilde X} (N)$ in the limit $\eps \to 0$, $N = [ \tau \eps^{-1} ] \to \infty$. The resulting particle configurations $\mathbf{\hat X (\tau)}$, $\mathbf{\hat Y (\tau)}$ will consist of infinite amount of particles living on $\Z+1/2$. The initial condition becomes $\mathbf{ \hat X} (0) = \mathbf{ \hat Y} (0) =\{-1/2, -3/2, -5/2, \dots \}$. The process $(\mathbf{\hat X (\tau)}, \mathbf{\hat Y (\tau)})$ is a continuous time dynamics. For $\tilde m \in \Z$ let $h_0 (\tilde m; \tau)$ be the number of black particles to the right of $\tilde m$ at time $\tau$, and let $h_1 (\tilde m; \tau)$ be the number of red particles to the right of $\tilde m$ at time $\tau$. At each moment of time particles placed in two neighboring vertices might be transformed into another configuration in these two vertices with certain jump rates. The jump rates are determined by linear in $\eps$ terms. Note that these terms depend on parameter $k = k (\tilde m) = h_1 (\tilde m; \tau) - h_0 (\tilde m; \tau)$, which affects the jumps between $\tilde m-1/2$ and $\tilde m + 1/2$. Doing a case by case analysis, we arrive at expressions depicted in Figure \ref{fig:2levAsepp} (which coincides with the description given in Introduction).

Note that if $k(\tilde m)=1$, then the jump rate of the form $(t-t^k)/ (1-t^k)$ equals 0. This means that if we number black and, separately, red particles from right to left, then the black particle number $k$ cannot be to the right of the red particle number $k$.

\begin{figure}
\begin{tikzpicture}[>=stealth,scale=0.6]
\draw[black,fill=black] (1,1) circle (0.2);
\draw[black] (1.8,1) circle (0.2);
\node at (2.5,1) {$\mapsto$};
\draw[black] (3,1) circle (0.2);
\draw[black,fill=black] (3.8,1) circle (0.2);
\node at (2.5,-0.3) {$v=1$};
\draw[black,fill=black] (6,1) circle (0.2); \fill[draw=red, thick, pattern=north west lines, pattern color=red] (6,0.5) circle (0.2);
\fill[draw=red, thick, pattern=north west lines, pattern color=red] (6.8,0.5) circle (0.2);
\node at (7.5,1) {$\mapsto$};
\fill[draw=red, thick, pattern=north west lines, pattern color=red] (8,0.5) circle (0.2);
\draw[black,fill=black] (8.8,1) circle (0.2); \fill[draw=red, thick, pattern=north west lines, pattern color=red] (8.8,0.5) circle (0.2);
\node at (7.5,-0.3) {$v=1$};
\fill[draw=red, thick, pattern=north west lines, pattern color=red] (11,1) circle (0.2);
\draw[black] (11.8,1) circle (0.2);
\node at (12.5,1) {$\mapsto$};
\draw[black] (13,1) circle (0.2);
\fill[draw=red, thick, pattern=north west lines, pattern color=red] (13.8,1) circle (0.2);
\node at (12.5,-0.3) {$v=1$};
\draw[black,fill=black] (16,1) circle (0.2); \fill[draw=red, thick, pattern=north west lines, pattern color=red] (16,0.5) circle (0.2);
\draw[black,fill=black] (16.8,1) circle (0.2);
\node at (17.5,1) {$\mapsto$};
\draw[black,fill=black] (18,1) circle (0.2);
\draw[black,fill=black] (18.8,1) circle (0.2); \fill[draw=red, thick, pattern=north west lines, pattern color=red] (18.8,0.5) circle (0.2);
\node at (17.5,-0.3) {$v=1$};
\draw[black] (21,1) circle (0.2);
\draw[black, fill=black] (21.8,1) circle (0.2);
\node at (22.5,1) {$\mapsto$};
\draw[black, fill=black] (23,1) circle (0.2);
\draw[black] (23.8,1) circle (0.2);
\node at (22.5,-0.3) {$v=t$};
\draw[black,fill=black] (1.8,4) circle (0.2); \fill[draw=red, thick, pattern=north west lines, pattern color=red] (1,3.5) circle (0.2);
\fill[draw=red, thick, pattern=north west lines, pattern color=red] (1.8,3.5) circle (0.2);
\node at (2.5,4) {$\mapsto$};
\fill[draw=red, thick, pattern=north west lines, pattern color=red] (3,3.5) circle (0.2);
\draw[black,fill=black] (3,4) circle (0.2); \fill[draw=red, thick, pattern=north west lines, pattern color=red] (3.8,3.5) circle (0.2);
\node at (2.5,2.7) {$v=t$};
\fill[draw=red, thick, pattern=north west lines, pattern color=red] (6.8,4) circle (0.2);
\draw[black] (6,4) circle (0.2);
\node at (7.5,4) {$\mapsto$};
\draw[black] (8.8,4) circle (0.2);
\fill[draw=red, thick, pattern=north west lines, pattern color=red] (8,4) circle (0.2);
\node at (7.5,2.7) {$v=t$};
\draw[black,fill=black] (11,4) circle (0.2); \fill[draw=red, thick, pattern=north west lines, pattern color=red] (11,3.5) circle (0.2);
\draw[black,fill=black] (11.8,4) circle (0.2);
\node at (12.5,4) {$\mapsto$};
\draw[black,fill=black] (13,4) circle (0.2);
\draw[black,fill=black] (13.8,4) circle (0.2); \fill[draw=red, thick, pattern=north west lines, pattern color=red] (13.8,3.5) circle (0.2);
\node at (12.5,2.7) {$v=t$};
\draw[black,fill=black] (16.8,4) circle (0.2);
\fill[draw=red, thick, pattern=north west lines, pattern color=red] (16,4) circle (0.2);
\node at (17.5,4) {$\mapsto$};
\fill[draw=red, thick, pattern=north west lines, pattern color=red] (18.8,4) circle (0.2);
\draw[black,fill=black] (18,4) circle (0.2);
\node at (17.5,2.7) {$v=t$};
\draw[black,fill=black] (21.8,4) circle (0.2); \fill[draw=red, thick, pattern=north west lines, pattern color=red] (21.8,3.5) circle (0.2);
\draw[black] (21,4) circle (0.2);
\node at (22.5,4) {$\mapsto$};
\draw[black] (23.8,4) circle (0.2);
\draw[black,fill=black] (23,4) circle (0.2); \fill[draw=red, thick, pattern=north west lines, pattern color=red] (23,3.5) circle (0.2);
\node at (22.5,2.7) {$v=t$};
\draw[black,fill=black] (1,7) circle (0.2); \fill[draw=red, thick, pattern=north west lines, pattern color=red] (1,6.5) circle (0.2);
\draw[black] (1.8,7) circle (0.2);
\node at (2.5,7) {$\mapsto$};
\draw[black] (3,7) circle (0.2);
\draw[black,fill=black] (3.8,7) circle (0.2); \fill[draw=red, thick, pattern=north west lines, pattern color=red] (3.8,6.5) circle (0.2);
\node at (2.5,5.7) {$v=1$};
\fill[draw=red, thick, pattern=north west lines, pattern color=red] (6,6.5) circle (0.2);
\draw[black,fill=black] (6,7) circle (0.2); \draw[black] (6.8,7) circle (0.2);
\node at (7.5,7) {$\mapsto$};
\fill[draw=red, thick, pattern=north west lines, pattern color=red] (8.8,7) circle (0.2);
\draw[black,fill=black] (8,7) circle (0.2);
\node at (7.5,5.7) {$v=1-t$};
\draw[black,fill=black] (11,7) circle (0.2); \fill[draw=red, thick, pattern=north west lines, pattern color=red] (11.8,7) circle (0.2);
\node at (12.5,7) {$\mapsto$};
\draw[black,fill=black] (13.8,7) circle (0.2); \fill[draw=red, thick, pattern=north west lines, pattern color=red] (13.8,6.5) circle (0.2);
\draw[black] (13,7) circle (0.2);
\node at (12.5,5.7) {$v=\frac{1-t}{1-t^k}$};
\fill[draw=red, thick, pattern=north west lines, pattern color=red] (16.8,7) circle (0.2);
\draw[black,fill=black] (16,7) circle (0.2);
\node at (17.5,7) {$\mapsto$};
\draw[black,fill=black] (18.8,7) circle (0.2);
\fill[draw=red, thick, pattern=north west lines, pattern color=red] (18,7) circle (0.2);
\node at (17.5,5.7) {$v=\frac{t-t^k}{1-t^k}$};
\end{tikzpicture}
\caption{Jump rates of the two-layer ASEP. An empty circle denotes an empty site, while filled black and shaded red circles denote black and red particles. $k = k(\tilde m)$ is a parameter of the configuration at this point. }
\label{fig:2levAsepp}
\end{figure}
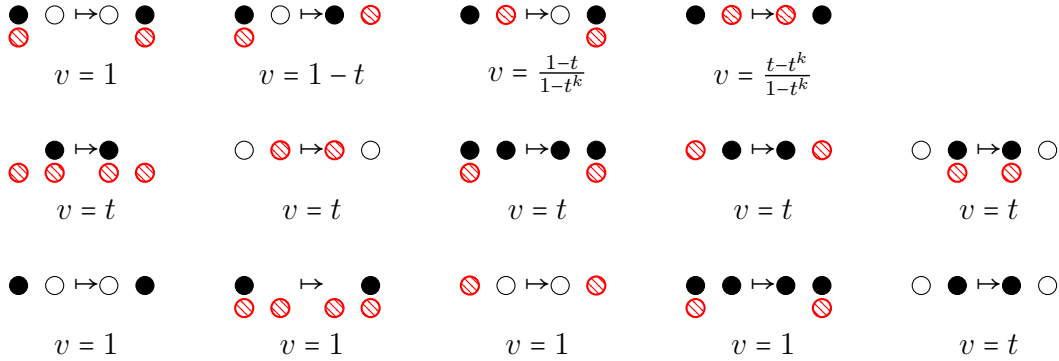

As we have already seen, the black particles evolve in a Markovian way as ASEP. The evolution of red particles is more complicated: It depends on the evolution of black ones.

The two-layer ASEP is an integrable system in the sense that we have formulas for observables which contain (at least, in principle) all information about the distribution of the model. A general formula for observables is given in Proposition \ref{prop:2ASEPform2} in the introduction. Proposition \ref{prop:2ASEPform2} is obtained from Theorem \ref{prop:gen-HL-proc} analogously to Proposition \ref{prop:momAsep}.



\subsection{A multi-layer ASEP}
\label{sec:multi-asep}

One can similarly consider the ASEP-like limit of the multi-layer stochastic vertex model from Section \ref{sec:multi-layer-vertMod}. The resulting continuous dynamics will consist of particles of many types; we do not attempt to describe explicitly the rules of their evolution in the current paper (though they are completely determined by the HL-RSK field).

As a conjecture, the formulas of Theorem \ref{prop:gen-HL-proc} should be sufficient for proving that the one-point fluctuations of the multi-layer ASEP converge to solutions of the multi-layer stochastic heat equation introduced in \cite{OCW}.

\end{document}